\newcolumntype{C}{>{$}c<{$}} 
\definecolor{uququq}{rgb}{0.25,0.25,0.25}
\newtheorem{thm}{Theorem}[section]
\newtheorem{cor}[thm]{Corollary}
\newtheorem{lem}[thm]{Lemma}
\newtheorem{prop}[thm]{Proposition}
\theoremstyle{definition}
\newtheorem{defn}[thm]{Definition}
\theoremstyle{definition}
\newtheorem{rem}[thm]{Remark}
\theoremstyle{definition}
\newtheorem{ex}[thm]{Example}
\newcommand{\Z}{\mathbb{Z}}
\newcommand{\D}{\mathbb{D}}
\newcommand{\F}{\mathbb{F}}
\def\H{\mathrm{H}}
\def\D{\mathrm{D}}
\begin{document}

\title{Heffter Spaces}

\author{Marco Buratti}
\address{SBAI - Sapienza Universit\`a di Roma, Via Antonio Scarpa 16, I-00161 Roma, Italy}
\email{marco.buratti@uniroma1.it}

\author{Anita Pasotti}
\address{DICATAM - Sez. Matematica, Universit\`a degli Studi di Brescia, Via
Branze 43, I-25123 Brescia, Italy}
\email{anita.pasotti@unibs.it}
%
%

\keywords{Heffter system; Heffter array; partial linear space; configuration; resolvability; net; additive design; difference packing; cyclotomy;
orthogonal cycle systems.}
\subjclass[2010]{}

\maketitle

\begin{abstract}
The notion of a Heffter array, which received much attention in the last decade, is
equivalent to a pair of orthogonal Heffter systems.
In this paper we study the existence problem of a set of $r$ mutually orthogonal
Heffter systems for any $r$.
Such a set is equivalent to a resolvable partial linear space of degree $r$ whose parallel classes are  Heffter systems:
this is a new combinatorial design that we call a {\it Heffter space}.
We present a series of direct constructions of Heffter spaces with odd block size and
arbitrarily large degree $r$ obtained with the crucial use of finite fields.
Among the applications we establish, in particular, that if
$q=2kw+1$ is a prime power with $kw$ odd and $k\geq 3$, then
there are at least $\lceil{w\over4k^4}\rceil$ mutually orthogonal $k$-cycle systems of order $q$.
\end{abstract}

\section{Introduction}
We introduce {\it Heffter spaces}, a new combinatorial design generalizing the well-known notion of a {\it Heffter array}.
Whereas a Heffter array gives a pair of orthogonal Heffter systems, more generally a {\it Heffter space of degree $r$} gives
a set of $r$ mutually orthogonal Heffter systems.
We first need to recall some basic definitions and give some new ones.

The {\it patterned starter} of a finite group $G$ of odd order is the set of all pairs $\{g,-g\}$ with $g\in G\setminus\{0\}$.
Any complete system of representatives for the patterned starter of $G$ is said to be a {\it half-set} of $G$.

 \begin{defn}\label{HeffterSystem}
 Let $V$ be a half-set of an abelian group $G$ of order $2v+1\geq7$. A $(v,k)$ {\it Heffter system}
 on $V$ is a partition $\mathcal P$ of $V$ into zero-sum parts, called {\it blocks}, of size $k$. The size $v$ of $V$ will be called
 the {\it order} of $\mathcal P$. By saying that a Heffter system is {\it over a group $G$} we mean that it is on an unspecified
 half-set of $G$.
 \end{defn}

If $G=\Z_{2v+1}$, in the classic literature  one usually speaks of a $\D(2v+1, k)$, see \cite{A}.
We have changed the notation in order to be consistent with the one about Heffter spaces that we will introduce later on.
It could be helpful to keep in mind that if $\mathcal P$ is a Heffter system on $V$, then $V$ is certainly zero-sum since it is
partitioned by its blocks which are, by definition, zero-sum.

\begin{defn}\label{simple}
Throughout the paper, speaking of a {\it simple} $k$-subset $B$ of an additive group $G$ we tacitly assume that $B$ is an {\it ordered}
$k$-subset $\{b_0,b_1,\dots,b_{k-1}\}$ of $G$ such that the $k$-sequence of its partial sums $(c_0,c_1,\dots,c_{k-1})$ defined by
$\displaystyle c_i=\sum_{j=0}^i b_j$ for $0\leq i\leq k-1$, does not have repeated elements.

Speaking of a {\it simple} family ${\mathcal F}$ of subsets of $G$ we mean that every member of $\mathcal F$
is simple in the sense said above; thus it is assumed that every $B\in{\mathcal F}$ has been assigned an order
satisfying the above property.

In particular, a $(v,k)$  Heffter system ${\mathcal P}=\{B_1,\dots,B_n\}$ will be called simple
if every $B_i$ has been assigned an order for which its partial sums are all distinct.
\end{defn}

It has been conjectured that the blocks of {\it any} $(v,k)$ Heffter system over $\Z_{2v+1}$
can be suitably ordered to get a simple Heffter system. The conjecture is trivially true for $k\in\{3,4,5\}$.
It is also true for $6\leq k\leq10$ (see \cite{AL,CMPPSums}).

The existence of a simple $(v,k)$ Heffter system over $\Z_{2v+1}$ is implicitly assured for any admissible $v$, namely for any $v\geq3$ divisible by $k$,
by the existence of a cyclic $k$-cycle decomposition of $K_{2v+1}$ established independently in \cite{BGL,BurDel}.
Indeed, as it will be recalled in detail in the last section, such a decomposition is equivalent to a
simple $(v,k)$ Heffter system.



\begin{ex}\label{ex:D204}
Here is a half-set of $\Z_{41}$ $$V=\{1, 2, 3, 4, 5, 6, 7, 9, 10, 11, 13, 15, 19, 21, 23, 24, 25, 27, 29, 33\}$$
and here are three $(20,4)$ Heffter systems on $V$
$${\mathcal P}_1=\{\{1,3,4,33\}, \{2,5,13,21\}, \{6,23,24,29\},\{7,9,10,15\},\{11,19,25,27\}\};$$
$${\mathcal P}_2=\{\{1,2,9,29\}, \{3,6,13,19\}, \{4,5,7,25\},\{10,21,24,27\},\{11,15,23,33\}\};$$
$${\mathcal P}_3=\{\{1,6,7,27\}, \{2,4,11,24\}, \{3,5,10,23\}, \{9,19,21,33\},\{13,15,25,29\}\}.$$
\end{ex}

\begin{defn}
Two Heffter systems ${\mathcal P}$ and ${\mathcal Q}$ on the same half-set are {\it orthogonal} if every block of $\mathcal P$
intersects every block of $\mathcal Q$  in at most one element, see \cite{A}.
Speaking of a $(v,\{k_1,\dots,k_r\})$-MOHS we mean a set $\{{\mathcal P}_1, \dots, {\mathcal P}_r\}$ of mutually orthogonal Heffter systems
of order $v$ where the blocks of ${\mathcal P}_i$ have size $k_i$ for $1\leq i\leq r$. Sometimes the multiset of block sizes will be denoted in
exponential notation $\{k_1^{m_1},\dots,k_s^{m_s}\}$ but we will write $(v,k;r)$-MOHS rather than $(v,\{k^r\})$-MOHS.
\end{defn}

\begin{ex}\label{ex:D414bis}
One can check that the three $(20,4)$ Heffter systems ${\mathcal P}_1$, ${\mathcal P}_2$,  ${\mathcal P}_3$ of Example \ref{ex:D204} are
mutually orthogonal, i.e., $\{{\mathcal P}_1,{\mathcal P}_2,{\mathcal P}_3\}$ is a $(20,4;3)$-MOHS.
\end{ex}
Let $\{{\mathcal P},{\mathcal Q}\}$ be a $(v,\{h,k\})$-MOHS, i.e., a pair of orthogonal Heffter systems of order $v$ where the blocks of
${\mathcal P}$ and ${\mathcal Q}$ have sizes $h$ and $k$, respectively. Set
${\mathcal P}=\{B_1,\dots,B_m\}$ and ${\mathcal Q}=\{B'_1,\dots,B'_n\}$
so that we have $m={v\over h}$ and $n={v\over k}$. Then $\{{\mathcal P},{\mathcal Q}\}$ can be efficiently
displayed in an $m\times n$ partially filled matrix $A$ whose cell $a_{i,j}$ is empty if $B_i$ and $B'_j$ are disjoint or contains their
common element otherwise. Thus $A$ satisfies the following conditions:
\begin{itemize}
  \item[{\rm (a)}] each row has exactly $h$ filled cells;
    \item[{\rm (b)}] each column has exactly $k$ filled cells;
\item[{\rm (c)}]  the entries of the filled cells form a half-set of an abelian group $G$;
\item[{\rm (d)}] every row and every column is zero-sum in $G$.
\end{itemize}


An $m\times n$ matrix $A$ with the above properties is said to be a {\it Heffter array} $\H(m,n;h,k)$.
Conversely, it is easy to see that any $\H(m,n;h,k)$ gives rise to a $(nk,\{h,k\})$-MOHS.
A {\it square} Heffter array is an $\H(n,n;k,k)$ and is denoted by H$(n;k)$.

\begin{ex}\label{ex54}
The following three $\H(5;4)$ display the mutual orthogonality between the three Heffter systems in Example \ref{ex:D204}.
$$
\begin{array}{|r|r|r|r|r|} \hline
1 & 3 & 4 &  & 33 \\ \hline
2 & 13 & 5 & 21 & \\ \hline
29 & 6 & & 24 & 23\\ \hline
9 &  & 7 & 10 & 15\\ \hline
 & 19 & 25 & 27 & 11\\ \hline
\end{array}
\quad\quad\quad\quad\quad\quad
\begin{array}{|r|r|r|r|r|} \hline
1 & 4 & 3 & 33 &  \\ \hline
 & 2 & 5 & 21 & 13 \\ \hline
6 & 24 & 23 &  & 29\\ \hline
7 &  & 10 & 9 & 15\\ \hline
 27 & 11 &  & 19 & 25\\ \hline
\end{array}
\quad\quad\quad\quad\quad\quad
\begin{array}{|r|r|r|r|r|} \hline
1 & 2 &  & 9 & 29 \\ \hline
6 &  & 3 & 19 & 13 \\ \hline
7 & 4 & 5 &  & 25\\ \hline
27 & 24 & 10 & 21 & \\ \hline
 & 11 & 23 & 33 & 15\\ \hline
\end{array}
$$
\end{ex}

Heffter arrays have been introduced in 2015 by Archdeacon \cite{A} who established in this way
an interesting link between combinatorial designs and topological graph theory.
After that, Heffter arrays have been studied by many authors. We refer to \cite{DP}
for an extensive survey on their results. Here, we just recall that the existence problem for the square
Heffter arrays has been completely solved.
\begin{thm}\label{thm:Heffter}{\rm \cite{ADDY,CDDY,DW}}
There exists an $\H(n; k)$ if and only if $n \geq k \geq  3$.

\end{thm}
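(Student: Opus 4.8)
The plan is to dispose of the necessity quickly and then to build the arrays, organising the construction according to the residue of $nk$ modulo $4$. For \textbf{necessity}: in an $\H(n;k)$ each of the $n$ rows has exactly $k$ filled cells among its $n$ cells, so $k\le n$; and a zero-sum block of size $1$ would be $\{0\}$, impossible since $0$ is never in a half-set, while a zero-sum block of size $2$ would be $\{g,-g\}$, which cannot be contained in a half-set. Hence $n\ge k\ge 3$ is forced, and all the work is in the converse.

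For \textbf{sufficiency} the first point is a dichotomy. Working in $G=\Z_{2nk+1}$, if the underlying half-set is $\{1,2,\dots,nk\}$ (an \emph{integer} Heffter array), then in each zero-sum row the sum of the $k$ absolute values must be even, so summing over all rows forces $1+2+\cdots+nk=\binom{nk+1}{2}$ to be even, i.e.\ $nk\equiv 0,3\pmod 4$. I would therefore split into two regimes: when $nk\equiv 0,3\pmod 4$, construct an integer $\H(n;k)$, with entries from $\{\pm1,\dots,\pm nk\}$, one per pair, having zero row and column sums over $\Z$; when $nk\equiv 1,2\pmod 4$, no integer array exists, so one must instead exhibit a suitable half-set of $\Z_{2nk+1}$ different from $\{1,\dots,nk\}$ and realise the array over it.

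In both regimes the method I would follow has two stages: fix the \emph{support} (the set of filled cells) and then fill it. A convenient support is the cyclically shifted diagonal band $\{(i,j):\ j-i\bmod n\in\{0,1,\dots,k-1\}\}$, which places exactly $k$ filled cells in every row and every column and carries an obvious cyclic symmetry to exploit. One then fills the band so that each row and each column is zero-sum, the standard device being to place a few small blocks that are themselves row- and column-balanced (for instance built from signed magic rectangles, or, for $k=3$, from Skolem/Langford-type sequences) periodically along the band, together with a parity-correcting adjustment confined to a bounded region near the start of the diagonal. To reach every $n$ for a fixed $k$ one also uses juxtaposition/inflation constructions that glue a previously built $\H(n';k)$ to an easily produced periodic strip. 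In the non-integer regime the same band method yields a ``near-integer'' array whose entries differ from $\{1,\dots,nk\}$ only in a controlled set of positions, the substitutions being chosen to repair \emph{simultaneously} the zero-sum condition and the half-set condition modulo $2nk+1$.

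The main obstacle is not any single construction but the size of the case analysis together with the boundary parameters. Dense arrays---small $n-k$, and especially the tight case $k=n$ with no empty cells---leave almost no freedom and need bespoke constructions; odd $k$ behaves differently from even $k$ because symbols cannot be paired off as $x,-x$ within a block; and the non-integer case is the most delicate since the half-set itself must be redesigned. A finite list of small exceptional pairs $(n,k)$ then has to be finished off by explicit examples or a short computer search, and verifying that the periodic fillings use every element of the half-set exactly once across all the residue classes is the bookkeeping-heavy core of the arguments in \cite{ADDY,CDDY,DW}.
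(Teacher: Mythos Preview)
The paper does not prove this theorem at all: it is quoted as a known result with the citations \cite{ADDY,CDDY,DW} and no argument is given. So there is no ``paper's own proof'' to compare your attempt against.

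Your necessity argument is correct and is the standard one. Your sufficiency sketch is a fair high-level summary of the strategy actually carried out across the three cited papers: the split into the integer case $nk\equiv 0,3\pmod 4$ (handled in \cite{ADDY,DW}) and the non-integer case $nk\equiv 1,2\pmod 4$ (handled in \cite{CDDY}), the use of a cyclic diagonal support, and the reliance on explicit small cases. But it remains a sketch: you explicitly defer the real work (``the bookkeeping-heavy core of the arguments in \cite{ADDY,CDDY,DW}''), and none of the periodic fillings, parity corrections, or exceptional constructions is actually produced. As a self-contained proof this would not stand; as an annotation of what the cited references do, it is broadly accurate.
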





\medskip
Let us open a parenthesis to recall some definitions from classic design theory \cite{BJL,CD}.

A {\it partial linear space} (PLS) is a pair $(V,{\mathcal B})$ where $V$ is a set of {\it points} and $\mathcal B$
is a set of non-empty subsets ({\it blocks} or {\it lines}) of $V$ with the property that any 2-subset of $V$ is contained in at most one block.
Two distinct points are said to be {\it collinear} if there is a block containing them.

A PLS where every two distinct points are contained in {\it exactly} one block is said to be a {\it linear space}.
In particular, a linear space with $v$ points in which every block has size $k$ is a {\it Steiner $2$-design} S$(2,k,v)$.

The {\it degree} of a point of a PLS is the number of blocks containing that point. A PLS has degree $r$ if all its points have the same degree $r$.
A PLS with $v$ points, constant block size $k$ and degree $r$ has necessarily $b={vr\over k}$ blocks and it is said to be a {\it configuration}.
It is often referred to as a $(v_r,b_k)$ {\it configuration} but here we prefer to write $(v,k;r)$ rather than $(v_r,b_k)$ in order to be consistent with
some other notation that will be used later. A configuration is said to be {\it symmetric} when the block size $k$ is equal to the degree $r$
(and consequently the numbers of points and blocks are equal).

A  {\it parallel class} of a PLS is a set of blocks partitioning the point set.
A PLS is said to be  {\it resolvable} if there exists a partition of the block set ({\it resolution}) into parallel classes.
Clearly, every resolvable PLS has degree equal to the number of its parallel classes.
By a {\it resolved} PLS we mean a resolvable PLS together with a {\it specific} resolution of it.
In particular, given an integer $v$ and a multiset of integers $\{k_1,\dots,k_r\}$, we will write $(v,\{k_1,\dots,k_r\})$-RPLS to denote a
resolved PLS whose resolution consists of $r$ parallel classes ${\mathcal P}_1, \dots, {\mathcal P}_r$ where all the blocks of $\mathcal P_i$ have size $k_i$
for $1\leq i\leq r$.

A resolvable configuration where the number of points is the square of the block size is said to be a {\it net}.
A net with block size $k$ and degree $r$ is often referred to as a $(r,k)$-{\it net} but here rather than to adopt this notation we prefer to speak of a $(k^2,k;r)$ net.

There is an extensive literature on resolvable linear spaces, especially on resolvable Steiner 2-designs. We point out, in particular,
that the existence of a resolvable S$(2,k,v)$ was proved in \cite{RW}
for $v$ admissible and sufficiently large.
For some literature on resolvable configurations we refer to \cite{BS21,BS22,G,S}.


\medskip
Let us return to ``the world of Heffter".
It is quite evident that a H$(m,n;h,k)$ gives rise to a resolvable PLS of degree 2 whose parallel classes are the
set of rows and the set of columns, respectively. This observation suggests to introduce the following new concept.
\begin{defn}\label{HS}
A {\it Heffter space} over an abelian group $G$
is a resolved PLS whose parallel classes are Heffter systems on a half-set of $G$.
Speaking of a $(v,\{k_1,\dots,k_r\})$ Heffter space we will mean a Heffter space with $r$ parallel classes
${\mathcal P}_1$, \dots, ${\mathcal P}_r$ where ${\mathcal P}_i$ is a $(v,k_i)$ Heffter system
for $1\leq i\leq r$. If $k_i=k$ for each $i$, it is clearly a $(v,k;r)$ configuration so that
we will speak of a $(v,k;r)$ {\it Heffter configuration}.
In the more special case  that $k_i=k$ for each $i$ and $v=k^2$, it is a net so that we
will speak of a $(k^2,k;r)$ {\it Heffter net}.
A Heffter space over $G$ will be said {\it cyclic} or {\it elementary abelian} if the group $G$ has the respective property.
\end{defn}

Note that a $(v,\{k_1,\dots,k_r\})$ Heffter space is equivalent to a $(v,\{k_1,\dots,k_r\})$-RPLS
whose points can be bijectively labeled with the elements of a half-set of an abelian group
in such a way that the sum of the labels in each block is zero. Such a labeling will be called a {\it Heffter labeling}.

According to Definition \ref{HS}, a $(v,k)$ Heffter system is nothing but a $(v,k;1)$ Heffter space or,  if you want,
a $(v,k;1)$ Heffter configuration. Also, a H$(m,n;h,k)$ is essentially the same as a $(nk,\{h,k\})$ Heffter space.
In particular, a H$(n;k)$ is essentially the same as a $(kn,k;2)$ Heffter configuration.

The aim of this paper is to find sets of mutually orthogonal Heffter systems as large as possible.
Heffter spaces are the crucial tool to investigate this problem in view of the following.

\begin{prop}\label{MOHS=HS}
A $(v,\{k_1,\dots,k_r\})$-MOHS is equivalent to a $(v,\{k_1,\dots,k_r\})$ Heffter space.
Thus, in particular, a $(v,k;r)$-MOHS is equivalent to a $(v,k;r)$ Heffter configuration.
\end{prop}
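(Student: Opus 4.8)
\emph{Proof plan.} The plan is simply to unwind Definitions~\ref{HeffterSystem} and~\ref{HS} and to observe that the partial-linear-space axiom translates verbatim into the orthogonality condition for Heffter systems; the equivalence will then be exhibited by two mutually inverse passages.

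First, starting from a $(v,\{k_1,\dots,k_r\})$-MOHS $\{\mathcal P_1,\dots,\mathcal P_r\}$ on a half-set $V$ of an abelian group $G$, I would take $V$ as point set and ${\mathcal B}=\mathcal P_1\cup\cdots\cup\mathcal P_r$ as block set, equipped with the resolution $\{\mathcal P_1,\dots,\mathcal P_r\}$. The only thing to verify is that $(V,{\mathcal B})$ is a PLS, i.e.\ that no $2$-subset $\{x,y\}$ of $V$ lies in two distinct blocks. If two such blocks both belonged to the same $\mathcal P_i$ they would coincide, since the blocks of a partition are determined by any one of their points; if they belonged to distinct $\mathcal P_i$ and $\mathcal P_j$, their intersection would contain $\{x,y\}$, contradicting the orthogonality of $\mathcal P_i$ and $\mathcal P_j$. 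Along the way one also notes that the $r$ families $\mathcal P_i$ are pairwise disjoint as sets of blocks: every block, being a zero-sum part of a half-set, has size at least $3$ (a zero-sum singleton would be $\{0\}\not\subseteq V$, and a zero-sum pair would be of the form $\{g,-g\}\not\subseteq V$), so a block lying in two different $\mathcal P_i$'s would again violate orthogonality; hence ${\mathcal B}$ is genuinely resolved by the $\mathcal P_i$. Thus we obtain a resolved PLS whose $i$-th parallel class is the $(v,k_i)$ Heffter system $\mathcal P_i$, that is, a $(v,\{k_1,\dots,k_r\})$ Heffter space in the sense of Definition~\ref{HS}.

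Conversely, from a $(v,\{k_1,\dots,k_r\})$ Heffter space with resolution $\mathcal P_1,\dots,\mathcal P_r$, each $\mathcal P_i$ is by definition a $(v,k_i)$ Heffter system on the common point set $V$, a half-set of $G$, so it remains only to prove that the $\mathcal P_i$ are mutually orthogonal. Fixing $i\neq j$, $B\in\mathcal P_i$ and $B'\in\mathcal P_j$, these are distinct blocks, because in a resolved PLS the parallel classes partition the block set; if $|B\cap B'|\geq 2$, then some $2$-subset of $V$ would be contained in the two distinct blocks $B$ and $B'$, contradicting the PLS axiom. Hence $|B\cap B'|\leq 1$, so $\mathcal P_i$ and $\mathcal P_j$ are orthogonal and $\{\mathcal P_1,\dots,\mathcal P_r\}$ is a $(v,\{k_1,\dots,k_r\})$-MOHS. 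The two passages are visibly inverse to each other, which gives the asserted equivalence; the case of a $(v,k;r)$-MOHS versus a $(v,k;r)$ Heffter configuration is the instance $k_1=\cdots=k_r=k$.

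I do not expect a genuine obstacle here: the statement is a dictionary between two languages for the same object. The only point needing a moment's care is ruling out that a single $2$-subset of $V$ could be ``collinear in two ways'', which is exactly what orthogonality forbids, together with the harmless observation that every Heffter block has size at least $3$.
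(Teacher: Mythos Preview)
Your proof is correct and follows essentially the same route as the paper: form the block set as the union of the $\mathcal P_i$, verify the PLS condition by the obvious case split (same class vs.\ different classes), and for the converse read the orthogonality off the PLS axiom. You are slightly more careful than the paper in one place---you explicitly argue that the $\mathcal P_i$ are pairwise disjoint as families of blocks (via $k_i\geq 3$), so that $\{\mathcal P_1,\dots,\mathcal P_r\}$ is genuinely a partition of $\mathcal B$---a point the paper leaves implicit.
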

\begin{proof}
Let ${\mathcal R}=\{{\mathcal P}_1$, \dots, ${\mathcal P}_r\}$ be a $(v,\{k_1,\dots,k_r\})$-MOHS. Thus there is a half-set $V$ of an abelian group of order $2v+1$
such that ${\mathcal P}_i$ is a $(v,k_i)$ Heffter system on $V$ for $1\leq i\leq r$.
Consider the triple ${\mathcal S}=(V,{\mathcal B},{\mathcal R})$ where ${\mathcal B}=\bigcup_{i=1}^r{\mathcal P}_i$.
Given two distinct members $B$, $B'$ of ${\mathcal B}$, we have $B\in {\mathcal P}_i$ and $B'\in {\mathcal P}_j$ for suitable indices $i$ and $j$.
For $i=j$, $B$ and $B'$ are disjoint since they are distinct blocks of the same Heffter system whereas for $i\neq j$ we have $|B\cap B'|\leq1$ since
${\mathcal P}_i$ and ${\mathcal P}_j$ are orthogonal. This means that ${\mathcal S}$ is a PLS.
Also, by the definition of a Heffter system, each ${\mathcal P}_i$ is a partition of $V$ so that ${\mathcal R}$ is a resolution of ${\mathcal S}$.
We conclude that ${\mathcal S}$ is a $(v,\{k_1,\dots,k_r\})$ Heffter space.

Conversely, let ${\mathcal S}=(V,{\mathcal B},{\mathcal R})$ be a $(v,\{k_1,\dots,k_r\})$ Heffter space with
${\mathcal R}=\{{\mathcal P}_1$, \dots, ${\mathcal P}_r\}$ so that,
by definition, $\mathcal P_i$ is a $(v,k_i)$ Heffter system for each $i$. Any two distinct blocks
of ${\mathcal S}$ have at most one common point since ${\mathcal S}$ is a PLS.
Hence, in particular, for $i\neq j$, any block of ${\mathcal P}_i$ shares at most
one point with each block of ${\mathcal P}_j$, i.e., ${\mathcal P}_i$ and ${\mathcal P}_j$ are orthogonal. Thus ${\mathcal R}$ is a $(v,\{k_1,\dots,k_r\})$-MOHS.
\end{proof}

Let us determine a trivial necessary condition for the existence of a $(v,\{k_1,\dots,k_r\})$-MOHS.
\begin{prop}\label{collinear}
If there exists a $(v,\{k_1,\dots,k_r\})$-MOHS, then $k_1+{\dots}+k_r-r\leq v-1$ and equality holds if and only if
the associated Heffter space is linear.
\end{prop}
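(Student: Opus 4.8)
The plan is to prove the inequality by a local count at a single point, and then read off the equality case from the same count. Let $\mathcal{S}=(V,\mathcal{B},\mathcal{R})$ be the Heffter space associated with the given MOHS via Proposition \ref{MOHS=HS}, where $\mathcal{R}=\{\mathcal{P}_1,\dots,\mathcal{P}_r\}$, the class $\mathcal{P}_i$ consists of blocks of size $k_i$, and $|V|=v$. First I would fix an arbitrary point $x\in V$. Since each $\mathcal{P}_i$ is a Heffter system, hence a partition of $V$, there is a unique block $B_i\in\mathcal{P}_i$ with $x\in B_i$, and $|B_i|=k_i$. Note that $B_i\neq B_j$ for $i\neq j$: a common block of $\mathcal{P}_i$ and $\mathcal{P}_j$ would meet itself in $k_i\geq 3>1$ points, contradicting orthogonality (equivalently, the PLS axiom).

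Next I would invoke the partial linear space property: for $i\neq j$ the blocks $B_i$ and $B_j$ are distinct and both contain $x$, so they can share no further point, i.e. $B_i\cap B_j=\{x\}$. Hence the $r$ sets $B_i\setminus\{x\}$, of sizes $k_i-1$, are pairwise disjoint subsets of $V\setminus\{x\}$, and therefore
$$\sum_{i=1}^r (k_i-1)=\Bigl|\bigcup_{i=1}^r (B_i\setminus\{x\})\Bigr|\le |V\setminus\{x\}|=v-1,$$
which is exactly the claimed bound $k_1+\dots+k_r-r\le v-1$.

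For the equality statement, equality in the display holds precisely when $\bigcup_{i=1}^r(B_i\setminus\{x\})=V\setminus\{x\}$, that is, when every point other than $x$ is collinear with $x$ (necessarily through one of $B_1,\dots,B_r$). Since the left-hand side $\sum(k_i-1)$ does not depend on the chosen point, equality as a numerical statement forces this to hold for \emph{every} $x\in V$, which is precisely the assertion that any two distinct points of $\mathcal{S}$ are collinear; as $\mathcal{S}$ is already a PLS, this says $\mathcal{S}$ is a linear space. Conversely, if $\mathcal{S}$ is linear, then for each $x$ every other point lies on a block through $x$, that block is one of the $B_i$, the union covers $V\setminus\{x\}$, and equality holds. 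I do not expect a genuine obstacle here; the only subtlety worth stating explicitly is that the inequality one derives is independent of the base point, so that numerical equality automatically propagates to all points and yields global linearity rather than a mere local condition.
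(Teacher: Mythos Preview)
Your proof is correct and follows essentially the same approach as the paper: fix a point $x$, take the unique block $B_i$ through $x$ in each parallel class, observe that the sets $B_i\setminus\{x\}$ are pairwise disjoint by the PLS axiom, and compare their total size with $|V\setminus\{x\}|$. Your argument is in fact slightly more careful than the paper's, since you explicitly justify why $B_i\neq B_j$ for $i\neq j$ and why numerical equality at one point propagates to all points.
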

\begin{proof}
Let ${\mathcal R}=\{{\mathcal P}_1$, \dots, ${\mathcal P}_r\}$ be a $(v,\{k_1,\dots,k_r\})$-MOHS and let
${\mathcal S}=(V,{\mathcal B},{\mathcal R})$ be its associated Heffter space.
Take any point $x\in V$ and, for $1\leq i\leq r$, let $B_i$ be the only block of ${\mathcal P}_i$ containing $x$.
The set of points which are collinear with $x$ is $\bigcup_{i=1}^r(B_i\setminus\{x\})$ which has size $\sum_{i=1}^r(k_i-1)=k_1+{\dots}+k_r-r$.
The first part of the statement follows since, obviously, the number of points collinear with $x$ is at most $v-1$.
The second part also follows since to say that $\mathcal S$ is linear is equivalent to
say that every point $x\in V$ is collinear with any other point of $V$, i.e., that the number of points collinear with $x$ is
precisely $v-1$.
\end{proof}
Applying the above in the case that $k_i=k$ for each $i$ we get the following.
\begin{cor}\label{collinear2}
If there exists a $(v,k;r)$-MOHS, then $r(k-1)\leq v-1$ and the equality holds if and only if
the associated Heffter space is a S$(2,k,v)$.
\end{cor}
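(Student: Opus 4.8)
The plan is to derive this statement as the immediate specialization of Proposition~\ref{collinear} to the case of constant block size. First I would apply Proposition~\ref{collinear} with $k_1=\cdots=k_r=k$. Then $k_1+\cdots+k_r-r = rk-r = r(k-1)$, so the necessary condition furnished by that proposition becomes exactly $r(k-1)\le v-1$, which is the first assertion.

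For the equality case, Proposition~\ref{collinear} already tells us that $r(k-1)=v-1$ holds if and only if the associated Heffter space $\mathcal S$ (which exists by Proposition~\ref{MOHS=HS}) is linear. So it only remains to identify what ``linear'' means here. Since all block sizes are equal to $k$, the space $\mathcal S$ is a $(v,k;r)$ Heffter configuration, hence in particular a PLS on $v$ points in which every block has size $k$. By the definitions recalled in the parenthesis on classic design theory, such a PLS is linear precisely when every $2$-subset of the point set lies in exactly one block, i.e.\ precisely when $\mathcal S$ is a Steiner $2$-design S$(2,k,v)$. Chaining the two equivalences yields the second assertion.

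I do not anticipate any genuine obstacle: the content beyond Proposition~\ref{collinear} is only the elementary observation that a linear space on $v$ points with constant block size $k$ is, by definition, an S$(2,k,v)$. If desired, one could append the remark that in the equality case the resulting S$(2,k,v)$ automatically carries the resolution ${\mathcal P}_1,\dots,{\mathcal P}_r$ inherited from the Heffter space, so it is in fact a resolvable (indeed Heffter-labeled) Steiner system; but this is not needed for the statement.
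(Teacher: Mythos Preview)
Your proposal is correct and follows exactly the paper's own approach: the paper simply says that the corollary is obtained by applying Proposition~\ref{collinear} with $k_i=k$ for each $i$, and you have spelled out precisely that specialization together with the (definitional) identification of a linear space with constant block size $k$ as an S$(2,k,v)$.
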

This immediately gives an upper bound on the maximum number of mutually orthogonal $(v,k)$ Heffter systems.
\begin{cor}\label{N(v,k)}
The maximum number of mutually orthogonal $(v,k)$ Heffter systems cannot exceed $\bigl{\lfloor}{v-1\over k-1}\bigl{\rfloor}.$
\end{cor}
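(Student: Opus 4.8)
The plan is to deduce this directly from Corollary~\ref{collinear2}, which is the specialization of Proposition~\ref{collinear} to constant block size. Suppose we are given a set of $r$ mutually orthogonal $(v,k)$ Heffter systems, i.e., a $(v,k;r)$-MOHS. Then Corollary~\ref{collinear2} yields the inequality $r(k-1)\leq v-1$. Since $k\geq 3$ (indeed $k\geq 2$ suffices here, as Heffter systems have order $v\geq 3$ and block size at least a meaningful value), we have $k-1>0$, so we may divide to obtain $r\leq\dfrac{v-1}{k-1}$.

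The final step is simply to observe that $r$ is a positive integer, so the bound $r\leq\dfrac{v-1}{k-1}$ forces $r\leq\bigl\lfloor\dfrac{v-1}{k-1}\bigr\rfloor$. Taking the supremum over all admissible $r$ gives the claimed bound on the maximum number of mutually orthogonal $(v,k)$ Heffter systems. There is no real obstacle here: the entire content is already packaged in Corollary~\ref{collinear2}, whose proof in turn rests on the elementary point-counting argument of Proposition~\ref{collinear} (a fixed point $x$ is collinear with exactly $r(k-1)$ other points, and this cannot exceed the $v-1$ remaining points). The only thing worth stating explicitly is the passage from the rational bound to its floor, which uses the integrality of $r$.
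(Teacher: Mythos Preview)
Your proof is correct and matches the paper's approach exactly: the paper simply states that Corollary~\ref{N(v,k)} follows immediately from Corollary~\ref{collinear2}, and your argument spells out precisely that implication (divide $r(k-1)\leq v-1$ by $k-1>0$ and use the integrality of $r$).
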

Reaching the upper bound $\bigl{\lfloor}{v-1\over k-1}\bigl{\rfloor}$ seems to us a really exceptional occurrence. For instance, when $v-1$ is
divisible by $k-1$ this bound is precisely ${v-1\over k-1}$ and by Corollary \ref{collinear2}
the Heffter space associated with a $(v,k;{v-1\over k-1})$-MOHS would be a Heffter S$(2,k,v)$.
Yet, we are tempted to conjecture that a Heffter {\it linear}  space cannot exist. Indeed it should satisfy some necessary conditions that
we will establish in the next section and that appear to us extremely demanding.

Thus it is natural to say that a $(v,\{k_1,\dots,k_r\})$ Heffter space ${\mathcal S}=(V,{\mathcal B},{\mathcal R})$
is more interesting the closer it is to a linear space.
A good parameter to measure this ``distance" is the density ${|E|\over {|V|\choose2}}$ of the collinear graph of
$\mathcal S$, that is the graph $Col({\mathcal S})=(V,E)$
with vertex set $V$ where two vertices are adjacent if and only if they are collinear.
By the proof of Proposition \ref{collinear}, $Col({\mathcal S})$ is regular of degree $k_1+...+k_r-r$,
hence its density is equal to  ${k_1+...+k_r-r\over v-1}$.
It is maximum and equal to 1 precisely when $Col({\mathcal S})$ is complete and ${\mathcal S}$ is linear.
In case of a small density $Col({\mathcal S})$ is {\it sparse} and ${\mathcal S}$ is far from being linear.

Speaking of the density of a Heffter space $\mathcal S$ or its correspondent set of MOHSs we will mean
the density of the collinear graph $Col(\mathcal S)$.

\medskip
The construction for Heffter spaces of degree greater than 2 appears to us difficult even without the request of a
high density. In this paper we present some constructions for Heffter spaces with odd block sizes and an arbitrarily large degree
with the crucial help of finite fields. In most of the constructions we take advantage of the elementary fact that the set $\F_q^\Box$
of non-zero-squares of a field $\F_q$ of order $q\equiv3$ (mod 4) is a half-set of its additive group which, with an abuse of notation, will be also denoted by $\F_q$.

For now, getting Heffter spaces with even block sizes appears to us much more difficult. Some constructions will be presented
in a future paper \cite{BP3} where we will use a completely different approach which is not based on finite fields.

Our main results on MOHSs derive from the direct constructions for Heffter spaces obtained throughout the paper.
They can be summarized as follows.

A construction for Heffter spaces obtained in Section 3 using the notion of a {\it partial partition of a group} allows us to state
the following.
\begin{thm}\label{k1...krMOHS}
 If $k_1, \dots, k_r$ are mutually coprime
odd integers such that $2k_1\cdot{\dots}\cdot k_r+1$ is a prime power $q$, then there exists a $({q-1\over2},\{k_1,\dots,k_r\})$-MOHS.
\end{thm}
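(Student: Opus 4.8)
\emph{Proof proposal.} The plan is to produce all $r$ Heffter systems simultaneously as coset partitions of nested subgroups of the multiplicative group of $\F_q$, using the elementary fact that a nontrivial proper subgroup of $\F_q^*$ — and hence any additive translate of it — sums to $0$.

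Write $K=k_1\cdots k_r$, so that $q=2K+1$ and $(q-1)/2=K$; we may assume each $k_i\geq 3$, since a factor equal to $1$ would force a block of size $1$, which cannot be zero-sum. As the $k_i$ are odd, $K$ is odd, whence $q\equiv 3\pmod 4$ and, as recalled in the Introduction, $V:=\F_q^\Box$ is a half-set of the additive group $\F_q$ of size $K=v$. Fix a primitive root $g$ of $\F_q$, so that $\F_q^*=\langle g\rangle$ is cyclic of order $2K$ and $V=\langle g^2\rangle$ is its cyclic subgroup of order $K$. For $1\leq i\leq r$ set $m_i=K/k_i=\prod_{j\neq i}k_j$ and let $H_i=\langle g^{2m_i}\rangle$ be the unique subgroup of $V$ of order $k_i$.

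The first step is to check that every coset $cH_i$ (with $c\in V$) is zero-sum in $\F_q$: if $h$ generates $H_i$ then $\sum_{x\in cH_i}x=c\sum_{t=0}^{k_i-1}h^t=c\,\dfrac{h^{k_i}-1}{h-1}=0$, because $h^{k_i}=1$ and $h\neq 1$. Consequently the coset partition $\mathcal P_i=\{cH_i:\ c\in V/H_i\}$ is a partition of $V$ into $m_i$ zero-sum blocks of size $k_i$, i.e., a $(K,k_i)$ Heffter system on $V$. The second step is orthogonality, and this is exactly where the coprimality hypothesis is used: for $i\neq j$ and cosets $cH_i$, $dH_j$, if their intersection is nonempty we may pick $z$ in it, so that $cH_i\cap dH_j=z(H_i\cap H_j)$; but in the cyclic group $V$ the subgroup $H_i\cap H_j$ has order $\gcd(k_i,k_j)=1$, hence is trivial, and therefore $|cH_i\cap dH_j|\leq 1$. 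Thus $\mathcal P_i$ and $\mathcal P_j$ are orthogonal, $\{\mathcal P_1,\dots,\mathcal P_r\}$ is a $(K,\{k_1,\dots,k_r\})$-MOHS, and by Proposition \ref{MOHS=HS} this is equivalent to the desired $\bigl(\tfrac{q-1}{2},\{k_1,\dots,k_r\}\bigr)$ Heffter space.

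I do not anticipate a genuine obstacle here: the whole argument rests on two routine facts — that $\F_q^\Box$ is a half-set of $\F_q$ when $q\equiv 3\pmod 4$ (already granted in the Introduction) and that in a cyclic group the subgroups of coprime orders intersect trivially while the cosets of each subgroup partition the group. The only computation requiring a moment's care is the zero-sum check for the cosets, which is the finite geometric-series identity above; everything else is bookkeeping on the indices $m_i$.
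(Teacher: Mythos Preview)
Your proof is correct and follows essentially the same route as the paper: the paper proves Theorem~\ref{k1...krMOHS} via Theorem~\ref{planica}, taking the subgroups $S_i\leq\F_q^\Box$ of order $k_i$, using that their coset partitions form parallel classes of a RPLS (Proposition~\ref{Andre}, which is exactly your $H_i\cap H_j=\{1\}$ argument), and that each coset is zero-sum. You unwind these lemmas in place --- the geometric-series computation replaces the citation of ``Fact~1 in \cite{BN1}'' --- but the construction and the key observations are identical.
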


The above gives, for any integer $r$, infinite values of $v$ for which there exists a set of $r$ mutually orthogonal Heffter systems
of order $v$. On the other hand the block sizes of these systems are pairwise distinct whereas, especially for the applications,
it would be better to have the same result with a constant block size. This result, which is one of the strongest in this paper,
is a consequence of a construction obtained in Section 4 using {\it difference packings} and cyclotomy.

\begin{thm}\label{>8k^5r/k}
There exists a $({q-1\over2},k;r)$-MOHS for every triple $(q,k,r)$ such that $k\geq3$ is odd and $q\equiv2k+1$ $($mod $4k)$ is a prime power greater
than $8k^5\lceil{r\over k}\rceil$.
\end{thm}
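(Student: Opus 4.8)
The plan is to realise each of the $r$ Heffter systems as the multiplicative development of a suitable ``base block'' by one fixed cyclic subgroup of $\F_q^{*}$, to use the cyclotomy of $\F_q$ to force the zero-sum condition, and to extract $k$ mutually orthogonal systems out of each base block, so that only $s:=\lceil r/k\rceil$ base blocks are needed. First I would unpack the hypothesis: as $k$ is odd, the condition $q\equiv 2k+1\pmod{4k}$ is equivalent to requiring both $q\equiv 1\pmod k$ and $q\equiv 3\pmod 4$; hence $2k\mid q-1$, the quotient $t:=\tfrac{q-1}{2k}$ is odd, and $V:=\F_q^\Box$ is a half-set of $(\F_q,+)$ of size $v=\tfrac{q-1}{2}=kt$, an admissible order for $(v,k)$ Heffter systems. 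Fix a primitive element $\omega$ and let $C\le\F_q^{*}$ be the subgroup of order $t$, i.e.\ the set of $2k$-th powers; it is contained in $\F_q^\Box$, and $V$ is the disjoint union of the $k$ cosets $\omega^{2i}C$, $0\le i\le k-1$.

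For $\vec a=(a_0,\dots,a_{k-1})\in C^{k}$ set $B(\vec a)=\{\,a_i\omega^{2i}:0\le i\le k-1\,\}$, a transversal of those $k$ cosets. Then $\{\,cB(\vec a):c\in C\,\}$ is a partition of $V$ into $k$-subsets, and it is zero-sum --- hence a $(v,k)$ Heffter system on $V$ --- exactly when $\sum_{i=0}^{k-1}a_i\omega^{2i}=0$; this is the only condition with arithmetic content and is a cyclotomic equation on $C$. Multiplying $B(\vec a)$ successively by $\omega^{2}$ cyclically permutes the $k$ cosets, so a single base block yields $k$ Heffter systems $\mathcal P_\ell(\vec a)=\{\,c\omega^{2\ell}B(\vec a):c\in C\,\}$ ($0\le\ell\le k-1$), the $C$-developments of the cyclic shifts of $B(\vec a)$. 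Writing $C\cong\Z_t$ via $a_i=\omega^{2kx_i}$ and using the elementary fact that two $C$-developments are orthogonal precisely when the coordinatewise quotient of their defining tuples has pairwise distinct entries, I would reduce the whole construction to producing $s=\lceil r/k\rceil$ vectors $\vec x^{(1)},\dots,\vec x^{(s)}\in\Z_t^{k}$ subject to: the cyclotomic equations $\sum_{i}\omega^{2kx^{(m)}_i+2i}=0$; a self-orthogonality requirement stating that for each $m$ and each $\delta\in\{1,\dots,k-1\}$ the vector $\bigl(x^{(m)}_{j-\delta}-x^{(m)}_j\bigr)_{j\in\Z_k}$ has $k$ distinct entries in $\Z_t$; and a cross-orthogonality requirement stating that for $m\neq m'$ and each $\delta\in\Z_k$ the vector $\bigl(x^{(m)}_j-x^{(m')}_{j+\delta}\bigr)_{j\in\Z_k}$ has $k$ distinct entries --- all of this up to a bounded correction of the coordinates by $\pm1$, coming from the twist by $\omega^{2k}$ that a cyclic shift picks up on wrap-around. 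Equivalently, the base blocks together with all their shifts must form a ``ratio packing''. The self-orthogonality requirement automatically rules out degenerate $\vec x^{(m)}$ (those with a nontrivial period, whose $k$ shifted systems would not all be distinct), and the desired $(v,k;r)$-MOHS is then any $r$ of the $sk\ge r$ pairwise orthogonal systems so obtained.

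To produce the vectors I would argue greedily. Expanding the indicator of the cyclotomic equation over the additive characters of $\F_q$ and then over the $2k$ multiplicative characters of order dividing $2k$, standard Gauss-sum bounds (each Gauss sum has modulus $\sqrt q$) yield that the number $N$ of solutions $\vec a\in C^{k}$ of $\sum_i a_i\omega^{2i}=0$ satisfies $N>\tfrac{t^{k}}{q}-O\!\bigl(\tfrac{q^{k/2}}{k}\bigr)$; since $k\ge3$, the main term $t^{k}/q$ overtakes the error once $q$ is moderately large. Within this solution set, each of the $O(k^{3})$ orthogonality constraints attached to a pair of base blocks (indexed by a pair of coordinate positions and a shift $\delta$) is of the form ``a prescribed coordinate difference equals a prescribed element of $\Z_t$'', which cuts the count by a factor $\asymp t$; hence each already-chosen base block, together with the self-orthogonality of the new one, forbids only $O\!\bigl(k^{3}N/t\bigr)$ of the $N$ admissible vectors. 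Comparing $N$ with the accumulated forbidden count $O\!\bigl(sk^{3}N/t\bigr)$ after $s-1$ steps, the greedy selection succeeds provided $t$ exceeds an absolute constant times $k^{3}s$ --- equivalently, since $t=\tfrac{q-1}{2k}$, provided $q$ exceeds an absolute constant times $k^{4}s$ --- and $q$ also clears the (much smaller) threshold at which $t^{k}/q$ dominates the error; tracking the constants, both are subsumed by $q>8k^{5}\lceil r/k\rceil$, which is the hypothesis. A variant would be to write the $\vec x^{(m)}$ down explicitly --- say as a short arithmetic progression of shifts of one fixed admissible vector --- the inequality then guaranteeing enough room in $\Z_t$; I would keep whichever bookkeeping turns out lighter.

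The step I expect to be the real obstacle is making the counting uniform in both $k$ and $r$: the Gauss-sum error \emph{and} the cumulative loss from the $O(k^{3})$ orthogonality constraints per prior base block must simultaneously be dominated by $t^{k}/q$, and obtaining a threshold as clean as $8k^{5}\lceil r/k\rceil$ forces the generous exponent of $k$ and a careful rather than crude evaluation of the character sum --- the naive bound $|N-t^{k}/q|\le q^{k/2}$, obtained by estimating each of the $q-1$ character sums by $q^{k/2}$, is by a factor $\asymp k$ too weak for the small values of $q$ the theorem admits, and the $1/(2k)$ saving produced by expanding over the order-dividing-$2k$ multiplicative characters is what closes the gap. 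A secondary, more routine nuisance is the precise shape of the orthogonality conditions, in particular the $\pm1$ corrections at wrap-around: these complicate the explicit description of the forbidden sets but not their size, and are absorbed into the estimates above.
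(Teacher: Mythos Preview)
Your high-level architecture is exactly the paper's: the $(v,k;r)$-MOHS is obtained from $s=\lceil r/k\rceil$ zero-sum transversals of the cosets of $C=C^{2k}$ in $\F_q^\Box$ (the paper calls these $(\F_q^\Box,k)$ \emph{Heffter rulers}), each contributing $k$ mutually orthogonal Heffter systems by development under $C$ followed by the shifts $\omega^{2\ell}$; the mutual orthogonality of all $sk$ systems is precisely the requirement that the $s$ rulers form a $(\F_q^\Box,k;s)$ \emph{Heffter difference packing}, i.e.\ that their ratio-lists are individually repetition-free and pairwise disjoint. So the reduction you describe is not an alternative framework but Theorem~4.3 and Corollary~4.4 of the paper, and your ``$\pm1$ wrap-around corrections'' are exactly what makes the self-orthogonality condition coincide with the MGR condition on a single ruler.

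Where you genuinely diverge is in \emph{producing} the $s$ rulers. The paper does not count zero-sum transversals globally. It builds each ruler one coordinate at a time: the first $k-2$ entries $b_0,\dots,b_{k-3}$ are chosen greedily in their prescribed cyclotomic classes, avoiding only $O(k^2 s)$ values and \emph{deferring} the zero-sum constraint; the last two entries are then forced to be $x$ and $-x-\sigma$ with $\sigma=\sum_{i\le k-3} b_i$, and the problem collapses to finding a single $x$ in one class with $x+\sigma$ in another while dodging fewer than $k^2(k-1)s$ bad values. One invocation of the explicit Weil-type bound of Lemma~4.5 then gives $q>8k^5 s$ with no further bookkeeping. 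Your Gauss-sum route is sound---the $1/(2k)$ saving is real, arising from the constraint $\prod_i\chi_i=1$ after the additive characters are summed---and for large $k$ would plausibly give a threshold at least as good. But the claim that each forbidden ratio-condition ``cuts the count by a factor $\asymp t$'' is not free: each such condition is a separate $(k-1)$-variable character-sum problem with its own error term, and arriving at the specific constant $8k^5$ means controlling all of those errors simultaneously against the main term $t^k/q$. The paper's device of absorbing the zero-sum constraint into the last two coordinates replaces that entire calculation with a single one-variable inequality.
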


We might be quite satisfied of this result but the density of these sets of MOHSs is very small since it is less than ${1\over 2k^4}$.
On the other hand, some computer results indicate that our constructions succeed for values of $q$ much less than the bound
obtained in the theorem and the density of the related sets of MOHSs is often greater than ${1\over2}$.

It is clear (also from Theorem \ref{thm:Heffter}) that $k^2$ is the smallest admissible $v$ for which it is possible to have a $(v,k;r)$ Heffter configuration
with $r\geq2$. In the extremal case of $v=k^2$ we have a Heffter net.
In Section 5 we will give a method to construct some $(k^2,k;4)$ Heffter nets when $k$ is odd and $2k^2+1$ is a prime power.
Using this method we have established that there exists a cyclic $(k^2,k;4)$ Heffter net for $k=9$, 21 and 27 with the help of a computer.
It is reasonable to believe that this method succeeds for any admissible $k>3$.

We have also constructed a $(121,11;9)$ Heffter net over $\F_{3^5}$ which deserves a special attention considering that it is the
densest Heffter configuration that we have found (its density is equal to ${3\over 4}$). Thus we have the following.

\begin{thm}\label{MOHSnet}
There exists a $(k^2,k;4)$-MOHS for $k=9, 21, 27$ and a $(121,11;9)$-MOHS.
\end{thm}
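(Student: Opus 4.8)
The plan is to reduce the statement, via Proposition~\ref{MOHS=HS}, to the construction of Heffter nets, and then to produce those nets by a finite-field construction completed by a computer search. Indeed, by that proposition a $(v,k;r)$-MOHS is equivalent to a $(v,k;r)$ Heffter configuration, and for $v=k^2$ this is precisely a $(k^2,k;r)$ Heffter net (Definition~\ref{HS}); so it suffices to build a cyclic $(k^2,k;4)$ Heffter net for $k=9,21,27$ and a $(121,11;9)$ Heffter net over $\F_{3^5}$. In each of the four cases the relevant integer $q=2k^2+1$ is a prime power ($163$, $883$, $1459$ are primes and $243=3^5$), and since $k$ is odd one has $q\equiv 3\pmod{4}$, so the set $\F_q^\Box$ of nonzero squares is a half-set of the additive group $\F_q$, of cardinality $k^2$.

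All four nets rest on the same skeleton. Since $q-1=2k^2$, the cyclic group $\F_q^\ast$ has a unique subgroup $H$ of order $k$, and $H\subseteq\F_q^\Box$ because $k\mid k^2$; moreover $\sum_{h\in H}h=0$, being the sum of the $k$ distinct $k$th roots of unity. Hence the $k$ cosets of $H$ contained in $\F_q^\Box$ partition it into zero-sum $k$-sets, i.e.\ they already form one $(k^2,k)$ Heffter system $\mathcal P_0$. The construction of Section~5 then adjoins further parallel classes, each chosen as a union of orbits of a suitable cyclotomic map on $\F_q^\Box$, so that the zero-sum property persists for structural reasons and only the incidence conditions expressing that any two blocks from distinct classes meet in at most one point remain to be enforced.

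To finish, for $k=9,21,27$ I would run the Section~5 procedure over $\Z_q$ to locate orbit representatives giving three more Heffter systems $\mathcal P_1,\mathcal P_2,\mathcal P_3$ with $\{\mathcal P_0,\mathcal P_1,\mathcal P_2,\mathcal P_3\}$ mutually orthogonal; for $k=11$ the same set-up over $\F_{3^5}$ is pushed to eight additional mutually orthogonal classes $\mathcal P_1,\dots,\mathcal P_8$. Once the classes are available, checking that each block is zero-sum and that every two blocks from different classes share at most one point --- so that the triple is a resolved partial linear space with $k^2$ points and constant block size $k$, hence a net --- is a routine finite verification, performed by computer; translating back through Proposition~\ref{MOHS=HS} then yields the claimed MOHSs. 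The real obstacle is not this verification but the search itself: producing cyclotomic orbit representatives, resp.\ (for $k=11$) an explicit Heffter labeling of $\F_{3^5}^\Box$, that make the prescribed number of classes simultaneously zero-sum and pairwise orthogonal. It is exactly this search that forces the reliance on a computer and that explains why the result is asserted only for these few values, the $(121,11;9)$ net standing out because it attains the unusually large degree $9$ (density $3/4$) rather than just $4$.
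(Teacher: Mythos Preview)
Your reduction via Proposition~\ref{MOHS=HS} to the construction of Heffter nets over $\F_q$ with $q=2k^2+1$ is exactly right, and it is also true that one of the parallel classes ends up being the set of cosets in $\F_q^\Box$ of the subgroup $X$ of $k$th roots of unity. But from that point on your description inverts the logic of the actual construction and, as written, is not a proof.

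In the paper the net is fixed \emph{first} as a purely combinatorial object---for $k=3n$ it is the $(9n^2,3n;4)$ net on $\Z_{3n}\times\Z_{3n}$ whose parallel classes are the rows, columns and the two families of diagonals; for $k=11$ it is $\mathrm{AG}(2,11)$ with three parallel classes of lines removed. The incidence conditions (any two blocks from different classes meet in at most one point) are therefore automatic and never need to be ``enforced'' or checked. What is searched for is a \emph{Heffter labeling}, and the key idea is to use the rank-one map $f(i,j)=x^iy_j$ with $x$ a primitive $k$th root of unity and $Y=(y_0,\dots,y_{k-1})$ a system of representatives for the cosets of $C^k$ in $\F_q^*$. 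With this ansatz the image is a half-set $X\cdot Y$ (in general \emph{not} $\F_q^\Box$, contrary to what you write), the rows and columns are zero-sum for free, and the zero-sum condition on each remaining parallel class collapses to a single linear equation in the $y_j$'s: for slope $s$ one needs $\sum_j x^{j}y_{sj}=0$. The computer search is thus for a sequence $Y$ satisfying a handful of such equations (three for $k=3n$, eight for $k=11$), not for orthogonal Heffter systems.

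Your proposal has this backwards: you say the zero-sum property ``persists for structural reasons'' while the incidence conditions ``remain to be enforced'', and you plan to search for extra orthogonal Heffter systems rather than for a labeling of a fixed net. That misses the mechanism that makes the problem tractable; and your appeal to ``the Section~5 procedure'' without articulating it is circular, since that procedure \emph{is} the proof you are meant to supply.
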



The previous results are exploited in Section 6 to construct sets of {\it mutually orthogonal cycle systems}.
First, we put a little bit of order in the literature concerning {\it orthogonal Steiner triple systems}.
Indeed, while according to some authors two S$(2,3,v)$ are orthogonal simply when they do not have any block in common,
according to many others (see, e.g., \cite{CGMMR,DDL,G,Gross,MN,shaug} and chapter 14.4 in \cite{CR}) they should satisfy an additional demanding condition that
we will reformulate in terms of near 1-factorizations of the complete graph. In order to avoid confusion, in the second case we will speak of
{\it super-orthogonal} Steiner triple systems.

Then we survey the main results on the existence of super-orthogonal Steiner triples systems and
on orthogonal $k$-cycle systems for any $k$.

Finally, as major application of our research on the Heffter spaces,
 we prove the following.
\begin{thm}\label{final}
If $q=2kw+1$ is a prime power with $kw$ odd and $k\geq 3$, then
there are at least $\lceil{w\over4k^4}\rceil$ mutually orthogonal $k$-cycle systems of order $q$.
\end{thm}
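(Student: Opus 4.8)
The plan is to deduce Theorem~\ref{final} from Theorem~\ref{>8k^5r/k}, combined with the (by then established) correspondence of Section~6 between \emph{simple} Heffter systems and cyclic $k$-cycle decompositions. Set $v=(q-1)/2$ and $r=\lceil w/(4k^4)\rceil$; since $q=2kw+1$ we have $v=kw$, and since $kw$ is odd both $k$ and $w$ are odd, so writing $w=2s+1$ gives $q=4ks+2k+1\equiv 2k+1\pmod{4k}$, which is the congruence hypothesis of Theorem~\ref{>8k^5r/k}. The aim is then to obtain a simple $(v,k;r)$-MOHS on a half-set of $\F_q$ and read off from it $r$ mutually orthogonal $k$-cycle systems of $K_q$. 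If $r=1$ (equivalently $1\le w\le 4k^4$) there is nothing to do beyond producing a single $k$-cycle decomposition of $K_q$, which exists by \cite{BGL,BurDel} because $q\equiv 1\pmod{2k}$; so we may assume $r\ge 2$, i.e.\ $w\ge 4k^4+1$.

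Next I would check the remaining hypothesis of Theorem~\ref{>8k^5r/k}, namely $q>8k^5\lceil r/k\rceil$. Using $\lceil\lceil a\rceil/k\rceil=\lceil a/k\rceil$ for the positive integer $k$, we get $\lceil r/k\rceil=\lceil w/(4k^5)\rceil$. If $w\le 4k^5$ this equals $1$ and the required bound $2kw+1>8k^5$ is immediate from $w\ge 4k^4+1$. If $w>4k^5$, then $8k^5\lceil w/(4k^5)\rceil<2w+8k^5$, which is smaller than $q=2kw+1$ because $2(k-1)w>8k^5$ when $k\ge 3$ and $w>4k^5$. In either case Theorem~\ref{>8k^5r/k} applies and yields a $(v,k;r)$-MOHS $\{\mathcal P_1,\dots,\mathcal P_r\}$ on a half-set $V$ of $\F_q$.

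Finally I would pass from this MOHS to orthogonal cycle systems. As spelled out in Section~6, a simple $(v,k)$ Heffter system on a half-set $V$ of $\F_q$ develops into an $(\F_q,+)$-invariant $k$-cycle decomposition of $K_q$: an ordered block $B$ with partial sums $c_0,\dots,c_{k-1}=0$ yields a base $k$-cycle $C_B$ on $\{0,c_0,\dots,c_{k-2}\}$, and translating over $\F_q$ partitions $E(K_q)$ precisely because $V$ is a half-set. If $B\in\mathcal P_i$ and $B'\in\mathcal P_j$ with $i\ne j$, then a translate $C_B+g$ and a translate $C_{B'}+g'$ meet in at most one edge, since two common edges would have their $\pm$-differences lying in $V$ for both systems, hence would supply two distinct elements of $B\cap B'$ and contradict orthogonality; thus the $r$ resulting $k$-cycle systems are pairwise orthogonal, which completes the proof. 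The one point that genuinely needs attention --- and the main (minor) obstacle --- is that each $\mathcal P_i$ can be ordered so as to be simple: this is \emph{not} covered by the general simplicity conjecture of the introduction, but it holds for the Heffter systems produced by the construction behind Theorem~\ref{>8k^5r/k}, whose base blocks are exhibited explicitly with distinct partial sums, so I would either quote that feature from Section~4 or verify it directly. Everything else reduces to the elementary arithmetic above together with Theorem~\ref{>8k^5r/k}.
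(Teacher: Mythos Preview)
Your proposal is correct and follows essentially the same route as the paper. The only cosmetic difference is that the paper invokes Corollary~\ref{asymptotic2} directly (which already asserts \emph{simplicity} of the resulting Heffter configuration), whereas you invoke the derived Theorem~\ref{>8k^5r/k} and then separately recover simplicity from the construction in Section~4; your explicit treatment of the case $r=1$ and the arithmetic verification of $q>8k^5\lceil r/k\rceil$ are in fact more careful than what the paper spells out.
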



%
%

\section{Heffter linear spaces: do they exist?}

It is natural to ask whether there exists a Heffter space which is linear, in particular a Heffter S$(2,k,v)$.

To understand how hard this question is, it is convenient to make a small digression on the theory
of {\it additive designs} founded in \cite{CFP} and then developed in various papers such as \cite{BN1,BN2,CFP2,FP}.

A $t$-$(v,k,\lambda)$ design ${\mathcal D}=(V,{\mathcal B})$, in particular a S$(2,k,v)$, is said to be $G$-additive
if there exists a {\it $G$-additive labeling} of $\mathcal D$, i.e., an injective map
$f$ from $V$  to an abelian group $G$ such that $\sum_{x\in B}f(x)=0$
for every block $B\in{\mathcal B}$.

The definition can be extended, in the obvious way, to any combinatorial design.
For instance, in \cite{BM} {\it additive graph decompositions} will be considered.

Now note that the definition of a Heffter space can be equivalently formulated saying that it is
a $G$-additive $(v,\{k_1,\dots,k_r\})$-RPLS satisfying the ``half-set condition" that
the labels fill a half-set of $G$.

The last request becomes very demanding if we want a linear space.
Indeed we have to consider that the construction of additive Steiner 2-designs appears
to be very difficult,  even without any other additional request (see \cite{BN1,BN2}).
A Heffter Steiner 2-design besides being additive has to be resolvable
and has to satisfy the half-set condition.

In the next proposition we give a necessary condition showing, as a corollary, that the answer to our question is certainly
negative for cyclic Heffter spaces.

\begin{prop}\label{linear}
If there exists a Heffter linear space of degree $r$ over a group $G$,
then $r-1$ is a multiple of the order of any element $g\in G$.
\end{prop}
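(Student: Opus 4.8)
The plan is to exploit the Heffter labeling together with linearity: fix a point $x$ and show that summing over all the blocks through $x$ forces a relation in $G$. Let ${\mathcal S}=(V,{\mathcal B},{\mathcal R})$ be a Heffter linear space of degree $r$ over $G$, with resolution ${\mathcal R}=\{{\mathcal P}_1,\dots,{\mathcal P}_r\}$, and identify $V$ with a half-set of $G$ via the Heffter labeling (so every block is zero-sum and every element of $G\setminus\{0\}$ is represented, up to sign, exactly once among the points). Pick any $x\in V$ and, for $1\le i\le r$, let $B_i$ be the unique block of ${\mathcal P}_i$ containing $x$. Since ${\mathcal S}$ is linear, every point of $V\setminus\{x\}$ is collinear with $x$, hence lies in exactly one of the $B_i$ (exactly one, because two of the $B_i$ meet only in $x$, being blocks of a PLS through the common point $x$). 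Therefore $\bigcup_{i=1}^r (B_i\setminus\{x\})=V\setminus\{x\}$ is a \emph{disjoint} union.

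Now sum. On one hand, $\sum_{i=1}^r \sum_{y\in B_i} y = \sum_{i=1}^r 0 = 0$ because each block is zero-sum. On the other hand, the point $x$ is counted $r$ times while every other point is counted exactly once, so this sum equals $rx+\sum_{y\in V\setminus\{x\}} y = (r-1)x+\sum_{y\in V} y$. But $V$ is a half-set, hence zero-sum (as noted after Definition~\ref{HeffterSystem}, since it is partitioned by the zero-sum blocks of any one ${\mathcal P}_i$), so $\sum_{y\in V} y=0$. Combining, $(r-1)x=0$ in $G$. Since $x$ was an arbitrary element of the half-set $V$, and $V$ together with its negatives $-V$ exhausts $G\setminus\{0\}$, we get $(r-1)g=0$ for every $g\in G$ (the case $g=0$ being trivial). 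Equivalently, the order of each $g\in G$ divides $r-1$, which is the claim.

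The argument is almost entirely bookkeeping; the one point deserving care is the claim that the sets $B_i\setminus\{x\}$ are pairwise disjoint, i.e.\ that no point other than $x$ lies on two of the blocks through $x$. This is exactly the PLS axiom applied to the pair $\{x,y\}$: if $y\in B_i\cap B_j$ with $i\ne j$, then $B_i$ and $B_j$ are two blocks containing the $2$-subset $\{x,y\}$, forcing $B_i=B_j$; but blocks in distinct parallel classes of a configuration are distinct, a contradiction. The only genuine input beyond linearity is the half-set condition, which is what pins down $\sum_{y\in V} y = 0$ and turns the relation into one valid for a \emph{generating} set of $G$; without it one would only get $(r-1)x \in \langle \text{fixed element}\rangle$-type information. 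I expect no real obstacle here. As an immediate corollary, a cyclic Heffter linear space over $\Z_{2v+1}$ would need $2v+1\mid r-1$, and since $r-1\le v-1<2v+1$ by Corollary~\ref{collinear2} this is impossible unless $r=1$, so no nontrivial cyclic Heffter linear space exists.
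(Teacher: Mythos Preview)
Your proof is correct and follows essentially the same approach as the paper: fix a point $x$, use linearity to see that the blocks through $x$ partition $V$ (with $x$ counted $r$ times), sum using the zero-sum property of blocks and of the half-set $V$, and conclude $(r-1)x=0$. The only cosmetic difference is that you compute $\sum_i\sum_{y\in B_i}y$ directly while the paper sums the sets $B_i\setminus\{x\}$ (each contributing $-x$) and then adds back $x$; the algebra is identical.
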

\begin{proof}
Let ${\mathcal S}$ be a Heffter linear space as in the statement and let $V$ be its point set so that we have $G = \{0\} \ \cup \ V \ \cup \ -V$.
It is enough to prove that $o(g)$ divides $r-1$ only for $g\in V$. Indeed the assertion is obvious for $g=0$ and the orders of the elements of $-V$ coincide with
the orders of their opposite which are in $V$.
So take $g\in V$ and let $B_1$, \dots, $B_r$ be the blocks of ${\mathcal S}$ passing through $g$.
Given that each $B_i$ is zero-sum, we have that the elements of $B_i\setminus\{g\}$
sum up to $-g$ for $1\leq i\leq r$. The fact that ${\mathcal S}$ is linear implies that $\{B_i\setminus\{g\} \ : \ 1\leq i\leq r\}$ is a partition of $V\setminus\{g\}$
so that the sum of all the elements of $V$ is equal to $r(-g)+g=(1-r)g$. On the other hand, as observed shortly after
Definition \ref{HeffterSystem}, the half-set $V$ is zero-sum. Thus the identity $(r-1)g=0$ holds in $G$, i.e., $o(g)$ divides $r-1$.
\end{proof}

\begin{cor}\label{linear2}
A cyclic Heffter space cannot be linear.
\end{cor}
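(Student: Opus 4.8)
The plan is to read this off directly from Proposition~\ref{linear}. Suppose, for contradiction, that $\mathcal S=(V,\mathcal B,\mathcal R)$ is a cyclic Heffter linear space of degree $r$. By definition its underlying group is a cyclic group, which (being abelian of odd order $2v+1\ge7$) is $G=\Z_{2v+1}$, and $G$ contains an element $g$ of order $2v+1$, namely any generator. Proposition~\ref{linear} then forces $o(g)=2v+1$ to divide $r-1$, so that either $r=1$ or $r-1\ge 2v+1$.

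Next I would rule out the possibility $r-1\ge 2v+1$ by a crude count of collinear points, essentially Proposition~\ref{collinear}. First note that every block of a Heffter system has size at least $3$: a putative block $\{a,-a\}$ cannot occur, since a half‑set $V$ contains exactly one of $a,-a$, while $a=-a$ would force $a=0\notin V$. Hence, writing $\mathcal R=\{\mathcal P_1,\dots,\mathcal P_r\}$ with $\mathcal P_i$ a $(v,k_i)$ Heffter system, we have $k_i\ge3$ for all $i$, and by (the proof of) Proposition~\ref{collinear} a fixed point $x\in V$ is collinear with $\sum_{i=1}^{r}(k_i-1)\ge 2r$ other points. Since the number of points collinear with $x$ cannot exceed $v-1$, this gives $r\le (v-1)/2<2v+1$, hence $r-1<2v+1$, contradicting $r-1\ge 2v+1$. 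Thus only $r=1$ survives.

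Finally, I would dispose of the leftover case $r=1$, which is the one mildly delicate point in the whole argument. A resolved PLS of degree $1$ consists of a single parallel class, i.e.\ of a single partition of $V$ into blocks, and such a structure is a \emph{linear} space only when that partition is the trivial one $\{V\}$ — a single block containing every point — which is not a configuration of the kind considered here; so the statement should be understood for $r\ge2$ (or, equivalently, the degenerate single‑block Heffter system is excluded). With this understood, the combination ``$2v+1\mid r-1$, $1\le r-1<2v+1$'' is impossible, and no cyclic Heffter linear space exists. Apart from this bookkeeping about $r=1$, everything is immediate from Proposition~\ref{linear}, so I do not expect any real obstacle.
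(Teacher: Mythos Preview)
Your argument is correct and follows essentially the same route as the paper: apply Proposition~\ref{linear} to an element of order $2v+1$ in $\Z_{2v+1}$, deduce $2v+1\mid r-1$, and then derive a contradiction from the fact that $r$ cannot be that large. The paper is more terse at this last step --- it simply observes that $r-1\ge 2v+1$ would force the degree of a point to exceed the number $v$ of points, which is impossible in a PLS whose blocks all have size at least $2$ --- whereas you invoke Proposition~\ref{collinear} together with the observation $k_i\ge3$ to get the (stronger but unnecessary) bound $r\le (v-1)/2$. Either bound does the job.

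Your discussion of the degenerate case $r=1$ is a fair point that the paper glosses over: formally, the single--block structure $(V,\{V\},\{\{V\}\})$ is a cyclic Heffter space of degree~$1$ which is linear in the sense of the paper's definition, and Proposition~\ref{linear} yields only the trivial relation $2v+1\mid 0$ there. The paper tacitly assumes $r\ge2$ (i.e.\ excludes the trivial linear space with a single line), and your way of flagging and disposing of this case is appropriate.
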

\begin{proof}
Let $r$ be the degree of a Heffter linear space over $\Z_{2v+1}$.
By Proposition \ref{linear}, $r-1$ should be a multiple of the order of 1 that is $2v+1$. Thus the degree of a point would
be greater than the number $v$ of the points. This is obviously absurd.
\end{proof}

\begin{cor}\label{linear3}
A necessary condition for the existence of a Heffter linear space of order $v$ and degree $r$ is that every prime
divisor of $2v+1$ is also a divisor of $r-1$.
\end{cor}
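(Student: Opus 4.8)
The plan is to reduce the statement to Proposition \ref{linear} by producing, for each prime $p$ dividing $2v+1$, an element of $G$ of order $p$. Suppose a Heffter linear space $\mathcal S$ of order $v$ and degree $r$ exists. By Definition \ref{HS}, $\mathcal S$ is a Heffter space over some abelian group $G$; since its point set $V$ is a half-set of $G$, it consists of exactly $v$ of the pairs $\{g,-g\}$ with $g\neq 0$, and therefore $|G|=2v+1$.

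Now fix a prime $p$ with $p\mid 2v+1=|G|$. Since $G$ is a finite abelian group whose order is divisible by $p$, Cauchy's theorem (equivalently, the structure theorem for finite abelian groups) guarantees an element $g\in G$ with $o(g)=p$. Applying Proposition \ref{linear} to this $g$, we obtain that $o(g)=p$ divides $r-1$. As $p$ was an arbitrary prime divisor of $2v+1$, every prime divisor of $2v+1$ divides $r-1$, which is exactly the assertion.

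I do not expect any genuine obstacle here: the only thing used beyond Proposition \ref{linear} is the elementary fact that a finite abelian group of order divisible by $p$ has an element of order $p$. One could, if desired, record the marginally stronger conclusion that $r-1$ is a multiple of the exponent of $G$; but since the group $G$ is not prescribed by the parameters $(v,r)$, phrasing the necessary condition purely in terms of the prime divisors of $2v+1$ is the natural choice, and it is what the statement asks for.
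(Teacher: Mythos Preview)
Your proof is correct and follows essentially the same approach as the paper: invoke Cauchy's theorem to produce an element of order $p$ for each prime $p\mid 2v+1$, then apply Proposition~\ref{linear}. The only difference is that you spell out why $|G|=2v+1$ and add a side remark about the exponent of $G$, neither of which changes the argument.
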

\begin{proof}
Assume that there exists a Heffter linear space of order $v$ and degree $r$ over a group $G$
and let $p$ be a prime factor of $2v+1$.
By the theorem of Cauchy, the group $G$ has an element of order $p$ and then, by Proposition \ref{linear},
$p$ is a divisor of $r-1$.
\end{proof}

It is clear from the previous corollaries that the condition of Proposition \ref{linear} is very strict. Yet, we are going to see that there
are values of $v$ and $k$ for which this condition does not forbid a Heffter S$(2,k,v)$.
In the next proposition, given a natural number $n$,
we denote by $R_n$ the ring which is the direct product of all the fields whose orders are the
maximal prime power factors of $n$ so that, for instance, $R_{45}$ means $\F_9\times\F_5$.
Also, we denote by $rad(n)$ the {\it radical} of $n$, that is the product of all distinct prime divisors of $n$.
\begin{prop}\label{linear2}
Let $u\geq7$ be an odd integer with $u\not\equiv3$ $($mod $9)$.
 Then Proposition \ref{linear} does not exclude the existence of a Heffter S$(2,k,v)$ over $R_{u^n}$
 with $k={u-1\over2}$, $v={u^{n}-1\over2}$ and $n\equiv1$ $($mod $\omega)$ where $\omega$ is the multiplicative order of $3$ $($mod $u-3)$
 or $($mod ${u-3\over3})$ according to whether $3$ does not divide or divides $u-3$, respectively.
\end{prop}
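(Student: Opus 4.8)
The plan is to verify that the single numerical constraint imposed by Proposition~\ref{linear}, namely that $r-1$ be a common multiple of the orders of all elements of the group, can be met by the proposed parameters, where here the degree must be $r={v-1\over k-1}={(u^n-1)/2 - 1\over (u-1)/2 - 1}$ by Corollary~\ref{collinear2} (a Heffter $\mathrm S(2,k,v)$ has degree exactly ${v-1\over k-1}$). So the first step is purely arithmetic: compute $r-1$ for $k={u-1\over2}$ and $v={u^n-1\over2}$. We have $v-1={u^n-3\over2}$ and $k-1={u-3\over2}$, so $r={u^n-3\over u-3}$ and $r-1={u^n-3-(u-3)\over u-3}={u^n-u\over u-3}={u(u^{n-1}-1)\over u-3}$. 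I would simplify this and isolate the factor $u^{n-1}-1$, which will be the source of all divisibility, since $u^{n-1}-1\equiv 0\pmod{m}$ whenever $n-1$ is a multiple of the multiplicative order of $u$ modulo $m$.

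The second step is to identify the group $G$ and the orders of its elements. Taking $G=R_{u^n}$, the direct product of the fields $\F_{p^a}$ where $p^a$ runs over the maximal prime-power divisors of $u^n=\prod p^{na_p}$ (writing $u=\prod p^{a_p}$), the additive group of each factor $\F_{p^{na_p}}$ is elementary abelian of exponent $p$, so the exponent of $G$ is $rad(u)=rad(u^n)$. Hence the condition of Proposition~\ref{linear} reduces to: every prime divisor $p$ of $u$ divides $r-1$. By Corollary~\ref{linear3} this is exactly the necessary condition, so I need $rad(u)\mid r-1={u(u^{n-1}-1)\over u-3}$. Since every prime dividing $u$ divides the numerator factor $u$, the only thing to check is that no such prime is ``lost'' to the denominator $u-3$; because $\gcd(u,u-3)=\gcd(u,3)\in\{1,3\}$, a prime $p\mid u$ with $p\neq 3$ automatically survives, and the case $p=3$ (i.e.\ $3\mid u$, which since $u\not\equiv 3\pmod 9$ means $u\equiv 6\pmod 9$, so $3\|u$) needs the extra factor of $3$ to come from $u^{n-1}-1$ — which is why the statement splits into the two cases ``$3\mid u-3$'' versus ``$3\nmid u-3$'' and defines $\omega$ via the order of $3$ modulo $u-3$ or ${u-3\over3}$.

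The third step is to make precise why $n\equiv 1\pmod\omega$ delivers the needed divisibility. With $n-1$ a multiple of $\omega$ and $\omega$ the multiplicative order of $3$ modulo $u-3$ (resp.\ ${u-3\over3}$), one gets $3^{n-1}\equiv 1$, and I would want to transfer this to $u^{n-1}\equiv 1$ by observing $u\equiv 3\pmod{u-3}$, so $u^{n-1}\equiv 3^{n-1}\pmod{u-3}$; thus $u-3$ divides $u^{n-1}-1$ in the first case, which makes the quotient $r-1={u(u^{n-1}-1)\over u-3}$ an honest integer divisible by $u$, hence by $rad(u)$. In the second case ($3\mid u-3$, i.e.\ $u\equiv 6\pmod 9$ so that $3\|(u-3)$ and also $3\|u$) one only gets ${u-3\over 3}\mid u^{n-1}-1$ directly, and I would then check separately that the single remaining factor of $3$ needed to clear the denominator and still leave a multiple of $3$ is present — this is exactly the delicate $3$-adic bookkeeping and I expect it to be the main obstacle: one must track $v_3(u^{n-1}-1)$, using $v_3(u-3)=1$ together with $u\not\equiv 3\pmod 9$, and conclude $v_3(r-1)\ge 1=v_3(rad(u))$. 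I would also remark (the hypothesis $u\ge 7$, $u$ odd) that $2v+1=u^n$ so $v={u^n-1\over2}$ is indeed an integer and $k={u-1\over2}\ge 3$, so the parameters are admissible in the sense of Corollary~\ref{collinear2}, and finally note that Corollary~\ref{linear3}'s necessary condition (every prime divisor of $2v+1=u^n$, i.e.\ of $rad(u)$, divides $r-1$) is thereby shown to hold, so Proposition~\ref{linear} does not exclude such a Heffter $\mathrm S(2,k,v)$.
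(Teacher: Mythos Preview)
Your approach is exactly the paper's: compute $r-1=\dfrac{u(u^{n-1}-1)}{u-3}$, note that every element of $R_{u^n}$ has order dividing $rad(u)$, and verify $rad(u)\mid r-1$. In Case~1 ($3\nmid u-3$) your argument is complete and coincides with the paper's, since $u\equiv3\pmod{u-3}$ and $\omega\mid n-1$ give $(u-3)\mid u^{n-1}-1$, so $r-1$ is $u$ times an integer and hence divisible by $rad(u)$.

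The genuine gap is in Case~2. First a minor slip: $3\mid u-3$ together with $u\not\equiv3\pmod9$ allows $u\equiv0$ \emph{or} $6\pmod9$, not just $u\equiv6\pmod9$. More seriously, you flag the $3$-adic bookkeeping as ``the main obstacle'' but then simply assert the desired outcome $v_3(r-1)\ge1$; this assertion actually \emph{fails} when $u\equiv6\pmod9$. Indeed, for $n\ge2$ and $3\mid u$ one has $v_3(u^{n-1}-1)=0$, and since $v_3(u-3)=1$ by hypothesis, $v_3(r-1)=v_3(u)-1$. When $3\,\|\,u$ this is $0$. Concretely, take $u=15$, $n=3$: here $\omega=\mathrm{ord}_4(3)=2$, and
\[
r-1=\frac{15(15^{2}-1)}{12}=280=2^{3}\cdot5\cdot7,
\]
which is not divisible by $3$, although $3\mid rad(15)$. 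Thus Proposition~\ref{linear} \emph{does} rule out a Heffter $\mathrm S(2,7,1687)$ over $R_{15^{3}}=\F_{27}\times\F_{125}$, so the stated proposition fails for these parameters.

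This is not a defect peculiar to your write-up: the paper's own proof claims that $\dfrac{u^{n-1}-1}{u-3}$ is an integer ``by the definition of $\omega$'', which in Case~2 is false for the same reason ($3\mid u-3$ but $3\nmid u^{n-1}-1$). The statement becomes correct if one strengthens the hypothesis to $u\not\equiv3,6\pmod9$ (equivalently: $3\nmid u$ or $9\mid u$); in the subcase $9\mid u$ your computation $v_3(r-1)=v_3(u)-1\ge1$ then finishes the proof.
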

\begin{proof}
Let $v$ and $k$ be two integers as in the statement. It is obvious that $v\equiv0$ (mod $k$). Also, by definition of $\omega$, it is easy to check that
$v-1\equiv0$ (mod $k-1$).
Thus the trivial necessary conditions for the existence of a resolvable S$(2,k,v)$ are satisfied.
The degree of such a putative S$(2,k,v)$ is the so-called {\it replication number} $r={v-1\over k-1}$ so that we have
$r-1=u\cdot {u^{n-1}-1\over u-3}$. It follows that $u$ divides $r-1$ since one can see that ${u^{n-1}-1\over u-3}$
is an integer by the definition of $\omega$ again. Also note that $2v+1=u^n$ and that the orders of the elements
of $R_{u^n}$ are the divisors of rad$(u)$. Indeed, if $u=p_1^{\alpha_1}\cdot{\dots}\cdot p_t^{\alpha_t}$ is
the prime power factorization of $u$, we have rad$(u)=p_1\cdot {\dots} \cdot p_t$ and
$R_{u^n}=\F_{p_1^{(\alpha_1n)}}\times{\dots}\times\F_{p_t^{(\alpha_tn)}}$. The order of an
element $x=(x_1,\dots,x_t)\in R_{u^n}$ is clearly equal to the product of all the primes $p_i$ with $x_i\neq0$.
Thus the order of any $x\in R_{u^n}$ is a divisor of $rad(u)$. It follows that it divides  $u$ and then $r-1$.
\end{proof}

The previous proposition applied with $u=7$ does not exclude that there exists a Heffter S$(2,3,{7^n-1\over2})$ with $n$ odd.
On the other hand, according to Theorem 3.7 in \cite{CFP}, the only additive S$(2,3,v)$ are the point-line designs associated with a projective geometry
over $\F_2$ and the point-line designs associated with an affine geometry over $\F_3$. In the first case $v$ is a Mersenne number,
in the second case $v$ is a power of 3. Thus, for a putative Heffter S$(2,3,{7^n-1\over2})$ we should have ${7^n-1\over2}=2^m-1$ or
${7^n-1\over2}=3^m$ for some $m$. This would mean that either $7^n+1=2^{m+1}$ or ${7^{n}-1\over7-1}=3^{m-1}$. But the first identity contradicts
the well-celebrated proof of Catalan's conjecture in \cite{M} that 8 and 9 are the only two consecutive perfect powers of natural numbers.
The second identity is also not possible in view of the known results on the famous {\it Nagell-Ljunggren equation} ${x^n-1\over x-1}=y^m$
(see \cite{BL}).

Applying Proposition \ref{linear2} with $u=9$ we deduce the possible existence of an S$(2,4,{9^n-1\over2})$ over $\F_{9^n}$.
The point-line design associated with the projective geometry PG$(2n-1,3)$ is actually a resolvable Steiner 2-design with
these parameters. It is also $\F_{9^n}$-additive in view of Theorem 5.1 in \cite{BN2}. But unfortunately, the
$\F_{9^n}$-additive labeling produced by the proof of this theorem is not a Heffter labeling since it does not satisfy the half-set condition.
Indeed the labels fill the set of squares of $\F_{9^n}$ which is not a half-set of $\F_{9^n}$.

Applying Proposition \ref{linear2} with $u=11$ we deduce the possible existence of an S$(2,5,{11^n-1\over2})$ over $\F_{11^n}$
with $n$ odd. For instance, we cannot exclude that there exists a Heffter S$(2,5,665)$ over $\F_{11^3}$.

\section{Heffter spaces via partial partitions of $\F_q^\Box$}

A set $\mathcal S$ of subgroups of a group $G$ covering the whole $G$ and intersecting each other trivially is said to be {\it a partition of $G$}
and gives rise to a resolvable linear space with special properties \cite{Andre}. In particular, when all the subgroups
have the same order $s$ and $G$ has order $s^2$ it is called a {\it partial congruence partition} of $G$ and it gives rise to a
{\it translation net} (see, e.g., \cite{BaJu,{HJ}}).
Here we speak of a {\it partial partition of a group $G$} (without the term ``congruence") to mean a set $\mathcal S$ of subgroups of $G$
intersecting each other trivially but not necessarily covering the whole $G$.
The following result is probably folklore but we prove it anyway for convenience of the reader.
\begin{prop}\label{Andre}
Let $\mathcal S=\{S_1,\dots,S_r\}$ be a partial partition of a group $G$ of order $v$, let $k_i$ be the order of $S_i$, and let $\mathcal P_i$ be
the set of all right cosets of $S_i$ in $G$ for $1\leq i\leq r$. Then $\mathcal P_1$, ...,  $\mathcal P_r$ are the parallel classes
of a $(v,\{k_1,\dots,k_r\})$-RPLS.
\end{prop}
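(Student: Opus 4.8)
The plan is to verify directly that the triple $(G,\mathcal B,\{\mathcal P_1,\dots,\mathcal P_r\})$ with $\mathcal B=\bigcup_{i=1}^r\mathcal P_i$ is a partial linear space and that the $\mathcal P_i$ form a resolution of it into parallel classes with the prescribed block sizes. First I would record the elementary facts about cosets: for each $i$, the right cosets of $S_i$ partition $G$ into $v/k_i$ blocks each of size $k_i=|S_i|$, so $\mathcal P_i$ is a parallel class whose blocks all have size $k_i$; this immediately gives that $\{\mathcal P_1,\dots,\mathcal P_r\}$ is a candidate resolution and that, if $(G,\mathcal B)$ turns out to be a PLS, it is a $(v,\{k_1,\dots,k_r\})$-RPLS as claimed.

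The substantive point is the partial-linearity: any two distinct points $x,y\in G$ lie together in at most one block of $\mathcal B$. Suppose $x,y$ both lie in some block of $\mathcal P_i$ and also in some block of $\mathcal P_j$. A block of $\mathcal P_i$ containing $x$ is the coset $S_ix$, and $y\in S_ix$ forces $yx^{-1}\in S_i$; similarly $yx^{-1}\in S_j$. Hence $yx^{-1}\in S_i\cap S_j$. If $i\neq j$, then $S_i\cap S_j=\{1\}$ by the definition of a partial partition, so $yx^{-1}=1$, i.e.\ $x=y$, contradicting $x\neq y$. Therefore two distinct points can be collinear only through blocks coming from a single class $\mathcal P_i$; but within $\mathcal P_i$ the blocks are pairwise disjoint (they form a partition of $G$), so there is exactly one such block. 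This shows every $2$-subset of $G$ is contained in at most one block, which is precisely the PLS condition.

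I would also remark, for completeness, that the blocks in $\mathcal B$ coming from different classes are genuinely distinct as members of the resolution even when they coincide as subsets (which can happen only if $S_i=S_j$ as cosets of the identity, etc.), so one should think of $\mathcal B$ together with the labelling by classes; since the statement only claims the existence of the RPLS with these parallel classes, this bookkeeping is harmless. No step here is a real obstacle: the only thing to be careful about is invoking the trivial-intersection hypothesis exactly at the moment two classes are involved, and invoking the partition property of a single $\mathcal P_i$ for the within-class case. Assembling these observations yields that $\mathcal P_1,\dots,\mathcal P_r$ are the parallel classes of a $(v,\{k_1,\dots,k_r\})$-RPLS, as desired.
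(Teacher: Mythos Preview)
Your proof is correct and follows essentially the same route as the paper: both arguments reduce partial-linearity to the observation that $x\neq y$ in a common right coset of $S_i$ forces $yx^{-1}\in S_i\setminus\{1\}$, and then invoke the trivial-intersection hypothesis to see that at most one index $i$ can work. The paper phrases this a bit more compactly (handling the within-class and between-class cases simultaneously), and your closing remark about blocks possibly coinciding as subsets is unnecessary here since distinct $S_i$ cannot share a coset of size $>1$, but these are cosmetic differences only.
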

\begin{proof}
Let $\mathcal B$ be the union of all the $\mathcal P_i$'s.
If two distinct elements $g, h$ of $G$ are contained in a right coset of a subgroup $S_i$,
we necessarily have $gh^{-1}\in S_i$. On the other hand, by definition of a partial partition of $G$ the element $gh^{-1}$ belongs
to at most one subgroup of ${\mathcal S}$. We conclude that $\{g,h\}$ is contained in at most one member of $\mathcal B$, i.e.,
$(G,{\mathcal B})$ is a partial linear space.  Each ${\mathcal P}_i$ is obviously a partition of $G$ for each $i$
so that $\{\mathcal P_1, ...,  \mathcal P_r\}$ is a resolution of $(G,{\mathcal B})$ and the assertion follows.
\end{proof}

\begin{thm}\label{planica}
Let $q=2v+1$ be a prime power with $v$ the product of $r$ mutually coprime odd integers $k_1,\dots,k_r$.
Then there exists a $(v,\{k_1,\dots,k_r\})$ Heffter space over $\F_q$.
\end{thm}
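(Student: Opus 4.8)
The plan is to obtain the desired Heffter space as the resolved partial linear space produced, via Proposition \ref{Andre}, by a suitable partial partition of the multiplicative group of non-zero squares of $\F_q$, and then to check that every block of it is zero-sum in the additive group of $\F_q$.

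First I would note that $v=k_1\cdots k_r$ is odd (a product of odd integers), so $q=2v+1\equiv 3\pmod 4$; hence, as recalled in the Introduction, $V:=\F_q^\Box$ is a half-set of $(\F_q,+)$ and $|V|=(q-1)/2=v$. Then I would switch to the \emph{multiplicative} point of view: $V$ is the index-$2$ subgroup of the cyclic group $\F_q^*$, hence itself cyclic of order $v$. For each $i$, since $k_i\mid v$ the group $V$ has a unique subgroup $H_i$ of order $k_i$ — the group of $k_i$-th roots of unity of $\F_q$ — and, the $k_i$ being pairwise coprime, $|H_i\cap H_j|$ divides $\gcd(k_i,k_j)=1$ for $i\ne j$, so $\mathcal S=\{H_1,\dots,H_r\}$ is a partial partition of $V$. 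Applying Proposition \ref{Andre}, with $\mathcal P_i$ the set of (multiplicative) cosets of $H_i$ in $V$, gives that $\mathcal P_1,\dots,\mathcal P_r$ are the parallel classes of a $(v,\{k_1,\dots,k_r\})$-RPLS on the point set $V$.

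The only step needing a genuine argument is that this RPLS is a Heffter space, i.e.\ that every block is zero-sum in $(\F_q,+)$. A block of $\mathcal P_i$ is $gH_i$ for some $g\in V$, and $\sum_{x\in gH_i}x=g\sum_{h\in H_i}h$, so I only have to check $\sum_{h\in H_i}h=0$. Since $k_i\mid q-1$, $H_i$ is exactly the set of the $k_i$ distinct roots in $\F_q$ of $x^{k_i}-1$, so by Vieta's formulas $\sum_{h\in H_i}h$ equals the negative of the coefficient of $x^{k_i-1}$ in $x^{k_i}-1$, i.e.\ $0$, provided $k_i\ge 2$; and this holds because every $k_i$ is odd and bigger than $1$. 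Hence each $\mathcal P_i$ partitions the half-set $V$ into zero-sum blocks of size $k_i$, i.e.\ is a $(v,k_i)$ Heffter system on $V$, and the RPLS is a $(v,\{k_1,\dots,k_r\})$ Heffter space over $\F_q$.

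I do not foresee a real obstacle: once the problem is reduced to a partial partition of $\F_q^\Box$, all the content lies in the vanishing of the sum of the $k_i$-th roots of unity, which is immediate. The one delicate point is the degenerate value $k_i=1$, which must be excluded — a singleton block can never be zero-sum — and which is in any case ruled out automatically, since a Heffter block has size at least $3$.
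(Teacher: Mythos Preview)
Your proof is correct and follows essentially the same route as the paper: take the groups of $k_i$-th roots of unity as a partial partition of $\F_q^\Box$, apply Proposition~\ref{Andre}, and then verify the zero-sum condition. The only cosmetic difference is that the paper cites an external ``Fact~1'' for the vanishing of $\sum_{h\in H_i}h$, whereas you supply the Vieta argument directly (and correctly flag the need for $k_i\ge 2$).
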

\begin{proof}
It is clear that $q\equiv3$ (mod 4) so that $\F_q^\Box$ is a half-set of $\F_q$.
For $1\leq i\leq r$, let $S_i$ be the group of $k_i$-th roots of unity of $\F_q$.
Given that the $k_i$'s are mutually coprime, the product $S_1\cdot {\dots} \cdot S_r$ is the subgroup of $\F_q^*$ of order $k_1\cdot {\dots}\cdot k_r={q-1\over2}$,
hence $S_1\cdot {\dots} \cdot S_r=\F_q^\Box$. Also, $\{S_1,\dots,S_r\}$ is clearly a partial partition of $\F_q^\Box$.
It follows, by Proposition \ref{Andre}, that if ${\mathcal P}_i$ is the set of cosets of $S_i$ in $\F_q^\Box$, then
$\mathcal P_1$, ...,  $\mathcal P_r$ are the parallel classes of a $(v,\{k_1,\dots,k_r\})$-RPLS with point set $\F_q^\Box$.
Now note that each block of this RPLS is zero-sum since any coset of a non-trivial multiplicative subgroup of a finite field
is zero-sum (see, e.g., Fact 1 in \cite{BN1}). It follows that every block of $\mathcal P_i$ is zero-sum, hence
$\mathcal P_i$ is a $(v,k_i)$ Heffter system on $\F_q^\Box$.
The assertion follows.
\end{proof}

Now it is clear that Theorem \ref{k1...krMOHS} is an immediate consequence of Theorem \ref{planica} and Proposition \ref{MOHS=HS}.

As an example, we can apply Theorem \ref{planica} with $r=3$ and $(k_1,k_2,k_3)=(3,5,7)$ since $2k_1k_2k_3+1=211$ is a prime.
We obtain in this way a $(105,\{3,5,7\})$ Heffter space.

Note that  Corollary 3.2 in \cite{B} is essentially Theorem \ref{planica} applied in the case $r=2$.

Let us show that Theorem \ref{planica} allows us to obtain Heffter spaces of arbitrarily large order.

\begin{thm}
There exists a Heffter space of degree $r$ for any positive integer $r$.
\end{thm}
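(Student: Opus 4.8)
The plan is to apply Theorem~\ref{planica} and reduce the claim to a number-theoretic statement: for every $r$ there exist $r$ mutually coprime odd integers $k_1,\dots,k_r$ such that $2k_1\cdots k_r+1$ is a prime power. The simplest way to guarantee mutual coprimality is to take the $k_i$ to be distinct odd primes; then the product $k_1\cdots k_r$ is an odd squarefree number with exactly $r$ prime factors, and we only need $2k_1\cdots k_r+1$ to be prime (a prime power with exponent $1$ suffices).

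First I would fix $r$ and argue that there are infinitely many choices of an $r$-element set of odd primes whose doubled product is one more than a prime. The cleanest route is Dirichlet's theorem on primes in arithmetic progressions: pick any $r-1$ distinct odd primes $p_1,\dots,p_{r-1}$, set $P=p_1\cdots p_{r-1}$, and consider the arithmetic progression $2Px+1$ as $x$ ranges over the positive integers. Since $\gcd(2P,1)=1$, Dirichlet gives infinitely many primes of the form $2Px+1$. Now I would like to choose $x$ itself to be an odd prime $p_r\notin\{p_1,\dots,p_{r-1}\}$ so that $k_i=p_i$ are mutually coprime odd integers with $2k_1\cdots k_r+1=2Pp_r+1$ prime. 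This is where a little care is needed: Dirichlet's theorem gives primes $2Px+1$ but not directly with $x$ prime. One fix is to instead invoke the linear form in one variable differently — fix $r$ distinct odd primes $p_1,\dots,p_r$ outright and note that among the infinitely many $r$-tuples we are free to enlarge any one of them, so a counting/density argument or an explicit small example for the base case plus an inductive enlargement step will do.

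Actually the most economical write-up avoids heavy machinery entirely and proceeds by induction on $r$, which I expect the authors intend. For $r=1$, take $k_1=3$, so $2k_1+1=7$ is prime and Theorem~\ref{planica} gives a $(3,3)$ Heffter space (equivalently a $(3,3)$ Heffter system). For the inductive step, suppose $k_1,\dots,k_r$ are mutually coprime odd integers with $2k_1\cdots k_r+1$ a prime power; I want one more odd integer $k_{r+1}$, coprime to all the others, with $2k_1\cdots k_r k_{r+1}+1$ a prime power. Setting $N=2k_1\cdots k_r$, I consider $Nk+1$ for odd $k$ coprime to $k_1\cdots k_r$; since $\gcd(N,1)=1$, Dirichlet's theorem produces infinitely many primes of the form $Nk+1$, and by a standard cofactor argument (or by choosing $k$ along a suitable subprogression) one can force the multiplier $k$ to avoid the finitely many primes dividing $k_1\cdots k_r$ and to be odd, e.g.\ by taking $k\equiv 1$ modulo $2k_1\cdots k_r$ within that progression. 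Then $k_{r+1}=k$ works and Theorem~\ref{planica} yields a $\left(\tfrac{q-1}{2},\{k_1,\dots,k_{r+1}\}\right)$ Heffter space with $q=Nk+1$.

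The main obstacle — and the only real subtlety — is the bookkeeping in the inductive step: one must simultaneously arrange that the new multiplier is odd, is coprime to all previously chosen $k_i$, and makes $2k_1\cdots k_{r+1}+1$ prime. All three conditions are congruence conditions compatible with a single arithmetic progression with first term coprime to its modulus, so Dirichlet's theorem on primes in arithmetic progressions settles it; nothing deeper (no Linnik-type bound, no quantitative input) is required. It may be cleanest to package the needed fact as: \emph{for every finite set $T$ of odd primes there exists an odd prime $p\notin T$ such that $1+2p\prod_{t\in T}t$ is prime}, proved in one line from Dirichlet. With that lemma in hand the theorem follows immediately by induction together with Theorem~\ref{planica}.
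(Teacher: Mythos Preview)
Your inductive argument in the third paragraph is correct and, once unwound, is exactly what the paper does: with $N=2k_1\cdots k_r$, imposing $k\equiv 1\pmod{N}$ turns $Nk+1$ into the progression $N^2t+(N+1)$, and since $\gcd(N^2,N+1)=1$ Dirichlet supplies a prime $p=N^2t+(N+1)$; then $k_{r+1}=Nt+1$ is odd, coprime to each $k_i$, and $2k_1\cdots k_{r+1}+1=p$. The paper carries out precisely this step (non-inductively): it fixes any mutually coprime odd integers $k_1,\dots,k_{r-1}$, sets $m=2k_1\cdots k_{r-1}$, and applies Dirichlet to the progression $m^2n+(m+1)$ to produce $k_r=mn+1$.

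However, your final ``cleanest'' packaging is wrong. The claim that for every finite set $T$ of odd primes there is an odd \emph{prime} $p\notin T$ with $1+2p\prod_{t\in T}t$ also prime is \emph{not} a consequence of Dirichlet's theorem: it asks the two linear forms $x$ and $2Mx+1$ (with $M=\prod_{t\in T}t$) to be simultaneously prime, which is an instance of Dickson's conjecture and is open in general. You already spotted this obstruction in your second paragraph (``Dirichlet's theorem gives primes $2Px+1$ but not directly with $x$ prime''), so simply discard the last lemma and keep the subprogression argument---which never needed the new $k_{r+1}$ to be prime, only odd and coprime to the earlier $k_i$.
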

\begin{proof}
In view of Theorem \ref{planica},
it is enough to show that for any given $r$ there exists an $r$-tuple $(k_1,\dots,k_r)$ of mutually coprime odd integers such that $2k_1\cdot{\dots}\cdot k_r+1$ is a prime.
Take $r-1$ mutually coprime odd integers $k_1$, \dots, $k_{r-1}$ (for instance the first $r-1$ odd primes) and set $m=2k_1\cdot {\dots}\cdot k_{r-1}$.
It is obvious that we have $\gcd(m^2,m+1)=1$ and hence, by the theorem of Dirichlet, the arithmetic progression
$\{m^2n+(m+1) \ | \ n\in\mathbb{N}\}$ has infinitely many primes.  Take one of these primes $p$ so that we can write $p=m^2n+(m+1)=m(mn+1)+1$
for a suitable $n$, and set $k_r=mn+1$. Note that $k_r$ is coprime with $m$ and hence coprime with each $k_i$
with $i\in\{1,\dots,r-1\}$. Also note that $k_r$ is odd since $m$ is even and that we have
$p=2k_1\cdot{\dots}\cdot k_{r-1}k_r+1$.
\end{proof}

A stronger version of the above result will be obtained in the next section by using {\it difference methods}.

\section{Heffter spaces via difference packings}

Given a subset $B$ of an additive (resp. multiplicative) group $G$, the {\it list of differences} of $B$ is the
multiset $\Delta B$ consisting of all possible differences $x-y$ (resp. ratios $xy^{-1}$) with $(x,y)$ an ordered pair of distinct elements of $B$.
More generally, one defines the list of differences of a collection ${\mathcal F}=\{B_1,\dots,B_n\}$ of subsets of $G$ as the multiset
$\Delta {\mathcal F}:=\Delta B_1 \ \cup \ \dots \ \cup \ \Delta B_n$.
The collection $\mathcal F$ is said to be a {\it difference packing} in $G$ if $\Delta {\mathcal F}$ does not have repeated elements.
It is a {\it difference family of index $1$} if $\Delta\mathcal F$ is precisely the set of all non-identity elements of $G$ (see e.g. \cite{BJL,CD}).
Speaking of a $(G,\{k_1,\dots,k_n\})$ difference packing we mean a difference packing $\{B_1,\dots,B_n\}$ in $G$ with $|B_i|= k_i$ for $1\leq i\leq n$.
We will write $(G,k;n)$ instead of $(G,\{k_1,\dots,k_n\})$ when $k_i=k$ for $1\leq i\leq n$.
Note, in particular, that a $(\Z_v,k;n)$ difference packing can also be seen as a $(v,k,1)$ {\it optical orthogonal code}.
The unique member of a $(\Z_v,k;1)$ difference packing is said to be a $(v,k)$ {\it modular Golomb ruler} (MGR).
A $(v,k)$-MGR is said to be {\it resolvable} if $k$ is a divisor of $v$ and its elements are pairwise distinct modulo $k$, i.e., they
form a complete set of residues (mod $k$) (see \cite{BS22}).

Now recall that the {\it development} of a subset $B$ of a group $G$ is the collection $dev B$ consisting of all possible translates of $B$ in $G$, that is
$dev B=\{B\star g \ | \ g\in G\}$ where $\star$ is the additive or multiplicative operation of $G$.
More generally, the development of a collection ${\mathcal F}=\{B_1,\dots,B_n\}$ of subsets of $G$ is the multiset union $dev B_1 \ \cup \ \dots \ \cup \ dev B_n$.
It is very well known that the pair $(G,dev{\mathcal F})$ is a partial linear space (resp. a linear space) if and only if ${\mathcal F}$ is a difference packing
(resp. a difference family of index $1$).
In particular, if $B$ is a $(v,k)$-MGR, then $(\Z_v,dev B)$ is a symmetric $(v,k;k)$-configuration which is resolvable if and only if $B$ is resolvable as well (see Theorem 4.3 in \cite{BS22}).

Considering that our aim is to construct Heffter spaces, we are interested in resolvable partial linear spaces generated by a difference packing.
A complete classification of them is beyond the scope of this paper.
We limit ourselves to consider a special class of them.

\begin{defn}\label{HDP}
Let $q=2v+1$ be a prime power with $v$ odd, let $\mathcal F=\{B_1,\dots,B_n\}$ be a set of subsets of $\F_q^\Box$, and let $\phi$ be an isomorphism
between $\F_q^\Box$ and $\Z_{v}$. We say that $\mathcal F$ is a {\it $(\F_q^\Box,\{k_1,\dots,k_n\})$ Heffter difference packing} if the following conditions hold:
\begin{itemize}
\item[(H$_1$)]\quad $\phi({\mathcal F})$ is a $({v},\{k_1,\dots,k_n\})$ difference packing;
\item[(H$_2$)]\quad $\phi(B_i)$ is a resolvable $(v,k_i)$-MGR for $1\leq i\leq n$;
\item[(H$_3$)]\quad each $B_i$ is zero-sum in $\F_q$.
\end{itemize}
The only member of a $(\F_q^\Box,\{k\})$ Heffter difference packing will be called a {\it $(\F_q^\Box,k)$ Heffter ruler}.
\end{defn}

\begin{rem}\label{Heffterruler}
It is convenient to observe that $\mathcal F$ is a $(\F_q^\Box,k;n)$ Heffter difference packing if and only if
it is a $n$-set of $(\F_q^\Box,k)$ Heffter rulers whose lists of differences are pairwise disjoint.
\end{rem}

The above terminology is justified by the following.

\begin{thm}\label{HDP->HS}
If $q=2v+1$ is a prime power with $v$ odd, and there exists a $(\F_q^\Box,\{k_1,\dots,k_n\})$ Heffter difference packing, then there exists
a $(v,\{k_1^{k_1},\dots,k_n^{k_n}\})$ Heffter space. In the additional hypothesis that the difference packing is simple,
the resulting Heffter space is simple too.
\end{thm}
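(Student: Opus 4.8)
The plan is to build the Heffter space explicitly from the given Heffter difference packing $\mathcal F = \{B_1,\dots,B_n\}$ by developing each $B_i$ through the multiplicative group $\F_q^\Box$, and then grouping these translates into parallel classes using the resolvability of each modular Golomb ruler. First I would set $\phi:\F_q^\Box \to \Z_v$ to be the isomorphism from the definition, and work with the collection $\mathcal D = \bigcup_{i=1}^n \operatorname{dev} B_i$, where $\operatorname{dev} B_i = \{B_i\cdot g : g\in\F_q^\Box\}$. By condition (H$_1$), $\phi(\mathcal F)$ is a $(v,\{k_1,\dots,k_n\})$ difference packing in $\Z_v$, hence $\mathcal F$ itself is a difference packing in $\F_q^\Box$ (the list-of-differences condition is transported by the isomorphism), so $(\F_q^\Box, \mathcal D)$ is a partial linear space by the well-known correspondence recalled just before Definition \ref{HDP}. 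This accounts for the PLS part of the structure; note that $|\F_q^\Box| = v$, so the point set has the right size and every $B_i\cdot g$ is a block lying in the half-set $\F_q^\Box$ of the additive group $\F_q$.

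Next I would produce the resolution. Fix $i$. Condition (H$_2$) says $\phi(B_i)$ is a \emph{resolvable} $(v,k_i)$-MGR, i.e. $k_i \mid v$ and the elements of $\phi(B_i)$ form a complete system of residues modulo $k_i$; equivalently, writing $\psi_i:\F_q^\Box\to\Z_{k_i}$ for the composition of $\phi$ with reduction mod $k_i$ (a surjective homomorphism whose kernel $K_i$ has index $k_i$), the $k_i$ elements of $B_i$ lie in $k_i$ distinct cosets of $K_i$ in $\F_q^\Box$. The plan is to partition $\operatorname{dev} B_i$ into $v/k_i$ blocks according to the $K_i$-coset structure: for each of the $v/k_i$ cosets of $K_i$, collect exactly those translates $B_i\cdot g$ for which $g$ runs over that coset. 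Since $B_i$ meets every $K_i$-coset exactly once, as $g$ varies over a fixed coset of $K_i$ the translates $B_i\cdot g$ partition $\F_q^\Box$ (this is precisely Theorem 4.3 of \cite{BS22} transported through $\phi$, and I would cite it), so each such group of translates is a parallel class; doing this for every $i$ yields $\sum_{i=1}^n v/k_i$ parallel classes, of which exactly $k_i$ (one needs $k_i = v/(v/k_i)$ — wait, the count is: $\operatorname{dev} B_i$ has $v$ translates, each parallel class has $v/k_i$ translates, so there are $k_i$ parallel classes arising from $B_i$) have block size $k_i$. Thus the multiset of parallel classes is $\{k_1^{k_1},\dots,k_n^{k_n}\}$ as claimed, giving a $(v,\{k_1^{k_1},\dots,k_n^{k_n}\})$-RPLS.

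It remains to check the Heffter (zero-sum) condition and the simplicity claim. For the zero-sum condition: by (H$_3$) each $B_i$ is zero-sum in $\F_q$, and multiplying by $g\in\F_q^\Box\subseteq\F_q^*$ is an additive automorphism of $\F_q$, so each translate $B_i\cdot g$ is zero-sum as well; hence every block of every parallel class is a zero-sum $k_i$-subset of the half-set $\F_q^\Box$, which is exactly the requirement for each parallel class to be a Heffter system (Definition \ref{HeffterSystem}). Combining this with the RPLS structure above gives a Heffter space by Definition \ref{HS}. For simplicity: if each $B_i$ is simple, i.e. comes with an ordering whose partial sums are distinct, then ordering $B_i\cdot g$ by scaling that ordering by $g$ gives partial sums that are the $g$-multiples of the original partial sums, hence still distinct since $g$ is invertible; so every block of every parallel class inherits a simple ordering and the resulting Heffter space is simple. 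The main obstacle, and the only place where real care is needed, is the bookkeeping in the resolution step: one must verify cleanly that the $K_i$-coset grouping of $\operatorname{dev} B_i$ genuinely partitions $\F_q^\Box$ (using that $B_i$ is a transversal of $\Z_{k_i}$-cosets) and that the resulting block-size multiset is $\{k_1^{k_1},\dots,k_n^{k_n}\}$ rather than something else; everything else (PLS property, zero-sum, simplicity) is immediate from the definitions and the cited correspondence.
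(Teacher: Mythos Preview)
Your approach is essentially the same as the paper's: develop each $B_i$ multiplicatively over $\F_q^\Box$, use the isomorphism $\phi$ to transfer the difference-packing property and hence get a PLS, resolve $\operatorname{dev} B_i$ via the index-$k_i$ subgroup (your $K_i$ is the paper's $S_i$, the group of $k_i$-th powers), and finish with the trivial zero-sum and simplicity observations. The only blemish is the momentary slip where you write ``for each of the $v/k_i$ cosets of $K_i$'' --- $K_i$ has index $k_i$, so there are $k_i$ cosets each of size $v/k_i$ --- but you immediately catch and correct the count, and the partition argument you sketch (that $\{B_i\cdot g : g\in C\}$ partitions $\F_q^\Box$ for each coset $C$ of $K_i$, because $B_i$ is a transversal) is exactly what the paper does.
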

\begin{proof}
Let ${\mathcal F}=\{B_1,\dots,B_n\}$ be a $(\F_q^\Box,\{k_1,\dots,k_n\})$ Heffter difference packing.

Given any pair $(x,y)$ of elements of $\F_q^\Box$ we have $\phi(xy^{-1})=\phi(x)-\phi(y)$ since $\phi$ is a group isomorphism.
Thus we have
$\phi(\Delta B)=\Delta\phi(B)$ for every subset $B$ of $\F_q^\Box$ and then we can write
$$\phi(\Delta B_1 \ \cup \ \dots \ \cup \ \Delta B_n)=\phi(\Delta B_1) \ \cup \ \dots \ \cup \ \phi(\Delta B_n)=\Delta\phi(B_1) \ \cup \ \dots \ \cup \ \Delta\phi(B_n).$$
Thus, given that $\phi$ is bijective and that condition (H$_1$) says that $\Delta\phi(B_1) \ \cup \ \dots \ \cup \ \Delta\phi(B_n)$
does not have repeated elements, we can say that $\Delta B_1 \ \cup \ \dots \ \cup \ \Delta B_n$ is also free of repetitions.
This means that $\mathcal F$ is a full-fledged $(\F_q^\Box,\{k_1,\dots,k_n\})$ difference packing so that
$(\F_q^\Box,dev{\mathcal F})$ is a PLS whose point set $\F_q^\Box$ is a half-set of $\F_q$ since $q\equiv3$ (mod 4)
by assumption.

For $1\leq i\leq n$, condition (H$_2$) says that $k_i$ divides ${v}$ and that the elements of $\phi(B_i)$ form a complete set of residues (mod $k_i$).
Equivalently, they form a complete system of representatives for the cosets of the subgroup of $\Z_{v}$ of index $k_i$, that is $k_i\Z_{v}$.
For $i=1,\dots,n$, let $S_i$ be the subgroup of index $k_i$ of $\F_q^\Box$, that is the group of non-zero $k_i$th powers of $\F_q$.
Given that $\phi$ is a group isomorphism, we have that $B_i$ is a complete system of representatives for the cosets of $S_i$
in $\F_q^\Box$ for $1\leq i\leq n$.
It follows that
${\mathcal P}_i:=\{B_is \ | \ s\in S_i\}$ is a partition of $\F_q^\Box$, i.e.,
a parallel class of $(\F_q^\Box,dev{\mathcal F})$.
Take a complete system $T_i$ of representatives for the cosets of $S_i$ in $\F_q^\Box$ so that we have $\{st \ | \ s\in S_i; t\in T_i\}=\F_q^\Box$.
We can take, for instance, $T_i=\{g^j \ | \ 0\leq j\leq k_i-1\}$ with $g$ a generator of $\F_q^\Box$.
Since ${\mathcal P}_i$ is a parallel class, ${\mathcal P}_it:=\{B_ist \ | \ s\in S_i\}$ is a parallel class as well for every $t\in T_i$.
Thus we see that ${\mathcal R}_i:=\{{\mathcal P}_it \ | \ t\in T_i\}$ is a partition of $dev B_i$ into $k_i$ parallel classes.
We conclude that ${\mathcal R}:=\bigcup_{i=1}^n{\mathcal R}_i$ is a resolution of $(\F_q^\Box,dev{\mathcal F})$,
hence $(\F_q^\Box,dev{\mathcal F},{\mathcal R})$ is a $(v,\{k_1^{k_1},\dots,k_n^{k_n}\})$-RPLS.
Finally, any block $B$ of $dev \mathcal F$ is of the form $xB_i$ for a suitable $x\in\F_q^\Box$ and a suitable $i\in\{1,\dots,n\}$.
The elements of $B$ sum up to $xs$ where $s$ is the sum of the elements of $B_i$.
Thus $B$ is zero-sum since $s=0$ by condition (H$_3$) and then every member of ${\mathcal R}$ is a Heffter system.

Finally note that if $\mathcal F$ is simple, then every member of $dev\mathcal F$ is simple as well. The assertion follows.
\end{proof}

\begin{cor}\label{HDP->HS2}
A $(\F_q^\Box,k;n)$ Heffter difference packing gives rise to a $({q-1\over 2},k;kn)$ Heffter configuration
which is simple if the difference packing is simple as well. Also, this configuration can be extended to a $({q-1\over2},\{k^{kn},{q-1\over2k}\})$ Heffter space.
\end{cor}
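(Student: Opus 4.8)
The plan is to derive both assertions directly from Theorem~\ref{HDP->HS}, the second one by adjoining a single extra parallel class built from a multiplicative subgroup of $\F_q^\Box$.

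For the first assertion I would just specialize Theorem~\ref{HDP->HS} to $k_1=\dots=k_n=k$: a $(\F_q^\Box,k;n)$ Heffter difference packing is in particular a $(\F_q^\Box,\{k,\dots,k\})$ Heffter difference packing, hence it produces a $(v,\{k^{k},\dots,k^{k}\})$ Heffter space whose resolution consists of $kn$ parallel classes all of whose blocks have size $k$. Since $k\mid v=\frac{q-1}{2}$, this resolved PLS has $vn=\frac{v\cdot kn}{k}$ blocks, so it is a $(v,k;kn)$ configuration, i.e.\ a $(v,k;kn)$ Heffter configuration; and it is simple whenever the packing is, by the last sentence of Theorem~\ref{HDP->HS}.

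For the extension I would recall, from the proof of Theorem~\ref{HDP->HS}, that the point set of this configuration is $\F_q^\Box$ (a half-set of $\F_q$, since $q\equiv3$ (mod $4$)) and that its blocks are precisely the sets $xB_i$ with $x\in\F_q^\Box$ and $1\le i\le n$. Let $S$ be the subgroup of $\F_q^\Box$ of index $k$, so $|S|=\frac{q-1}{2k}$; this is a non-trivial group, since the existence of a single $(\F_q^\Box,k)$ Heffter ruler already forces $\frac{q-1}{2}=|\F_q^\Box|> k(k-1)$, whence $\frac{q-1}{2k}>k-1\ge2$. Let $\mathcal P_0$ be the set of the $k$ cosets of $S$ in $\F_q^\Box$. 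Then $\mathcal P_0$ partitions the point set into $k$ blocks of size $\frac{q-1}{2k}$, each of which is zero-sum because a coset of a non-trivial multiplicative subgroup of a finite field is zero-sum (Fact~1 in~\cite{BN1}); hence $\mathcal P_0$ is a $(v,\frac{q-1}{2k})$ Heffter system.

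The only substantive point is to verify that $\mathcal P_0$ is orthogonal to every block of the configuration, and this is where I expect what little difficulty there is. Suppose $xb,xb'\in Sc$ for some coset $Sc$ of $S$ and distinct $b,b'\in B_i$; then $b(b')^{-1}\in S$, i.e.\ $b$ and $b'$ lie in a common coset of $S$ in $\F_q^\Box$. Transporting through the isomorphism $\phi$ of Definition~\ref{HDP}, the subgroup $S$ corresponds to the index-$k$ subgroup $k\Z_v$ of $\Z_v$, while condition (H$_2$) asserts that the elements of $\phi(B_i)$ are pairwise distinct modulo $k$, i.e.\ lie in pairwise distinct cosets of $k\Z_v$; hence $b=b'$, a contradiction. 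Thus $|xB_i\cap Sc|\le1$ for all $x$, $i$ and all cosets $Sc$ of $S$. In particular no block of $\mathcal P_0$ coincides with a block of the configuration, adjoining the blocks of $\mathcal P_0$ keeps the incidence structure a PLS, and augmenting the given resolution by the single parallel class $\mathcal P_0$ resolves the enlarged structure; since every parallel class in sight is a Heffter system, the result is a $(\frac{q-1}{2},\{k^{kn},\frac{q-1}{2k}\})$ Heffter space. Everything other than this orthogonality check is routine bookkeeping.
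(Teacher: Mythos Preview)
Your proof is correct and follows essentially the same approach as the paper: specialize Theorem~\ref{HDP->HS}, then adjoin the parallel class of cosets of the index-$k$ subgroup $S$ of $\F_q^\Box$, using condition (H$_2$) to guarantee orthogonality. You are in fact slightly more explicit than the paper in justifying that $S$ is non-trivial (so its cosets are zero-sum), but the structure of the argument is the same.
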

\begin{proof}
Let ${\mathcal F}=\{B_1,\dots,B_n\}$ be a $(\F_q^\Box,k;n)$ Heffter difference packing.
Applying Theorem \ref{HDP->HS} we immediately get a $(v,k;kn)$ Heffter space $\mathcal S=(\F_q^\Box,dev{\mathcal F},{\mathcal R})$
that is a $({q-1\over 2},k;kn)$ Heffter configuration which is simple if $\mathcal F$ is simple as well.
Recall that the $k$ elements of any member of $\mathcal F$ are in pairwise distinct cosets of the subgroup
$S$ of $\F_q^\Box$ of index $k$. It follows that the $k$ elements of any block of $dev \mathcal F$ also belong to pairwise distinct cosets of $S$.
This means that any two distinct elements of  $\F_q^\Box$ lying in the same coset of $S$
are never collinear in $\mathcal S$. Thus, if ${\mathcal C}$ is the set of cosets of $S$ in $\F_q^\Box$, the pair $(\F_q^\Box,dev{\mathcal F} \ \cup \ {\mathcal C})$
is still a partial linear space. Also note that the blocks of ${\mathcal C}$ have size ${q-1\over2k}$ and form a parallel class of this space.
We conclude that the triple $(\F_q^\Box,dev{\mathcal F} \ \cup \ {\mathcal C},{\mathcal R} \ \cup \ \{{\mathcal C}\})$ is a $({q-1\over2},\{k^{kn},{q-1\over2k}\})$ Heffter space.
\end{proof}

\begin{ex}\label{35} Let us construct a symmetric $(35,5;5)$ Heffter configuration which is the same as a $(35,5;5)$-MOHS. For this, we need a $(\F_{71}^\Box,5)$ Heffter ruler.
First note that $3$ is a generator of $\Z_{71}^\Box$ so that the map
$\phi: 3^i \in \Z_{71}^\Box \longrightarrow i\in \Z_{35}$ is a group isomorphism.
Consider the 5-subset $B=\{3^0,3^{21},3^{27},3^{18},3^{34}\}$ of $\Z_{71}^\Box$ for which we obviously have
$\phi(B)=\{0,21,27,18,34\}$. Looking at the {\it difference table} of $\phi(B)$ below
\begin{center}
\begin{tabular}{|c|||c|c|c|c|c|c|c|c|c|c|c|c|c|c}
\hline {$$} & {\scriptsize$0$} & {\scriptsize$21$} & {\scriptsize$27$} & {\scriptsize$18$} & {\scriptsize$34$}   \\
\hline\hline\hline {\scriptsize$0$} & $\bullet$ & $14$ & $8$ & $17$ & $1$  \\
\hline {\scriptsize$21$} & $21$ & $\bullet$ & $29$ & $3$ & $22$  \\
\hline {\scriptsize$27$} & $27$ & $6$ & $\bullet$ & $9$ & $28$  \\
\hline {\scriptsize$18$} & $18$ & $32$ & $26$ & $\bullet$ & $19$  \\
\hline {\scriptsize$34$} & $34$ & $13$ & $7$ & $16$ & $\bullet$  \\
\hline
\end{tabular}\quad\quad\quad
\end{center}
we see that $\Delta\phi(B)$ does not have repeated elements, hence $\phi(B)$ is a $(35,5)$-MGR.

It is readily seen that the elements of $\phi(B)$ are pairwise distinct modulo 5 so that $\phi(B)$ is resolvable.
Finally, noting that $B=\{1, 25, 49, 43, 24\}$, we see that $B$ is zero-sum in $\Z_{71}$.
We conclude that $B$ is a $(\Z_{71}^\Box,5)$ Heffter ruler.

Following the instructions given in the proof of Theorem \ref{HDP->HS} we find a symmetric $(35,5;5)$ Heffter configuration
with point set $\Z_{71}^\Box$ and resolution ${\mathcal R}=\{\mathcal {P}_1$, \dots, $\mathcal {P}_5\}$ as follows:

\small
\medskip\noindent
\begin{center}
\begin{tabular}{|l|c|r|c|r|c|r|c|r|c|r|c|r|}
\hline {\quad\quad\quad$\mathcal {P}_1$}    \\
\hline $\{1, 24, 25, 43, 49\}$\\
\hline $\{45, 15, 60, 18, 4\}$  \\
\hline $\{37, 36, 2, 29, 38\}$\\
\hline $\{32, 58, 19, 27, 6\}$  \\
\hline $\{20, 54, 3, 8, 57\}$\\
\hline $\{48, 16, 64, 5, 9\}$ \\
\hline $\{30, 10, 40, 12, 50\}$ \\
\hline
\end{tabular}\quad
\begin{tabular}{|l|c|r|c|r|c|r|c|r|c|r|c|r|}
\hline {\quad\quad\quad$\mathcal{P}_2$}    \\
\hline $\{49, 40, 18, 48, 58\}$  \\
\hline $\{4, 25, 29, 30, 54\}$\\
\hline $\{38, 60, 27, 1, 16\}$ \\
\hline $\{6, 2, 8, 45, 10\}$\\
\hline $\{57, 19, 5, 37, 24\}$  \\
\hline $\{9, 3, 12, 32, 15\}$\\
\hline $\{50, 64, 43, 20, 36\}$\\
\hline
\end{tabular}\quad
\begin{tabular}{|l|c|r|c|r|c|r|c|r|c|r|c|r|}
\hline {\quad\quad\quad$\mathcal{P}_3$}    \\
\hline $\{58, 43, 30, 9, 2\}$\\
\hline $\{54, 18, 1, 50, 19\}$\\
\hline $\{16, 29, 45, 49, 3\}$ \\
\hline $\{10, 27, 37, 4, 64\}$  \\
\hline $\{24, 8, 32, 38, 40\}$\\
\hline $\{15, 5, 20, 6, 25\}$\\
\hline $\{36, 12, 48, 57, 60\}$\\
\hline
\end{tabular}\quad
\begin{tabular}{|l|c|r|c|r|c|r|c|r|c|r|c|r|}
\hline {\quad\quad\quad$\mathcal{P}_4$}    \\
\hline $\{2, 48, 50, 15, 27\}$\\
\hline $\{19, 30, 49, 36, 8\}$\\
\hline $\{3, 1, 4, 58, 5\}$ \\
\hline $\{64, 45, 38, 54, 12\}$  \\
\hline $\{40, 37, 6, 16, 43\}$\\
\hline $\{25, 32, 57, 10, 18\}$\\
\hline $\{60, 20, 9, 24, 29\}$\\
\hline
\end{tabular}\quad
\begin{tabular}{|l|c|r|c|r|c|r|c|r|c|r|c|r|}
\hline {\quad\quad\quad$\mathcal{P}_5$}    \\
\hline $\{27, 9, 36, 25, 45\}$\\
\hline $\{8, 50, 58, 60, 37\}$\\
\hline $\{5, 49, 54, 2, 32\}$ \\
\hline $\{12, 4, 16, 19, 20\}$  \\
\hline $\{43, 38, 10, 3, 48\}$\\
\hline $\{18, 6, 24, 64, 30\}$\\
\hline $\{29, 57, 15, 40, 1\}$\\
\hline
\end{tabular}
\end{center}
\end{ex}

\medskip\normalsize
The $(35,5;5)$ Heffter configuration above has density ${10\over17}\simeq0.588$. It is the second most dense
Heffter configuration constructed in this paper. Adding the set
\begin{center}
\begin{tabular}{|l|c|r|c|r|c|r|c|r|c|r|c|r|}
\hline {\quad\quad\quad$\mathcal{C}$}    \\
\hline $\{1, 45, 37, 32, 20, 48, 30\}$\\
\hline $\{49, 4, 38, 6, 57, 9, 50\}$\\
\hline $\{58, 54, 16, 10, 24, 15, 36\}$ \\
\hline $\{2, 19, 3, 64, 40, 25, 60\}$  \\
\hline $\{27, 8, 5, 12, 43, 18, 29\}$\\
\hline
\end{tabular}
\end{center}

\medskip\noindent
of all the cosets of the subgroup of $\Z_{71}^\Box$ of order 7, we get a $(35,\{5^5,7\})$ Heffter space. This is the densest Heffter space that
we have at this moment; its density is ${13\over17}\simeq0.7647$.

\medskip
From now on, given a prime power $q\equiv1$ (mod $e$), the subgroup of $\F_q^*$ of
index $e$ will be denoted by $C^e$. Thus, for $q$ odd, $C^2$ is nothing but $\F_q^\Box$. Given a primitive element $\rho$ of $\F_q$, the set of cosets of $C^e$ in $\F_q^*$
is $\{\rho^iC^e \ | \ 0\leq i\leq e-1\}$. As it is standard, the coset $\rho^iC^e$ will be denoted by $C^e_i$.
Note that we have $C^e_i\cdot C^e_j=C^e_{i+j \ (mod \ e)}$. Also note that if $k$ is a divisor of ${q-1\over2}$, then the cosets of $C^{2k}$
in $\F_q^\Box$ are $C^{2k}_{2i}$ with $0\leq i\leq k-1$.

We will need the following lemma which is a special case of Theorem 1.1 in \cite{BP}.
\begin{lem}\label{BP}
Let $q\equiv1$ $($mod $e)$ be a prime power, let
$(i,j)\in \Z_e\times\Z_e$, and let
$s$ be a non-zero element of $\F_q$. Then the inequality
$$\bigl{|}\{x\in C^e_i \ : \ x-s\in C^e_j\}\bigl{|}>t$$
is guaranteed at least for $q>Q(e,t)$ where
$$Q(e,t)={1\over4}\biggl{[}(e-1)^2+\sqrt{(e-1)^4+4e(et+2)}\biggl{]}^2.$$
\end{lem}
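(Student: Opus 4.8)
\emph{Proof proposal.} Write $N:=|\{x\in C^e_i : x-s\in C^e_j\}|$. The plan is to evaluate $N$ by the classical cyclotomic/character-sum technique and then to read off the threshold on $q$ from a quadratic inequality in $\sqrt q$.

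Fix a primitive element $\rho$ of $\F_q$. Since $e\mid q-1$ there are exactly $e$ multiplicative characters $\chi$ of $\F_q$ with $\chi^e=\varepsilon$, and for $y\in\F_q^*$ one has $\frac1e\sum_{\chi^e=\varepsilon}\chi(y\rho^{-a})=1$ if $y\in C^e_a$ and $0$ otherwise. Because $x\in C^e_i$ forces $x\neq0$ and $x-s\in C^e_j$ forces $x\neq s$, I would restrict the count to $x\in\F_q\setminus\{0,s\}$, insert the two indicator functions and interchange the order of summation to obtain
$$N=\frac1{e^2}\sum_{\chi^e=\varepsilon}\sum_{\psi^e=\varepsilon}\chi(\rho^{-i})\psi(\rho^{-j})\sum_{x\in\F_q\setminus\{0,s\}}\chi(x)\psi(x-s).$$
The pair $\chi=\psi=\varepsilon$ contributes the main term $\frac{q-2}{e^2}$. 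For each of the $e-1$ pairs with $\chi=\varepsilon\neq\psi$ the inner sum is $\sum_{x\neq0,s}\psi(x-s)=-\psi(-s)$, of modulus $1$; symmetrically for the $e-1$ pairs with $\psi=\varepsilon\neq\chi$, where the inner sum is $-\chi(s)$. For the $(e-1)^2$ pairs with both $\chi$ and $\psi$ nontrivial, the substitution $x=sy$ turns the inner sum into $\chi(s)\psi(s)\sum_{y\in\F_q}\chi(y)\psi(y-1)$, which up to a root of unity is a Jacobi sum: by Weil's estimate (equivalently the Gauss-sum identity $J(\chi,\psi)=g(\chi)g(\psi)/g(\chi\psi)$) it has modulus $\sqrt q$ when $\chi\psi\neq\varepsilon$ and modulus $1$ in the remaining $e-1$ cases $\psi=\chi^{-1}$. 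Writing $N=\frac{q-2}{e^2}+\theta$ and using $\sqrt q\geq1$ to absorb the unit-modulus contributions, this gives
$$|\theta|\leq\frac{3(e-1)+(e-1)(e-2)\sqrt q}{e^2}\leq\frac{(e-1)^2\sqrt q+2(e-1)}{e^2}.$$

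From $N\geq\frac{q-2}{e^2}-|\theta|$ it follows that $N>t$ holds as soon as $q-(e-1)^2\sqrt q>e^2t+2e$. Putting $y=\sqrt q$, this is the quadratic inequality $y^2-(e-1)^2y-(e^2t+2e)>0$, which holds precisely when $y$ exceeds the larger root $\tfrac12\bigl[(e-1)^2+\sqrt{(e-1)^4+4(e^2t+2e)}\bigr]$; squaring and using $4(e^2t+2e)=4e(et+2)$ yields exactly $q>Q(e,t)$.

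I expect the only genuinely substantive step to be the appeal to Weil's bound controlling the $(e-1)(e-2)$ Jacobi-type sums $\sum_{x}\chi(x)\psi(x-s)$ with $\chi,\psi,\chi\psi$ all nontrivial; everything else is bookkeeping over the group of characters of order dividing $e$ together with solving a quadratic. Since Lemma~\ref{BP} is stated as a specialization of Theorem~1.1 of \cite{BP}, one could alternatively just invoke that result; the sketch above indicates how the explicit function $Q(e,t)$ arises from the character-sum count.
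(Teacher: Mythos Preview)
The paper does not prove this lemma at all; it merely records it as a special case of Theorem~1.1 in \cite{BP} and moves on. Your proposal goes further and supplies a self-contained character-sum derivation, and the computation is correct: the main term $(q-2)/e^2$, the $3(e-1)$ unit-modulus side terms, and the $(e-1)(e-2)$ Jacobi sums of modulus $\sqrt q$ combine exactly as you say, and your relaxation $3(e-1)+(e-1)(e-2)\sqrt q\le (e-1)^2\sqrt q+2(e-1)$ (valid since $\sqrt q\ge1$) is precisely what makes the resulting quadratic in $\sqrt q$ have constant term $e^2t+2e=e(et+2)$, yielding $Q(e,t)$ on the nose. So your route is strictly more informative than the paper's, which simply points to the source; what the citation buys is brevity and the fact that \cite{BP} handles the more general situation with several translates, whereas you only need (and only treat) the single-translate case used here.
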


The following result is very technical but crucial to prove our main result in this section, that is Corollary \ref{asymptotic3}.
Keep in mind the meaning of ``simple" explained in Definition \ref{simple}.

\begin{thm}\label{asymptotic}
Given any odd $k\geq3$ and any $n\geq0$, there exists a simple
$(\F_q^\Box,k;n)$ Heffter difference packing for every prime power  $q\equiv2k+1$ $($mod $4k)$ greater than $8k^5n$.
\end{thm}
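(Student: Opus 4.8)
The goal is to build, inside $\F_q^\Box$, a family $\mathcal F=\{B_1,\dots,B_n\}$ of $(\F_q^\Box,k)$ Heffter rulers with pairwise disjoint lists of differences, each of which is simple. By Remark \ref{Heffterruler} it suffices to produce the $B_i$ one at a time, greedily, making sure at step $i$ that the new ruler avoids all ratios already used by $B_1,\dots,B_{i-1}$ and also satisfies conditions (H$_2$), (H$_3$) and simplicity. The congruence $q\equiv 2k+1\pmod{4k}$ is exactly what makes $k$ an odd divisor of $\tfrac{q-1}{2}$ with $\gcd(k,\tfrac{q-1}{2k})$ behaving well, so that the subgroup $C^{2k}$ has index $k$ in $\F_q^\Box=C^2$ and the $k$ cosets of $C^{2k}$ inside $C^2$ are exactly $C^{2k}_0,C^{2k}_2,\dots,C^{2k}_{2(k-1)}$. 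The plan is to choose each $B_i$ to be a transversal of these $k$ cosets, i.e. $B_i=\{b_{i,0},\dots,b_{i,k-1}\}$ with $b_{i,m}\in C^{2k}_{2m}$; this automatically gives (H$_2$) (the elements lie in distinct cosets of the index-$k$ subgroup, hence are distinct mod $k$ under the isomorphism $\phi$) and also forces the list of differences of $B_i$ to contain, from each ordered pair $(b_{i,m},b_{i,m'})$, a ratio lying in a prescribed coset $C^{2k}_{2(m-m')}$ — this bookkeeping by cosets is what will let us keep the lists disjoint.

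Next I would normalize. Since multiplying a ruler by an element of $C^{2k}$ preserves all three Heffter conditions and the list of differences, I may assume $b_{i,0}=1$ for every $i$. Write $b_{i,m}=\rho^{2m}c_{i,m}$ with $c_{i,m}\in C^{2k}$ where $\rho$ is a fixed primitive root; the free parameters are the $c_{i,m}$ for $1\le m\le k-1$. The zero-sum condition (H$_3$), $\sum_m b_{i,m}=0$, is a single linear constraint; I would handle it by fixing $b_{i,k-1}$ to be whatever value makes the sum vanish, provided that forced value lands in the correct coset $C^{2k}_{2(k-1)}$ — and here is where a counting/cyclotomy argument via Lemma \ref{BP} enters: one shows that for $q$ large enough one can choose $b_{i,1},\dots,b_{i,k-2}$ so that $-\bigl(1+\sum_{m=1}^{k-2}b_{i,m}\bigr)$ does lie in $C^{2k}_{2(k-1)}$, while simultaneously all the pairwise ratios $b_{i,m}b_{i,m'}^{-1}$ are new (not previously used) and all partial sums are distinct. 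Each such requirement excludes only $O(k)$ cosets' worth, or after the previous $n-1$ rulers at most $O(k^2 n)$ individual values, for each coordinate; Lemma \ref{BP} with $e=2k$ and threshold $t$ of order $k^2 n$ guarantees enough surviving choices precisely when $q>Q(2k,t)$, and a short estimate shows $Q(2k,\,c k^2 n)<8k^5 n$ for the relevant constant, which is where the stated bound $q>8k^5n$ comes from.

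Concretely the induction runs as follows. Suppose simple Heffter rulers $B_1,\dots,B_{i-1}$ with pairwise disjoint difference lists have been built; let $U$ be the set of ratios already used, so $|U|\le (i-1)k(k-1)\le nk^2$. Choose $b_{i,1}\in C^{2k}_2$ avoiding the finitely many forbidden values coming from (a) ratios $b_{i,1}\in U$ and $b_{i,1}^{-1}\in U$, and (b) the partial-sum collision $1+b_{i,1}=1$; by Lemma \ref{BP} applied to the pair $(C^{2k}_2,C^{2k}_0)$ with a threshold absorbing these $O(nk^2)$ exclusions this is possible for $q>8k^5n$. Having chosen $b_{i,1},\dots,b_{i,m-1}$, choose $b_{i,m}\in C^{2k}_{2m}$ avoiding: all ratios $b_{i,m}b_{i,m'}^{-1}$ and their inverses lying in $U$ for $m'<m$ (finitely many values in each coset, controlled by Lemma \ref{BP}), all partial-sum collisions $\sum_{j\le m} b_{i,j}=\sum_{j\le m'} b_{i,j}$ for $m'<m$ (again finitely many forbidden $b_{i,m}$), and, when $m=k-1$, the extra demand that $b_{i,k-1}$ be forced to the zero-sum value — which I realize must be arranged one step earlier: at $m=k-1$ I instead verify that the \emph{unique} value $b_{i,k-1}:=-1-\sum_{j=1}^{k-2}b_{i,j}$ happens to lie in $C^{2k}_{2(k-1)}$ and violates none of the finitely many exclusions, and I ensure this by choosing $b_{i,k-2}$ (one dimension of freedom) so that $-1-\sum_{j=1}^{k-2}b_{i,j}\in C^{2k}_{2(k-1)}$, which is exactly a statement of the form ``$x\in C^{2k}_{2(k-2)}$ with $x-s\in C^{2k}_{2(k-1)}$ for some shift $s$'', handled by Lemma \ref{BP}. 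Finally record the new $k(k-1)$ ratios into $U$ and continue. At the end $\mathcal F=\{B_1,\dots,B_n\}$ satisfies (H$_1$) (difference lists disjoint, by construction, hence $\phi(\mathcal F)$ is a difference packing), (H$_2$) (transversals of the index-$k$ subgroup), (H$_3$) (zero-sum by the forced last coordinate), and simplicity (distinct partial sums), so it is the desired simple $(\F_q^\Box,k;n)$ Heffter difference packing.

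The main obstacle is the quantitative bookkeeping in the induction: one must verify that at every one of the $k$ coordinates of every one of the $n$ rulers, the number of forbidden values in the relevant cyclotomic coset is small enough — of order at most a fixed polynomial in $k$ times $n$ — so that a single application of Lemma \ref{BP} with $e=2k$ and threshold $t\asymp k^2 n$ clears them, and that the resulting $Q(2k,t)$ is genuinely below $8k^5n$. The delicate point is the last-coordinate / zero-sum step, where the value is not free but forced, so the freedom has to be ``spent'' one coordinate earlier and phrased as a coset-incidence condition of exactly the shape $\{x\in C^e_i : x-s\in C^e_j\}$ that Lemma \ref{BP} governs; making the shift $s$ and the indices $i,j$ come out right (using the congruence $q\equiv 2k+1\pmod{4k}$ to guarantee $2(k-1)\not\equiv 2(k-2)\pmod{2k}$ and that $-1\in C^{2k}_0$ or not as needed) is the crux, and everything else is routine counting.
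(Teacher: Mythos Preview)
Your plan is correct and matches the paper's proof almost exactly: build the new ruler greedily as a transversal of the cosets $C^{2k}_{2m}$ of $C^{2k}$ in $\F_q^\Box$, pick $b_0,\dots,b_{k-3}$ by a straight size count, and then use Lemma \ref{BP} with $e=2k$ to choose $x=b_{k-2}\in C^{2k}_{2(k-2)}$ so that the forced last element $-x-s$ lands in $C^{2k}_{2(k-1)}$ while $x$ avoids a finite list of forbidden values of size $<k^2(k-1)n$. On the one point you left open: the congruence $q\equiv 2k+1\pmod{4k}$ makes $\tfrac{q-1}{2k}$ odd, so $-1=\rho^{(q-1)/2}\in C^{2k}_k$ (not $C^{2k}_0$), which rewrites the last-coset condition as $x+s\in C^{2k}_{k-2}$ and puts it exactly in the shape $\{x\in C^{2k}_{2k-4}:x+s\in C^{2k}_{k-2}\}$ that Lemma \ref{BP} handles; the paper's forbidden-value tally is actually of order $k^3n$ rather than $k^2n$, but one still checks $Q(2k,k^2(k-1)n)<8k^5n$.
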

\begin{proof}
First note that $q\equiv2k+1$ (mod $4k$) implies that $q\equiv1$ (mod $k$) and that $q\equiv3$ (mod 4) since $k$ is odd.

We proceed by induction on $n$. The assertion is obvious for $n=0$; the empty family can be considered as a $(\F_q^\Box,k;0)$ Heffter difference packing.
Assume the assertion true for $n-1$ and let $q\equiv 2k+1$ (mod $4k$) be a prime power with $q>8k^5n$ so that, obviously, $q>8k^5(n-1)$. Then, by induction,
there exists a simple $(\F_q^\Box,k;n-1)$ Heffter difference packing ${\mathcal F}$.
In view of Remark \ref{Heffterruler} it is enough to prove that
there exists a simple $(\F_q^\Box,k)$ Heffter ruler $B$ such that $\Delta B$ is disjoint with $\Delta{\mathcal F}$
so that ${\mathcal F} \ \cup \ \{B\}$ will be a $(\F_q^\Box,k;n)$ Heffter difference packing.

The required ruler $B$ will be constructed using the following strategy. We first construct a simple $(k-2)$-subset $B^*=\{b_0, b_1, \dots, b_{k-3}\}$ of $\F_q^\Box$
such that:
\begin{itemize}
\item[$(R_1)$]\quad $b_i\in C^{2k}_{2i}$ for $0\leq i\leq k-3$;

\smallskip
\item[$(R_2)$]\quad $\Delta B^* \ \cup \ \Delta{\mathcal F}$ does not have repeated elements;

\smallskip
\item[$(R_3)$]\quad no partial sum of $B^*$ is null.
\end{itemize}
After that, let us denote by $s$ the sum of all the elements of $B^*$ and then look for a field element $x$ such that:
\begin{itemize}
\item[$(R_4)$]\quad $x\in C^{2k}_{2(k-2)}$;

\medskip
\item[$(R_5)$]\quad $-x-s\in C^{2k}_{2(k-1)}$;

\medskip
\item[$(R_6)$]\quad the following inequalities hold:

\smallskip $\begin{array}{ccccccc}

\smallskip
(R_{6,1}) &&b_ix\neq-b_j(x+s) &&&& \hfill{\rm for} \ (i,j)\in\{0,1,\dots,k-3\}^2  \\

\smallskip
(R_{6,2}) &&{x(x+s)}\neq-b_ib_j &&&& \hfill{\rm for} \  0\leq i\leq j\leq k-3  \\

\smallskip
(R_{6,3}) &&x^2\neq b_ib_j &&&& \hfill{\rm for} \  0\leq i\leq j\leq k-3   \\

\smallskip
(R_{6,4}) &&(x+s)^2\neq b_ib_j &&&& \hfill{\rm for} \  0\leq i\leq j\leq k-3   \\

\smallskip
(R_{6,5}) &&x^2\neq -b_i(x+s) &&&& \hfill{\rm for} \  0\leq i\leq k-3   \\

\smallskip
(R_{6,6}) &&(x+s)^2\neq b_ix &&&& \hfill{\rm for} \  0\leq i\leq k-3   \\


\smallskip
(R_{6,7}) &&{x}\neq b_i\delta &&&& \hfill{\rm for} \ 0\leq i\leq k-3 \ \ {\rm and} \ \ \delta\in \Delta B^* \ \cup \ \Delta{\mathcal F} \\

\smallskip
(R_{6,8}) &&x+s\neq -b_i\delta &&&&\hfill{\rm for} \  0\leq i\leq k-3  \ \ {\rm and} \ \ \delta\in \Delta B^* \ \cup \ \Delta{\mathcal F} \\

\smallskip
(R_{6,9}) &&x+s\neq -\delta x &&&& \hfill{\rm for} \ \delta\in \Delta B^* \ \cup \ \Delta{\mathcal F} \\

\smallskip
(R_{6,10}) &&x+s\neq \sum_{j=0}^ib_j &&&& \hfill{\rm for} \  0\leq i\leq k-3.  \\
\end{array}$
\end{itemize}

Let us show that all these conditions guarantee that
\begin{equation}\label{goodB}
\mbox{$B:=B^* \ \cup \ \{x,-x-s\}$ is a simple $(\F_q^\Box,k)$ Heffter ruler with $\Delta B \ \cap \ \Delta{\mathcal F}=\emptyset$}
\end{equation}
as required.

We have $$\Delta B=\Delta B^* \ \cup \ \Delta'\quad {\rm with} \quad \Delta'=\biggl{\{}{x\over b_i},{b_i\over x},{-x-s\over b_i},{b_i\over -x-s} \ \biggl{|} \ 0\leq i\leq k-3\biggl{\}} \ \cup \ \biggl{\{}{x\over -x-s},{-x-s\over x}\biggl{\}}.$$
We know that $\Delta B^*$ does not have repeated elements because of $(R_2)$ and it is easily seen that
inequalities $(R_{6,1}), \dots, (R_{6,6})$ essentially say that
$\Delta'$ does not have repeated elements. Also, $(R_{6,7})$, $(R_{6,8})$, $(R_{6,9})$ assure that $\Delta B^*$ and $\Delta'$ are disjoint.
It follows that $\Delta B$ does not have repeated elements, hence $\phi(B)$ is a $({q-1\over 2},k)$-MGR
for any group isomorphism $\phi: \F_q^\Box \longrightarrow \Z_v$.

The cosets of $C^{2k}$ in $\F_q^\Box$ are $C^{2k}_{2i}$ with $i=0,1,\dots,k-1$.
Hence $B$ is a complete system of representatives for the cosets of $C^{2k}$ in $\F_q^\Box$ by conditions $(R_1)$, $(R_4)$, $(R_5)$.
This is equivalent to say that the elements of $\phi(B)$
form a complete set of residues modulo $k$. Thus $\phi(B)$ is resolvable.

Condition $(R_3)$ implies that $B$ is zero-sum and
inequality $(R_{6,10})$ guarantees that $B$ is simple.
Hence $B$ is a simple $(\F_q^\Box,k)$ Heffter ruler.

Finally, inequalities $(R_{6,7})$, $(R_{6,8})$, $(R_{6,9})$ also guarantee that $\Delta' \ \cap \ \Delta{\mathcal F}=\emptyset$ so that
$\Delta B\ \cap \ \Delta{\mathcal F}=\emptyset$ by $(R_2)$ and then (\ref{goodB}) follows.

It might seem that we also needed the inequalities $x(x+s)\neq-b_ix$ and $x(x+s)\neq b_i(x+s)$ with $0\leq i\leq k-3$,
and $(x+s)^2\neq x^2$, the latter in order to avoid that the last two elements of $\Delta'$ coincide.
As a matter of fact they should be redundant.
Indeed $x(x+s)\neq-b_ix$ is already contained in the set of inequalities $(R_{6,4})$.
Also, $x(x+s)\neq b_i(x+s)$ certainly holds otherwise we would have either $x=-s$ against $(R_5)$ or
$x=b_i$ against $(R_1),(R_4)$. Finally, $(x+s)^2=x^2$ would mean either $x=-x-s$ which is in conflict with $(R_4),(R_5)$, or $s=0$ against $(R_3)$.

And then all we have to do is prove that a simple set $B^*=\{b_0,b_1,\dots,b_{k-3}\}$ satisfying $(R_1)$,  $(R_2)$,  $(R_3)$
and an element $x$ satisfying $(R_4),(R_5),(R_6)$ actually exist.

The set $B^*$ can be determined as follows. Start taking $b_{0}$ in $C^{2k}_0$ arbitrarily and then take the other elements
$b_{1}$, $b_{2}$, \dots, $b_{k-3}$ iteratively, one by one, according to the rule that
once that $b_{j-1}$ has been chosen, we pick $b_{j}$ arbitrarily in the set
$$C^{2k}_{2j}\setminus(\Delta{\mathcal F} \ \cup \ \Delta_{j-1} \ \cup \ S_{j-1})$$
where $\Delta_{j-1}=\Delta\{b_{0},b_{1},\dots,b_{j-1}\}$ and $S_{j-1}$ is the set of all elements of the form $-\sum_{h=\alpha}^\beta b_h$
with $0\leq \alpha\leq\beta\leq j-1$.
This is possible since $C^{2k}_{2j}$ has size ${q-1\over2k}$ whereas
$\Delta{\mathcal F} \ \cup \ \Delta_{j-1} \ \cup \ S_{j-1}$ has size at most equal to $k(k-1)(n-1)+j(j-1)+{j(j-1)\over2}$ which is certainly less than ${q-1\over2k}$
in view of the hypothesis $q>8k^5n$.

Note that $(R_5)$ can be rewritten as $x+s\in C^{2k}_{k-2}$ since $-1\in C^{2k}_k$.
Hence, an element $x$ satisfies conditions $(R_4),(R_5),(R_6)$ if and only if it belongs to the set
$$X=\{x\in C^{2k}_{2k-4}\setminus F \ : \ x+s\in C^{2k}_{k-2}\}$$
where $F$ is the set of values forbidden by condition $(R_6)$.
So we have to prove that $X$ is not empty or, equivalently, that the set
$$X'=\{x\in C^{2k}_{2k-4} \ : \ x+s\in C^{2k}_{k-2}\}$$
has size greater than $|F \ \cap \ C^{2k}_{2k-4}|$.

Let us denote by $F_i$  the set of values forbidden by the set of inequalities $(R_{6,i})$
for $1\leq i\leq 10$,
and let us show that $|F_{i} \ \cap \ C^{2k}_{2k-4}|$ is bounded above by $u_i$ as indicated  in the following table.
\begin{center}
\begin{tabular}{|c||c|}
\hline {$i$} & $u_i$    \\
\hline
\hline {$1$} & $(k-2)^2$    \\
\hline {$2$} & $(k-2)(k-1)$    \\
\hline {$3$} & $(k-2)(k-1)$    \\
\hline {$4$} & $(k-2)(k-1)$    \\
\hline {$5$} & $2(k-2)$    \\
\hline {$6$} & $2(k-2)$    \\
\hline {$7$} & $k-2+(k-1)(n-1)$    \\
\hline {$8$} & $(k-2)[(k-2)(k-3)+k(k-1)(n-1)]$    \\
\hline {$9$} & $(k-2)(k-3)+k(k-1)(n-1)$    \\
\hline {$10$} & $k-2$    \\
\hline
\end{tabular}
\end{center}
For $i\neq 7$, we have got $|F_{i} \ \cap \ C^{2k}_{2k-4}|\leq u_i$ by simply counting the number of inequalities in $(R_{6,i})$ taking into account their degrees.
For instance,
$(R_{6,1})$ is a set of $(k-2)^2$ inequalities of degree 1. Hence $|F_1|$ and, a fortiori, $|F_1 \ \cap \ C^{2k}_{2k-4}|$ are at most equal to $(k-2)^2=u_1$.
Analogously, $(R_{6,2})$ is a set of ${(k-2)(k-1)\over2}$ inequalities of degree 2. Hence
$|F_2|$ and, a fortiori, $|F_2 \ \cap \ C^{2k}_{2k-4}|$ are at most equal to $2\cdot{(k-2)(k-1)\over2}=u_2$.

Now we explain how we have got $u_7$.
We have
$$F_7=\{b_i\delta \ | \ 0\leq i\leq k-3; \delta\in \Delta B^* \ \cup \ \Delta{\mathcal F}\}$$
but $F_7 \ \cap \ C^{2k}_{2k-4}$ is considerably smaller for the following reasons.
First observe that we have
\begin{equation}\label{u7}
b_i\delta \in C^{2k}_{2k-4} \Longleftrightarrow \delta\in C^{2k}_{2k-4-2i}
\end{equation}
since $b_i$ has been taken in $C^{2k}_{2i}$.
The fact that the elements of every member of $\mathcal F$ are in pairwise distinct cosets of
$C^{2k}$ in $\F_q^\Box$ implies that $\Delta{\mathcal F}$ is evenly distributed over these cosets.
It follows that $\Delta{\mathcal F}$ contains precisely ${|\Delta{\mathcal F}|\over k}=(k-1)(n-1)$ elements
in each of these cosets.
Analogously, the fact that the elements of $B^*$ are in pairwise distinct cosets of
$C^{2k}$ in $\F_q^\Box$ implies that $\Delta B^*$ has at most $k-2$ elements in each of these cosets.
Thus, given $i\in\{0,1,\dots,k-3\}$, the number of elements $\delta\in (\Delta B^* \ \cup \ \Delta{\mathcal F}) \cap C^{2k}_{2k-4-2i}$
is at most $k-2+(k-1)(n-1)$. We conclude that $|F_7 \ \cap \ C^{2k}_{2k-4}|\leq u_7$ in view of (\ref{u7}).

Thus we have
$$|F \ \cap \ C^{2k}_{2k-4}|\leq\sum_{i=1}^{10}|F_i \ \cap \ C^{2k}_{2k-4}|\leq \sum_{i=1}^{10}u_i=k^2(k-1)n+(1-2k)n-3$$
and then we can write
\begin{equation}\label{forbidden}
|F \ \cap \ C^{2k}_{2k-4}|<k^2(k-1)n.
\end{equation}
Now consider the function $Q(e,t)$ in Lemma \ref{BP} and calculate $Q(2k,k^2(k-1)n)$. We have
$$Q(2k,k^2(k-1)n)={1\over4}\biggl{[}(2k-1)^2+\sqrt{(2k-1)^4+8k(2k^4n-2k^3n+2)}\biggl{]}^2<8k^5n<q.$$
Thus, the same lemma applied with $e=2k$, $(i,j)=(2k-4,k-2)$, and $t=k^2(k-1)n$, assures that $|X'|>k^2(k-1)n$.
Then, comparing with (\ref{forbidden}), we conclude that $|X'|>|F \ \cap \ C^{2k}_{2k-4}|$. The assertion follows.
\end{proof}

\begin{cor}\label{asymptotic2}
Given any odd $k\geq3$ and any $r>0$, there exists a simple $(v,k;r)$ Heffter configuration for every odd integer $v=kw$ such that
$2v+1$ is a prime power and $w\geq4k^4\lceil{r\over k}\rceil$.
\end{cor}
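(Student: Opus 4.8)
The plan is to derive this from Theorem \ref{asymptotic} and Corollary \ref{HDP->HS2}, and then to throw away the superfluous parallel classes. Put $n:=\lceil r/k\rceil$, so that $kn\geq r$, and set $q:=2v+1$, which is a prime power by hypothesis. I would first check that the arithmetic conditions on $v=kw$ force $q$ into the residue class required by Theorem \ref{asymptotic}. Since $k$ is odd and $v=kw$ is odd, $w$ must be odd as well; writing $w=2\ell+1$ gives
\[
q=2kw+1=2k(2\ell+1)+1=4k\ell+2k+1\equiv 2k+1 \pmod{4k}.
\]
Next I would verify the size bound. From the hypothesis $w\geq 4k^4\lceil r/k\rceil=4k^4n$ we obtain
\[
q=2kw+1\geq 8k^5 n+1>8k^5 n ,
\]
so $q$ meets the hypotheses of Theorem \ref{asymptotic}.

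Applying Theorem \ref{asymptotic} with this $k$, $n$ and $q$ produces a simple $(\F_q^\Box,k;n)$ Heffter difference packing $\mathcal F$. Feeding $\mathcal F$ into Corollary \ref{HDP->HS2} turns it into a simple $(\tfrac{q-1}{2},k;kn)$ Heffter configuration, that is a simple $(v,k;kn)$ Heffter configuration, say with point set $\F_q^\Box$ and resolution $\{\mathcal P_1,\dots,\mathcal P_{kn}\}$.

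Finally, since $kn\geq r$, I would retain only the first $r$ parallel classes $\mathcal P_1,\dots,\mathcal P_r$ together with the union of the corresponding blocks. Discarding parallel classes from a resolved PLS leaves a resolved PLS; the point labeling is unchanged, hence it is still a Heffter labeling; and each retained block still has pairwise distinct partial sums, so simplicity is preserved. The outcome is a simple $(v,k;r)$ Heffter configuration, as required.

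I do not expect a genuine obstacle in this argument. The only points needing care are the two elementary checks above — in particular the observation that $v$ odd together with $k$ odd forces $w$ odd, which is exactly what makes $q\equiv 2k+1\pmod{4k}$ — and the (routine but worth stating) remark that restricting a Heffter configuration to a sub-collection of its parallel classes is again a Heffter configuration.
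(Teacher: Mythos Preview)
Your argument is correct and follows essentially the same route as the paper: set $n=\lceil r/k\rceil$, verify that $q=2v+1$ satisfies the hypotheses of Theorem~\ref{asymptotic}, apply Corollary~\ref{HDP->HS2}, and then discard $kn-r$ parallel classes. You have simply made explicit the two arithmetic checks (that $v$ odd with $k$ odd forces $w$ odd, hence $q\equiv 2k+1\pmod{4k}$, and that $w\geq 4k^4n$ gives $q>8k^5n$) that the paper leaves to the reader.
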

\begin{proof}
Set $n=\lceil{r\over k}\rceil$ and let $v$ be an integer as in the statement so that $q=2v+1$ is a prime power congruent to $2k+1$ $($mod $4k)$
and greater than $8k^5n$. Thus there exists a simple $(\F_q^\Box,k;n)$ Heffter difference packing by Theorem \ref{asymptotic} and then
a simple $(v,k;kn)$ Heffter configuration by Corollary \ref{HDP->HS2}. Deleting $kn-r$ parallel classes
we get the desired $(v,k;r)$ Heffter configuration.
\end{proof}

Now note that Theorem \ref{>8k^5r/k} is an  immediate consequence of Corollary \ref{asymptotic2} and Proposition \ref{MOHS=HS}.
Also note that Corollary \ref{asymptotic2} together with Dirichlet's theorem on arithmetic progressions, allows us to state the following.

\begin{cor}\label{asymptotic3}
Given any odd $k\geq3$ and any $r>0$, there are infinitely many values of $v$ for which
there exists a simple $(v,k;r)$ Heffter configuration.
\end{cor}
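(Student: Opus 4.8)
The statement to prove is Corollary~\ref{asymptotic3}: for any odd $k\geq3$ and any $r>0$, there are infinitely many $v$ admitting a simple $(v,k;r)$ Heffter configuration. The whole weight of the proof has already been shifted onto Corollary~\ref{asymptotic2}, so the only thing left to do is produce infinitely many integers $v$ satisfying the three arithmetic conditions required there, namely: $v$ is odd, $v=kw$ with $w\geq 4k^4\lceil r/k\rceil$, and $2v+1$ is a prime power. The plan is therefore purely number-theoretic: exhibit an arithmetic progression of primes $q$ with $q\equiv 2k+1\pmod{4k}$, check that the corresponding $v=(q-1)/2$ is an odd multiple of $k$, and observe that all but finitely many of these are large enough to meet the bound on $w$.

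First I would fix $k$ and $r$ and set $n=\lceil r/k\rceil$ exactly as in Corollary~\ref{asymptotic2}. The key observation is that $\gcd(2k+1,4k)=1$: indeed any common divisor $d$ of $4k$ and $2k+1$ divides $2(2k+1)-4k=2$, and since $2k+1$ is odd, $d=1$. Hence Dirichlet's theorem on primes in arithmetic progressions guarantees infinitely many primes $q$ with $q\equiv 2k+1\pmod{4k}$. For every such prime, write $v=(q-1)/2$. Then $q\equiv 2k+1\pmod{4k}$ gives $q-1\equiv 2k\pmod{4k}$, so $v=(q-1)/2\equiv k\pmod{2k}$; in particular $v$ is a multiple of $k$, say $v=kw$, and $w\equiv 1\pmod 2$ is odd, which forces $v=kw$ to be odd as well (both $k$ and $w$ are odd). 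So the congruence condition $q\equiv 2k+1\pmod{4k}$ simultaneously secures ``$v$ odd'' and ``$k\mid v$'', and $2v+1=q$ is trivially a prime power.

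It remains to guarantee $w\geq 4k^4\lceil r/k\rceil = 4k^4 n$, equivalently $q=2kw+1\geq 8k^5 n+1$. Since there are infinitely many primes $q$ in the progression $2k+1\pmod{4k}$, only finitely many of them can fail to exceed the fixed bound $8k^5 n$; discarding those finitely many, we are left with infinitely many primes $q$, hence infinitely many distinct values $v=(q-1)/2$, each satisfying all the hypotheses of Corollary~\ref{asymptotic2}. Applying that corollary to each such $v$ yields a simple $(v,k;r)$ Heffter configuration, and since the $v$'s are pairwise distinct, this proves the claim.

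**Where the difficulty is.** Honestly there is no real obstacle here: the corollary is a one-line consequence of Corollary~\ref{asymptotic2} plus Dirichlet. The only point that needs a moment's care is confirming that the single congruence $q\equiv 2k+1\pmod{4k}$ delivers \emph{all three} arithmetic requirements at once (oddness of $v$, divisibility $k\mid v$, and primality of $2v+1$), together with the coprimality check $\gcd(2k+1,4k)=1$ that makes Dirichlet applicable; the bound on $w$ then costs nothing because ``sufficiently large'' is automatic once infinitely many primes are available. So the proof should be short and I would not expect to need anything beyond what is stated above.
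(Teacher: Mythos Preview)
Your proof is correct and follows exactly the route the paper indicates: the paper simply remarks that the corollary follows from Corollary~\ref{asymptotic2} together with Dirichlet's theorem on arithmetic progressions, and you have filled in precisely those details (the coprimality $\gcd(2k+1,4k)=1$, the verification that $q\equiv 2k+1\pmod{4k}$ forces $v=(q-1)/2$ to be an odd multiple of $k$, and the observation that the size bound is met for all but finitely many such primes).
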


Even though the bound $q>8k^5n$ of Theorem \ref{asymptotic} is not so huge as in other {\it Weil-type constructions}, our
computer results show that it is much greater than necessary anyway.
Suffice it to say that the first $q$ for which Theorem \ref{asymptotic} guarantees the existence
of a $(\F_q^\Box,5)$ Heffter ruler is $25,031$ whereas in Example \ref{35} we got such a
Heffter ruler with $q$ equal to just $71$. Further evidence to this fact is provided by the
following table reporting the minimum $q$ for which, given $k\leq13$, there exists a simple $(\F^\Box_q,k)$ Heffter ruler,
an example of such a ruler, the density of its related $({q-1\over2},k;k)$ Heffter symmetric configuration (first density), and the density
of its related $({q-1\over2},\{k^k,{q-1\over2k}\})$ Heffter space (second density).
\begin{center}
\begin{tabular}{|c|c|c|c|c|}
\hline {$k$} & $q_{min}$ & $(\F_q^\Box,k)$ Heffter ruler & 1st density & 2nd density   \\
\hline
\hline {$3$} & $67$ & $\{1,10,56\}$ & 0.1875 & 0.5     \\
\hline {$5$} & $71$   & Example \ref{35} & 0.5882... & 0.7647... \\
\hline {$7$} & $211$ &  $\{1,4,82,64,154,59,58\}$ & 0.4038... & 0.5384... \\
\hline {$9$} & $271$ & $\{1,36,110,44,179,56,224,156,7\}$  & 0.5373... & 0.6417... \\
\hline {$11$} & $419$ & $\{1,4,148,64,388,45,226,363,48,73,316\}$ & 0.5288... & 0.6153... \\
\hline {$13$} & $599$ & $\{1, 49, 515, 245, 181, 526, 117, 34, 332, 432, 130, 424, 9\}$  & 0.5217... & 0.5973... \\
\hline
\end{tabular}
\end{center}


\medskip
It is natural to say that two $(\F_q^\Box,k)$ Heffter rulers $A$ and $B$ are {\it equivalent} if we
have $At=B$ for a suitable $t\in\F_q$. In this case it is clear that their lists of differences $\Delta A$ and $\Delta B$
coincide.
Hence, for the existence of a $(\F_q^\Box,k;n)$ Heffter difference packing it is necessary to
have at least $n$ inequivalent $(\F_q,k)$ Heffter rulers. Let us show that in the special
case $k=3$ this condition is also sufficient.

\begin{prop}\label{inequivalent}
The maximum number of pairwise inequivalent $(\F_q^\Box,3)$ Heffter rulers coincide
with the maximum $n$ for which there exists a $(\F_q^\Box,3;n)$ Heffter difference packing.
\end{prop}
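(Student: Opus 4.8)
The plan is as follows. Let $N$ be the maximum number of pairwise inequivalent $(\F_q^\Box,3)$ Heffter rulers and $M$ the largest $n$ for which a $(\F_q^\Box,3;n)$ Heffter difference packing exists. The inequality $M\le N$ is immediate and has essentially been noted already: by Remark~\ref{Heffterruler} a $(\F_q^\Box,3;M)$ Heffter difference packing consists of $M$ Heffter rulers with pairwise disjoint lists of differences, and equivalent rulers share a common nonempty list of differences, so these rulers are pairwise inequivalent. Hence everything reduces to the reverse inequality $N\le M$, and the plan is to prove the stronger statement, peculiar to $k=3$: \emph{any two inequivalent $(\F_q^\Box,3)$ Heffter rulers have disjoint lists of differences}. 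Granting this, $N$ pairwise inequivalent Heffter rulers form, by Remark~\ref{Heffterruler}, a $(\F_q^\Box,3;N)$ Heffter difference packing, so $N\le M$ and the proof is complete.

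To establish the stronger statement I would first determine the shape of $\Delta B$ for a $(\F_q^\Box,3)$ Heffter ruler $B=\{a,b,c\}$. Since $a+b+c=0$ we have $c=-a-b$; putting $\alpha=a/b$ (note $\alpha\notin\{0,-1\}$, as $a\neq0$ and $c\neq0$), a direct computation of the six ratios gives
$$\Delta B=\Bigl\{\,\alpha,\ \alpha^{-1},\ -(1+\alpha),\ -(1+\alpha)^{-1},\ -\alpha(1+\alpha)^{-1},\ -(1+\alpha)\alpha^{-1}\,\Bigr\}.$$
One checks that this set is closed under the two maps $\iota\colon x\mapsto x^{-1}$ and $\tau\colon x\mapsto -1-x$, which generate a group $\Gamma$ of M\"obius transformations of $\F_q\cup\{\infty\}$ isomorphic to $S_3$ (it is precisely the group of M\"obius transformations fixing the unordered set $\{0,-1,\infty\}$). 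Thus $\Delta B$ is a single $\Gamma$-orbit, and since $B$ is a $(v,3)$-MGR we have $|\Delta B|=6=|\Gamma|$, so this orbit is regular; equivalently, $\alpha$ avoids the points lying on the non-regular orbits, namely the orbit of $1$ (equal to $\{1,-1/2,-2\}$) and, when it exists, the orbit of a primitive cube root of unity, each of which would force two of $a,b,c$ to coincide or $\Delta B$ to shrink, contradicting the MGR property. Now two $\Gamma$-orbits are either equal or disjoint, so for any two Heffter rulers $B,B'$ we get either $\Delta B\cap\Delta B'=\emptyset$ or $\Delta B=\Delta B'$.

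It then remains to show $\Delta B=\Delta B'\Rightarrow B\sim B'$, which disposes of the second alternative when $B\not\sim B'$. Rescaling $B$ by a unit we may assume $1\in B$, so $B=\{1,\beta,-1-\beta\}$ with $\beta\in\Delta B$; likewise $B'=\{1,\beta',-1-\beta'\}$ with $\beta'\in\Delta B'=\Delta B=\Gamma\cdot\beta$. A short verification shows that whenever $\gamma'$ lies in the $\Gamma$-orbit of $\gamma$ the triples $\{1,\gamma,-1-\gamma\}$ and $\{1,\gamma',-1-\gamma'\}$ are scalar multiples of one another (for instance $\{1,\gamma^{-1},-1-\gamma^{-1}\}=\gamma^{-1}\{1,\gamma,-1-\gamma\}$, while $\{1,-1-\gamma,\gamma\}$ is literally the same set), so $B'=tB$ for a suitable $t\in\F_q^*$, as wanted. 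This last part is the step I expect to require the most care: verifying the explicit form of $\Delta B$, checking that the MGR condition rules out precisely the non-regular $\Gamma$-orbits, and showing that a Heffter ruler is determined up to equivalence by its list of differences. Everything else is formal.
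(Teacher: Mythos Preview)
Your argument is correct, but it is considerably more elaborate than the paper's. Both proofs reduce the proposition to the claim that two $(\F_q^\Box,3)$ Heffter rulers are equivalent if and only if their lists of differences meet. The paper dispatches the nontrivial direction in two lines: if $a_1a_2^{-1}=b_1b_2^{-1}$ with $a_1,a_2\in A$ and $b_1,b_2\in B$, then setting $t=a_2^{-1}b_2$ one has $\{a_1,a_2\}t=\{b_1,b_2\}$; since $At$ and $B$ are both zero-sum $3$-sets containing $b_1,b_2$, each must equal $\{b_1,b_2,-b_1-b_2\}$, so $At=B$. By contrast, you parametrise $\Delta B$ by $\alpha=a/b$, recognise it as a regular orbit of the anharmonic group $\Gamma=\langle x\mapsto x^{-1},\,x\mapsto-1-x\rangle\cong S_3$, use the orbit dichotomy to get $\Delta B=\Delta B'$ or $\Delta B\cap\Delta B'=\emptyset$, and then normalise both rulers to contain $1$ and check on generators that $\Gamma$-related parameters give equivalent triples. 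Your route explains \emph{why} the six ratios move together---they are the images of a single one under the classical $S_3$ acting on cross-ratio-type quantities---and your observation that $\Delta B$ is determined by the orbit of $\alpha$ is a genuine structural insight. The paper's route, on the other hand, avoids all of that machinery: the zero-sum condition already pins down the third element once two are matched, so a single shared ratio is enough. Your proof would benefit from stating this shortcut once the orbit dichotomy is in hand; the final normalisation-and-generator check, while correct, is then unnecessary.
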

\begin{proof}
By Remark \ref{Heffterruler}, a $(\F_q^\Box,3;n)$ Heffter difference packing is the same as a set of $n$ $(\F_q^\Box,3)$ Heffter rulers
whose lists of differences are pairwise disjoint.
Thus, to prove the assertion, it is enough to show that two $(\F_q,3)$ Heffter rulers $A$ and $B$ are inequivalent if and only if
their lists of differences are disjoint, that it is the same as to show that they are equivalent if and only if
their lists of differences are not disjoint.

If $A$ and $B$ are equivalent, their lists are certainly not disjoint since they even coincide.

Now assume that $\Delta A$ and $\Delta B$ are not disjoint so that we have $a_1a^{-1}_2=b_1b^{-1}_2$
for suitable elements $a_1, a_2\in A$ and $b_1, b_2\in B$. It follows that $\{a_1,a_2\}\cdot t=\{b_1,b_2\}$ with $t=a_2^{-1}b_2$.
Thus $At$ and $B$ share the two elements $b_1$ and $b_2$.
Recall that both $A$ and $B$ are zero-sum so that $At$ is zero-sum as well.
It follows that we have $At=\{b_1,b_2,-b_1-b_2\}=B$, i.e., $A$ and $B$ are equivalent.
\end{proof}

In the following table we report the number $r$ of inequivalent $(\F^\Box_q,3)$ Heffter rulers
for all admissible $q<500$.
\small
\begin{center}
\begin{tabular}{|c||c|c|c|c|c|c|c|c|c|c|c|c|c|c|c|c|c|c|c|c|}
\hline {$q$} &  19&31&43&67&79&103&127&139&151&163&199&211&223&271&283&307&331&367&379   \\
\hline {$r$} &0&0&0&1&0&0&1&1&2&2&3&3&2&3&2&3&1&2&2  \\
\hline
\end{tabular}
\end{center}
\normalsize

\medskip
For $k>3$ Proposition \ref{inequivalent} is false; the number of inequivalent $(\F_q^\Box,k)$ Heffter rulers
is usually much greater than the maximum $n$ for which there exists a $(\F_q^\Box,k;n)$ Heffter difference packing.
As an example, we have checked that there are 26 inequivalent $(\F_{151}^\Box,5)$ Heffter rulers. Yet, there is no
triple of them forming a $(\F_{151}^\Box,5;3)$ Heffter difference packing.
A $(\F_{151}^\Box,5;2)$ Heffter difference packing is given by
 $$\bigl{\{}\{1,36,58,110,97\}, \{1,78,22,139,62\}\bigl{\}}.$$

\section{Some constructions for Heffter nets over a finite field}

In this section we aim to construct some $(k^2,k;r)$ Heffter nets over a finite field.
Thus $2k^2+1$ should be a prime power $q$ and the point set should be a half-set of $\F_q$.
In this case the method of differences considered in the previous section is totally impotent.
Indeed the application of this method would require a Heffter $(\F_q^\Box,k)$ ruler and hence,
by definition, a resolvable $(k^2,k)$-MGR. This would be a $k$-subset of $\Z_{k^2}$ whose list of
differences is $\Z_{k^2}\setminus k\Z_{k^2}$, i.e., a cyclic $(k,k,k,1)$ {\it relative difference set}
 whose non-existence has been known for a long time (see Theorem 6.2 in \cite{EB}).

\subsection{Constructing $(9n^2,3n;4)$ Heffter nets}
We first consider the case when $q=2k^2+1$ with $k$ divisible by 3, say $k=3n$, so that $q=18n^2+1$. The following theorem
gives a method to construct a $(9n^2,3n;4)$ Heffter net over $\F_q$ with $q=18n^2+1$ a prime power and $n$ odd. In the proof
we start from a standard $(9n^2,3n;4)$ Heffter net $\mathcal N$ with
point set $\Z_{3n}\times\Z_{3n}$ and then we show that under the hypothesis of the theorem there exists a Heffter labeling of $\mathcal N$
with labels in $\F_q$.

\begin{thm}\label{18n^2+1}
Let $q=18n^2+1$ be a prime power with $n$ odd, let $x$ be a $(3n)$-th primitive root of unity of $\F_q$,
and let $Y=(y_0,y_1,\dots,y_{3n-1})$ be a zero-sum complete system of representatives for the cosets
of $C^{3n}$ in $\F_q^*$ such that both the sums
$\displaystyle\sigma=\sum_{i=0}^{3n-1}x^{i}y_{i}$ and $\displaystyle\sigma'=\sum_{i=0}^{3n-1}x^{-i}y_{i}$ are null.
Then there exists a $(9n^2,3n;4)$ Heffter net.
\end{thm}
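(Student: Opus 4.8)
The plan is to exhibit an explicit $(9n^2,3n;4)$ net $\mathcal N$ on the group $\Z_{3n}\times\Z_{3n}$ and then to use the hypotheses on $x$ and $Y$ to upgrade it to a Heffter net. Put $k=3n$; since $n$ is odd, $k$ is odd, and $q=2k^2+1\equiv3\pmod4$, so $\F_q^\Box$ is a half-set of $\F_q$. Take as point set of $\mathcal N$ the set $\Z_k\times\Z_k$, and as its four parallel classes the ``lines of slope $\infty,0,1,-1$'': the verticals $V_a=\{(a,j)\colon j\in\Z_k\}$, the horizontals $H_b=\{(i,b)\colon i\in\Z_k\}$, the diagonals $D_c=\{(i,j)\colon i+j=c\}$ and the antidiagonals $D'_c=\{(i,j)\colon i-j=c\}$, where $a,b,c$ run over $\Z_k$. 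Each of the four families partitions $\Z_k\times\Z_k$ into $k$ blocks of size $k$; and since $k$ is odd the pairwise differences $1,-1,-2$ of the finite slopes $0,1,-1$ are units modulo $k$, so a line of one family meets a line of another family in exactly one point, and two distinct points lie on at most one of these lines. Hence $\mathcal N$, resolved by these four classes, is a $(9n^2,3n;4)$ configuration with $9n^2=(3n)^2$ points, i.e.\ a net.

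It remains to produce a Heffter labeling of $\mathcal N$, and the natural candidate is $f\colon\Z_k\times\Z_k\to\F_q$, $f(i,j)=x^i y_j$. First, $f$ is a Heffter labeling: the sum of $f$ over $V_a$ is $x^a\sum_j y_j=0$ because $Y$ is zero-sum; over $H_b$ it is $y_b\sum_i x^i=0$ because $x$ is a primitive $k$-th root of unity; over $D_c$ it is $\sum_i x^i y_{c-i}=x^c\sum_j x^{-j}y_j=x^c\sigma'=0$; and over $D'_c$ it is $\sum_i x^i y_{i-c}=x^c\sum_j x^{j}y_j=x^c\sigma=0$. Second, $f$ is injective: from $x^iy_j=x^{i'}y_{j'}$ we get $y_jy_{j'}^{-1}=x^{i'-i}\in\langle x\rangle$, and $\langle x\rangle$, the subgroup of $\F_q^*$ of order $k$, is contained in $C^{3n}$ (the subgroup of order $2k$), so $y_j$ and $y_{j'}$ lie in the same coset of $C^{3n}$ and hence $j=j'$, whence also $i=i'$. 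Third, $\mathrm{Im}\,f=\bigcup_j y_j\langle x\rangle$ is a half-set of $\F_q$: the subgroup $\langle x\rangle$ has odd order $3n$, so it does not contain $-1$, whereas $C^{3n}$ (of even order $6n$) does; since $[C^{3n}:\langle x\rangle]=2$ this gives $C^{3n}=\langle x\rangle\cup(-\langle x\rangle)$, a disjoint union, whence $\F_q^*=\bigcup_j y_jC^{3n}=\mathrm{Im}\,f\cup(-\mathrm{Im}\,f)$; this last union is disjoint as well, for $x^iy_j=-x^{i'}y_{j'}$ would force $y_jy_{j'}^{-1}\in-\langle x\rangle\subseteq C^{3n}$, hence $j=j'$ and then $-1\in\langle x\rangle$, a contradiction. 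As $|\mathrm{Im}\,f|=(3n)^2=(q-1)/2$, the set $\mathrm{Im}\,f$ contains exactly one of $z,-z$ for each $z\in\F_q^*$.

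Putting these facts together, $f$ transports $\mathcal N$ to a resolved partial linear space on the half-set $\mathrm{Im}\,f$ of $\F_q$ all of whose blocks are zero-sum, that is, to a $(9n^2,3n;4)$ Heffter net over $\F_q$. I do not expect a genuine obstacle here: the construction is essentially forced, and the hard part --- the existence of $x$ and of a zero-sum coset-representative system $Y$ with $\sigma=\sigma'=0$ --- has been built into the hypothesis. The only care required is the bookkeeping with cyclotomic cosets: identifying $\langle x\rangle$ as the order-$k$ subgroup lying with index $2$ inside $C^{3n}$ (where the oddness of $n$ is used both for the geometry of $\mathcal N$ and to force $-1\notin\langle x\rangle$), and orienting the two diagonal classes so that their block sums are exactly $\sigma'$ and $\sigma$.
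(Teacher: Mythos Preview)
Your proof is correct and follows essentially the same route as the paper: the same $(9n^2,3n;4)$ net on $\Z_{3n}\times\Z_{3n}$ given by horizontals, verticals and the two diagonal families, the same labeling $f(i,j)=x^iy_j$, and the same verification that $\langle x\rangle$ sits with index $2$ inside $C^{3n}$ so that $X\cdot Y$ is a half-set. Your write-up is in fact a bit more explicit than the paper's (you justify the net property via slope differences and spell out the disjointness of $\mathrm{Im}\,f$ and $-\mathrm{Im}\,f$); the only superfluous remark is the observation that $\F_q^\Box$ is a half-set, which you never use since $\mathrm{Im}\,f$ need not equal $\F_q^\Box$.
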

\begin{proof}
Consider the following four partitions of $\Z_{3n}\times\Z_{3n}$ into subsets of size $3n$:
$${\mathcal P}_1=\bigl{\{}\Z_{3n} \times \{j\} \ | \ j\in \Z_{3n}\bigl{\}};$$
$${\mathcal P}_2=\bigl{\{}\{(0,j),(1,j+1),(2,j+2),\dots,(3n-1,j+3n-1)\} \ | \ j\in \Z_{3n}\bigl{\}};$$
$${\mathcal P}_3=\bigl{\{}\{(0,j),(1,j-1),(2,j-2),\dots,(3n-1,j-3n+1)\} \ | \ j\in \Z_{3n}\bigl{\}};$$
$${\mathcal P}_4=\bigl{\{}\{i\} \times \Z_{3n} \ | \ i\in \Z_{3n}\bigl{\}}.$$
It is  readily seen that  that the triple ${\mathcal N}=(V,{\mathcal B},{\mathcal R})$ with
$$V=\Z_{3n}\times\Z_{3n},\quad\quad {\mathcal B}=\displaystyle\bigcup_{i=1}^4{\mathcal P}_i,\quad\quad {\rm and}\quad\quad
{\mathcal R}=\{{\mathcal P}_1,{\mathcal P}_2,{\mathcal P}_3,{\mathcal P}_4\}$$
is a $(9n^2,3n;4)$ net. Thus, to prove the assertion it is enough to show that $\mathcal N$ admits a Heffter labeling.

Let $X=\langle x\rangle$ be the group of $(3n)$-th roots of unity of $\F_q$. Considering that $n$ is odd, $\{1,-1\}\cdot X$ is the group
of $(6n)$-th roots of unity. Note that this group is the same as the group $C^{3n}$ of $(3n)$-th powers of $\F_q^*$
so that we can write
$$\{1,-1\}\cdot X=C^{3n}\quad\quad{\rm and}\quad\quad C^{3n}\cdot Y=\F_q^*.$$
Thus, setting $V=X\cdot Y$, we see that we have $\{1,-1\}\cdot V=\F_q^*$, i.e., $V$
is a half-set of $\F_q$. Now consider the map
$f: (i,j)\in \Z_{3n}\times\Z_{3n} \longrightarrow x^iy_j\in V$
which is clearly bijective; $x^{i_1}y_{j_1}=x^{i_2}y_{j_2}$ implies $y_{j_1}y_{j_2}^{-1}=x^{i_2-i_1}\in C^{3n}$
so that $j_1=j_2$ by definition of $Y$ and then $i_1=i_2$.
Let us prove that $f$ is a Heffter labeling, namely that $f(B)$ is zero-sum for every block $B$ of the net considered above.

If $B\in{\mathcal P}_1$, then $B=\{(i,j) \ | \ 0\leq i\leq 3n-1\}$ for some $j$, hence we have
$$\sum_{P\in B}f(P)=\sum_{i=0}^{3n-1}x^iy_j=y_j\sum_{i=0}^{3n-1}x^i=0$$
since $X$ is zero-sum. Indeed, as recalled earlier, any non-trivial subgroup
of the multiplicative group of a finite field is zero-sum (see, e.g., Fact 1 in \cite{BN1}).

If $B\in{\mathcal P}_2$, then $B=\{(i,j+i) \ | \ 0\leq i\leq 3n-1\}$ for some $j$, hence we have
$$\sum_{P\in B}f(P)=\sum_{i=0}^{3n-1}x^iy_{j+i}=x^{-j}\sum_{i=0}^{3n-1}x^{j+i}y_{j+i}=x^{-j}\sigma=0$$
since $\sigma$ is null by assumption.

If $B\in{\mathcal P}_3$, then $B=\{(i,j-i) \ | \ 0\leq i\leq 3n-1\}$ for some $j$, hence we have
$$\sum_{P\in B}f(P)=\sum_{i=0}^{3n-1}x^iy_{j-i}=x^{j}\sum_{i=0}^{3n-1}x^{i-j}y_{j-i}=x^{j}\sigma'=0$$
since $\sigma'$ is null by assumption.

Finally, if $B\in{\mathcal P}_4$, then $B=\{(i,j) \ | \ 0\leq j\leq 3n-1\}$ for some $i$, hence we have
$$\sum_{P\in B}f(P)=\sum_{j=0}^{3n-1}x^iy_{j}=x^{i}\sum_{i=0}^{3n-1}y_{j}=0$$
since $Y$ is zero-sum by assumption.
The assertion follows.
\end{proof}

We are not able to say whether the above theorem assures the existence of infinitely many $(9n^2,3n;4)$ Heffter nets
for the simple reason that, as far as we are aware, it is not known whether the integer sequence $18n^2+1$ contains
infinitely many prime powers. We are not even able to say whether the theorem guarantees the existence of a $(9n^2,3n;4)$ Heffter
net for ``many" values of $n$. Anyway, a simple probabilistic argument make us believe that the theorem succeeds for any admissible $n>1$. Indeed
the number of $(3n)$-tuples $Y$ of pairwise distinct representatives for the cosets of $C^{3n}$ in $\F_{q}^*$, that is $(3n)!(6n)^{3n}$,
is huge compared with the number $(18n^2+1)^3$ of triples of $\F_q^3$.
Thus there is a high probability that there is at least one $Y$ whose related triple $(\sum_{y\in Y}y,\sigma,\sigma')$ is null.

In the following detailed example we apply
the above theorem with $n=3$.
\begin{ex}
We can apply Theorem \ref{18n^2+1} with $n=3$ since $q=18\cdot3^2+1=163$ is a prime.
As a $9$-th primitive root of unity of $\Z_{163}$ we can take $x=40$, hence
the group of $9$-th roots of unity of $\Z_{163}$ is
$$X=\{1,x,\dots,x^8\}=\{1, 40, 133, 104, 85, 140, 58, 38, 53\}.$$
Now consider the following $9$-subset of $\Z_{163}$:
$$Y=\{y_0,y_1,\dots,y_8\}=\{1, 2, 160, 142, 119, 84, 36, 128, 143\}.$$
It can be checked that $2$ is a primitive element of $\Z_{163}$ and that $y_i=2^{\alpha_i}$ with $\alpha_i\equiv i$ (mod 9)
for each $i$ as follows:
$$(\alpha_0,\dots,\alpha_8)=(0, 1, 20, 93, 130, 14, 42, 7, 98).$$
This guarantees that $Y$ is a complete system of representatives for the cosets of $C^9$ in $\Z_{163}^*$.
One can also check that we have
$$\sum_{i=0}^8y_i = 0, \quad \sigma=\sum_{i=0}^8x^iy_i=0,\quad {\rm and}\quad
\sigma'=\sum_{i=0}^8x^{-i}y_i=0.$$ Thus there exists a $(81,9;4)$ Heffter net.
We note that the four parallel classes of this Heffter net are the rows, the columns, the {\it right diagonals} and the
 {\it left diagonals} of the matrix $(x^iy_j)$ below.
$$\begin{pmatrix}
1& 2& 160& 142& 119& 84& 36& 128& 143\cr
40& 80& 43& 138& 33& 100& 136& 67& 15\cr
133& 103& 90& 141& 16& 88& 61& 72& 111\cr
104& 45& 14& 98& 151& 97& 158& 109& 39\cr
85& 7& 71& 8& 9& 131& 126& 122& 93\cr
140& 117& 69& 157& 34& 24& 150& 153& 134\cr
58& 116& 152& 86& 56& 145& 132& 89& 144\cr
38& 76& 49& 17& 121& 95& 64& 137& 55\cr
53& 106& 4& 28& 113& 51& 115& 101& 81
\end{pmatrix}$$
We also note that this matrix is a rank-one H$(9,9)$ (see \cite{B}).
\end{ex}

We found a $(9n^2,3n;4)$ Heffter net via Theorem \ref{18n^2+1} and the help of a computer also for $n=7$ and $n=9$.

For $n=7$, we have $q=18n^2+1=883$ and a $(441,21;4)$ Heffter net over $\F_q$  can be obtained
via Theorem \ref{18n^2+1} taking
$$x=729\quad{\rm and}\quad Y=(2^0, 2^1, 2^2, \dots, 2^{13}, 490, 97, 60, 72, 483, 680, 278).$$

For $n=9$, we have $q=18n^2+1=1459$ and a $(729,27;4)$ Heffter net over $\F_q$ can be obtained
via Theorem \ref{18n^2+1} taking
$$x=1080\quad{\rm and}\quad Y=(3^0, 3^1, 3^2, \dots, 3^{19}, 546, 597, 652, 1307, 1386, 467, 1338).$$

\subsection{A $(121,11;9)$ Heffter net}
Now we should consider the case when $q=2k^2+1$ with $k$ not divisible by 3. Here we have $k\equiv\pm1$ (mod 3) and then $q\equiv0$ (mod 3)
so that $q$ is a power of 3. It follows that $q=3^n=2k^2+1$ for some $n$. Given that this identity can be rewritten as ${3^n-1\over3-1}=k^2$
and considering the known results on the Nagell-Ljunggren equation ${x^n-1\over x-1}=y^m$ already mentioned in Section 2,
we necessarily have $q=3^5$ and $k=11$. On the one hand this is disappointing since it means that for $k\not\equiv0$ (mod 3) the only attempt to construct a $(k^2,k;r)$ Heffter net over
a finite field is to try with a $(121,11;r)$ Heffter net over $\F_{3^5}$ for some $r$.
On the other hand this attempt led us to the Heffter space that surprised us the most. Indeed it is a $(121,11;9)$ Heffter net which, with its remarkable density
of ${3\over4}$, is the most dense Heffter configuration that we have at this moment.
Our strategy for constructing this net was the following. We started from the affine plane of order 11 which, of course, is a $(121,11;12)$ net.
Since a $(121,11;12)$ Heffter net cannot exist by Proposition \ref{linear}, we deleted $d$ parallel classes of the plane in various ways getting some
$(121,11;12-d)$ nets. Then we tried to obtain a Heffter labeling of at least one of these nets with labels in $\F_{3^5}$. We failed with $d=1,2$ but we succeeded with $d=3$.

\eject
\begin{thm}
There exists a $(121,11;9)$ Heffter net over $\F_{3^5}$.
\end{thm}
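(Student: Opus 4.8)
The plan is to exhibit an explicit $(121,11;9)$ Heffter net over $\F_{3^5}$ following the strategy sketched just before the statement. First I would fix a concrete model of $\F_{3^5}$, say $\F_3[\theta]$ with $\theta^5=\theta+1$ (or whatever irreducible degree-$5$ polynomial over $\F_3$ is convenient), and identify the nonzero cube classes: since $3^5-1=242=2\cdot 11^2$, the subgroup $C^2=\F_{3^5}^\Box$ of index $2$ has order $121$ and is a half-set of the additive group of $\F_{3^5}$. The $121$ points of the affine plane $\mathrm{AG}(2,11)$ will be bijectively labelled by the elements of $\F_{3^5}^\Box$; I would present this labelling as an explicit array, exactly as in the worked $n=3$ example for Theorem~\ref{18n^2+1}.

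Next I would recall that $\mathrm{AG}(2,11)$ is a $(121,11;12)$ net whose $12$ parallel classes are the $12$ pencils of parallel lines (slopes $0,1,\dots,9,\infty$). Proposition~\ref{linear} forbids a Heffter labelling of the full plane (a $(121,11;12)$ Heffter net would force $r-1=11$ to be a multiple of the order of every element of the group, impossible), so one must discard some parallel classes. I would state which three slopes are deleted, leaving nine pencils ${\mathcal P}_1,\dots,{\mathcal P}_9$ that still form a $(121,11;9)$ net (deleting parallel classes of a net always yields a net), and then verify that under the chosen labelling every one of the $99$ lines in these nine pencils is zero-sum in $\F_{3^5}$. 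That verification is a finite check: it suffices to confirm that the nine surviving pencils each partition $\F_{3^5}^\Box$ into eleven zero-sum blocks, which I would record in a table of the $99$ blocks (or, more compactly, describe the labelling as a suitable linear/affine image of a coordinate grid so that each pencil's zero-sum property reduces to a handful of identities among fixed field elements, analogous to the computation with $\sigma,\sigma'$ in Theorem~\ref{18n^2+1}).

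The main obstacle is purely the existence of a labelling that makes nine of the twelve pencils simultaneously zero-sum: there is no clean algebraic family here the way there is for the $18n^2+1$ construction, so the labelling has to be found by computer search over choices of which three slopes to delete and over labellings compatible with the remaining structure. Once such a labelling is in hand, the proof is just the bookkeeping above: (i) the nine pencils form a resolvable configuration with $v=k^2=121$, hence a net; (ii) the point set is a half-set of $\F_{3^5}$ because it is the index-$2$ subgroup $\F_{3^5}^\Box$; (iii) every block is zero-sum by direct inspection. I would therefore present the explicit array of labels together with the list of the $99$ blocks arranged into the nine parallel classes, remark that each block visibly sums to $0$ in $\F_{3^5}$, and note that the collinearity degree is $9\cdot 10=90$ out of $120$, giving density $3/4$ as claimed.
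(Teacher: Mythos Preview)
Your plan matches the paper's approach almost exactly: start from $\mathrm{AG}(2,11)$, drop three parallel classes, and exhibit a Heffter labelling of the remaining $(121,11;9)$ net. The paper carries this out with the rank-one labelling $f(i,j)=x^iy_j$, where $x$ is a primitive $11$th root of unity in $\F_{3^5}$ and $Y=(y_0,\dots,y_{10})$ is a carefully chosen transversal for the cosets of $C^{11}$; the retained slopes are $S=\{0,1,2,3,5,7,9,10,\infty\}$, and the zero-sum condition for each slope $s\in S$ reduces to the single identity $\sum_i x^i y_{si}=0$ (plus $\sum_j y_j=0$ for $s=\infty$), exactly as you anticipate in your parenthetical remark about $\sigma,\sigma'$.

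One point to correct: the half-set used is \emph{not} $\F_{3^5}^\Box$. The rank-one ansatz forces the point set to be $V=X\cdot Y$ with $X=C^{22}$ the $11$th roots of unity and $Y$ a transversal of $C^{11}$ in $\F_{3^5}^*$; since $Y$ must hit every coset of $C^{11}$, it necessarily contains non-squares (indeed the paper's $Y$ contains $g^1$). The argument that $V$ is a half-set is instead that $\{1,-1\}\cdot X=C^{11}$ and $C^{11}\cdot Y=\F_{3^5}^*$. If you insist on $\F_{3^5}^\Box$ as the point set you would need $Y$ to be a transversal of $C^{22}$ inside $C^2$, which is a different search space; there is no reason to expect that choice to fail, but it is not what the paper does, and your step (ii) as written would not verify the half-set property for the labelling the paper actually produces.
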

\begin{proof}
The affine plane AG$(2,11)$ is clearly a $(121,11;12)$ net
with point set $\Z_{11}\times\Z_{11}$ and resolution ${\mathcal R}=\{{\mathcal P}_s \ | \ s\in\Z_{11} \ \cup \ \{\infty\}\}$ where ${\mathcal P}_s$
is the set of lines of slope $s$:
$${\mathcal P}_s=\bigl{\{} \{(i,si+j) \ | \ 0\leq i\leq10\} \ | \ 0\leq j\leq 10\bigl{\}}\quad\quad\mbox{for $0\leq s\leq 10$};$$
$${\mathcal P}_\infty=\bigl{\{} \{i\}\times\Z_{11} \ | \ 0\leq i\leq 10\bigl{\}}.$$
Given any subset $S$ of $\Z_{11} \ \cup \ \{\infty\}$, set ${\mathcal B}(S)=\bigcup_{s\in S}{\mathcal P}_s$
and ${\mathcal R}(S)=\{{\mathcal P}_s \ | \ s\in S\}$.
It is clear that the triple ${\mathcal N}(S)=(\Z_{11}\times\Z_{11},{\mathcal B}(S),{\mathcal R}(S))$
is a  $(121,11;|S|)$ net. Thus, to prove the assertion, we have to find a 9-subset $S$ of $\Z_{11} \ \cup \ \{\infty\}$
for which ${\mathcal N}(S)$ admits a Heffter labeling.

Let $g$ be a root of the primitive polynomial $z^5+2z^4+1$ over $\F_3$ so that
$\F_{3^5}^*=\{g^i \ | \ 0\leq i\leq 242\}$.
Consider the 11-tuple
$$Y=(y_0,y_1,y_2,\dots,y_{10})=(g^0, g^{1}, g^{18}, g^{3}, g^{81}, g^{27}, g^{54}, g^{162}, g^{6}, g^{9}, g^{2}).$$
Note that the map $g^i\in\F_q^*\longrightarrow i\in\Z_{11}$ is bijective on $Y$:
$$(0,1,18,3,81,27,54,162,6,9,2)\equiv(0,1,7,3,4,5,10,8,6,9,2) \quad \mbox{(mod 11)}.$$
Thus $Y$ is a complete system of representatives for the cosets of $C^{11}$ in $\F_{3^5}^*$
so that we have \begin{equation}\label{Y}C^{11}\cdot Y=\F_{3^5}^*\end{equation}
Now consider the group $X$ of 11-th roots of unity which can be also viewed as the group $C^{22}$ of 22nd powers
so that we have $X=\langle x\rangle$ with $x=g^{22}$.
Clearly, $\{1,-1\}\cdot X$ is the group of 22nd roots of unity which can be also viewed as the group $C^{11}$ of 11th powers:
\begin{equation}\label{X}\{1,-1\}\cdot X=C^{11}.\end{equation}
Putting (\ref{Y}) and  (\ref{X}) together we get
\begin{equation}\label{XY}\{1,-1\}\cdot X\cdot Y=\F_{3^5}^*\end{equation}
which means that $V:=X\cdot Y$ is a half-set of $\F_{3^5}$.

Now, taking into account that $g^5=g^4+2$ by definition of $g$, one can check that the following identities hold
\begin{equation}\label{identities}
\sum_{i=0}^{10}y_{i}=0 \quad\quad{\rm and}\quad\quad \sum_{i=0}^{10}x^{i}y_{si}=0\quad \mbox{for $s\in\{1,2,3,5,7,9,10\}$}
\end{equation}
where the indices have to be understood modulo 11.

Consider the bijective map $$f: (i,j)\in \Z_{11}\times\Z_{11} \longrightarrow x^iy_j\in V$$
and let us calculate the sums $\sum_{P\in L}f(P)$ with $L$ a line of AG$(2,11)$ of slope
$s\in S:=\{0,1,2,3,5,7,9,10,\infty\}$.

If $L$ is a line of ${\mathcal P}_0$,  then $L=\{(i,j) \ | \ 0\leq i\leq10\}$ for some $j$ so that we have
$$\sum_{P\in L}f(P)=\sum_{i=0}^{10}x^{i}y_{j}=y_j\sum_{i=0}^{10}x^i=0$$
since $X$ is zero-sum (see again Fact 1 in \cite{BN1}).

If $L$ is a line of ${\mathcal P}_\infty$,  then $L=\{(i,j) \ | \ 0\leq j\leq10\}$ for some $i$ so that we have
$$\sum_{P\in L}f(P)=\sum_{j=0}^{10}x^{i}y_{j}=x_i\sum_{j=0}^{10}y^j=0$$
since (\ref{identities}) guarantees that $Y$ is zero-sum.

Now let $L$ be a line of ${\mathcal P}_s$ with $s\in\{1,2,3,5,7,9,10\}$. We have $L=\{(i,si+j) \ | \ 0\leq i\leq10\}$ for some $j$.
Set $t=s^{-1}j$ and note that we have
$$\sum_{P\in L}f(P)=\sum_{i=0}^{10}x^{i}y_{si+j}=x^{-t}\sum_{i=0}^{10}x^{t+i}y_{s(t+i)}=x^{-t}\sum_{i=0}^{10}x^{i}y_{si}=0$$
in view of (\ref{identities}).
We conclude that the map $f$ is a Heffter labeling of the $(121,11;9)$-net ${\mathcal N}(S)$ with
$S=\{0,1,2,3,5,7,9,10,\infty\}$. The assertion follows.
\end{proof}

The above $(121,11;9)$ Heffter net can be displayed by means of the following rank-one $11\times11$ matrix $M=(m_{i,j})$ over $\F_{3^5}$
where $m_{i,j}:=x^iy_j$ has been calculated taking into account again the basic identity $g^5=g^4+2$.
\scriptsize
$$
\begin{pmatrix}
10000& 0 1000& 1 1 0 1 0& 0 0 0 1 0& 1 1 0 0 1& 1 2
  1 0 2& 0 2 2 0 0& 1 1 0 2 2& 2 2 0 0 1& 2 2 2 2 0& 0 0 100\cr
  1 2 2 0 0& 0 1 2 2 0& 2 0 1 0 1& 1 0 0 1 1&
0 1 1 2 2& 2 0 1 0 0& 0 2 0 2 1& 1 2 1 1 0&
1 1 2 1 2& 1 0 1 1 0& 0 0 1 2 2\cr
1 1 2 2 1& 2 1 1 2 0& 1 2 2 2 1& 1 0 2 1 0&
0 0 0 0 2& 2 1 2 2 2& 1 0 1 0 1& 2 1 1 1 0&
0 2 0 1 0& 2 2 0 0 0& 0 2 1 1 2\cr
2 1 0 2 2& 1 2 1 0 1& 2 2 2 1 2& 2 2 1 2 2&
1 2 0 0 1& 2 1 2 2 1& 0 0 0 2 1& 0 2 1 2 0&
1 2 1 2 1& 2 0 2 1 0& 2 1 2 1 1\cr
2 1 0 1 0& 0 2 1 0 1& 1 2 1 0 0& 2 2 0 2 2&
0 2 0 1 2& 0 0 2 2 2& 1 1 0 2 1& 2 2 2 2 1&
2 2 1 2 0& 0 1 0 2 2& 2 0 2 1 1\cr
0 2 0 0 1& 2 0 2 0 1& 1 1 1 0 2& 2 2 2 0 0&
2 1 1 2 2& 0 2 2 0 1& 2 1 1 1 1& 0 1 1 1 2&
1 0 0 2 1& 0 0 2 1 0& 2 2 0 2 1\cr
2 0 1 1 2& 1 2 0 1 0& 2 2 2 1 0& 2 0 1 2 1&
2 1 1 0 2& 2 0 0 2 0& 2 0 1 2 2& 2 0 0 2 1&
2 0 2 2 1& 1 0 2 2 2& 0 1 2 0 1\cr
1 0 2 0 1& 2 1 0 2 1& 0 0 1 0 2& 0 2 2 1 0&
0 1 1 1 0& 1 1 1 2 2& 2 0 2 1 2& 0 2 1 2 1&
0 2 0 0 2& 1 1 1 0 1& 2 2 1 0 0\cr
0 0 1 1 0& 0 0 0 1 1& 1 2 1 2 0& 1 2 0 0 2&
1 1 0 2 0& 1 2 2 0 2& 1 0 0 2 0& 1 0 2 2 0&
1 1 1 1 1& 0 1 2 1 1& 2 0 0 0 2\cr
1 0 1 0 0& 0 1 0 1 0& 0 1 1 2 1& 2 0 0 1 1&
0 0 1 1 2& 2 0 2 2 2& 0 2 2 2 2& 0 2 1 0 0&
1 1 2 2 2& 0 2 1 1 1& 0 0 1 0 1\cr
1 2 0 2 2& 1 1 2 0 1& 1 2 0 0 0& 2 2 1 1 0&
2 2 1 0 1& 2 0 0 0 1& 0 1 0 1 1& 0 2 2 0 2&
1 2 0 2 1& 0 0 2 1 2& 2 1 1 2 1
  \end{pmatrix}
  $$

\normalsize
The point set is simply the set of all entries appearing in $M$. The parallel classes
are the rows, the columns and all the transversals having slope belonging
to the set $\{1,2,3,5,7,9,10\}$. By transversal of slope $u$ we mean any transversal
whose elements are the entries $m_{i,ui+j}$ with $0\leq i\leq 10$ and $j$ fixed.

Note that the 1st column is the group $X$ of 11th roots of unity and
that the 1st  row is the complete system of representatives $Y$
for the cosets of the 11th powers.

It might seem that there is a vague connection between the above net and the $(243,22,11,20)$ {\it strong external difference family} (SEDF)
constructed in \cite{WYFF} (see also \cite{JL}).
If $M^1$, $M^2$, \dots, $M^{11}$ are the columns of the matrix $M$ above and we set $B_i = M^i \ \cup \ (-M^i)$ for
$1\leq i\leq 11$, then $B_1, B_2, ..., B_{11}$ are precisely the blocks
of the mentioned SEDF.
However the rows and the columns of $M$ are {\it ordered} subsets of $\F_{3^5}$, not simply
subsets.

\medskip
The Heffter nets constructed in this section allow us to obtain immediately Theorem \ref{MOHSnet}
in view of Proposition \ref{MOHS=HS}.

\section{From Heffter spaces to mutually orthogonal cycle systems}

It is known that Heffter arrays are strongly related with graph decompositions obtainable via difference methods
(see Section 5 of \cite{DP} for the details). Here we will use Heffter spaces to construct sets of {\it mutually orthogonal cycle systems}.
First, we need to introduce some background on this topic.

As it is standard, speaking of a $k$-cycle $(x_0,x_1,\dots,x_{k-1})$ where the $x_i$'s
are pairwise distinct elements of any set $X$, we will mean the graph whose vertices are $x_0, x_1, \dots, x_{k-1}$ and whose
edges are $\{x_0,x_1\}$, $\{x_1,x_2\}$, \dots, $\{x_{k-1},x_0\}$.

A {\it $k$-cycle system of order $2v+1$} is a set $\mathcal C$ of $k$-cycles whose edges partition the edge set of $K_{2v+1}$, the complete graph on $2v+1$ vertices.
Such a system is said to be {\it $G$-regular} if the vertex set of $K_{2v+1}$ is an additive group $G$ and we have
$C+g\in{\mathcal C}$ for every pair $(C,g)\in {\mathcal C}\times G$. In other words if, up to isomorphism, $G$ is an
{\it automorphism group} of $\mathcal C$.

A 3-cycle system is usually called a {\it Steiner triple system} and one usually denotes it by STS$(2v+1)$ if its order is $2v+1$.
It is evident that it is equivalent to a S$(2,3,2v+1)$, this is why in this case the vertices of $K_{2v+1}$ are usually called {\it points} and the
3-cycles are called {\it blocks} or {\it triples}.
The literature on Steiner triple systems is huge and we refer to \cite{CR} for a comprehensive textbook on the topic.

The existence problem for a $k$-cycle system of order $2v+1$ has been completely solved in \cite{AG} for $k$ odd and in \cite{Sajna}
for $k$ even. See also \cite{v<3k} for an alternative purely algebraic proof for the odd case.

Cycle systems are special {\it graph decompositions}. Given a subgraph $\Gamma$ of a graph $K$, a $\Gamma$-decomposition of $K$ is a set $\mathcal D$
of graphs isomorphic to $\Gamma$ whose edges partition $E(K)$. Thus a $k$-cycle system of order $2v+1$ is a $\Gamma$-decomposition of $K_{2v+1}$ where $\Gamma$
is the $k$-cycle. In general, two graph decompositions $\mathcal D$ and $\mathcal D'$ of the same graph $K$ are said to be {\it orthogonal} if any graph of $\mathcal D$
has at most one edge in common with any graph of $\mathcal D'$ (see, e.g., \cite{CY})
and a multitude of authors actually adhered to this definition.
Yet, concerning the orthogonality of $k$-cycle systems, we think it is appropriate to distinguish the case $k=3$ from the case $k>3$
in order to put some order in the literature.

\subsection{Orthogonal Steiner triple systems}

According to the general definition of orthogonal graph decompositions, two Steiner triple systems should be said orthogonal
precisely when they do not have any triple in common, i.e., they are disjoint. So it was done, for instance, in the recent papers \cite{BCP,KY}.
On the other hand this is in contrast
with an extensive literature on {\it orthogonal Steiner triple systems} as introduced in \cite{shaug}.
In this literature, two {\it Steiner triple systems} ${\mathcal C}$ and ${\mathcal C}'$
are defined to be orthogonal if, besides being disjoint,  satisfy the following demanding condition:
\begin{equation}\label{superorthogonal}
\{u,v,a\}, \{x,y,a\} \in {\mathcal C} \quad{\rm and}\quad \{u,v,w\}, \{x,y,z\} \in {\mathcal C}' \Longrightarrow w\neq z.
\end{equation}
Here, just in order to avoid confusion, we say that two STSs are orthogonal when they are disjoint (hence orthogonal in the usual sense
used for graph decompositions), and that they are {\it super-orthogonal}
when they are orthogonal in the sense of \cite{shaug}.
We prefer, however, to re-elaborate condition (\ref{superorthogonal}) still in terms of orthogonality in the general sense as follows.
Observe that if ${\mathcal C}$ is a STS$(v)$ with point set $V$ and $x$ is one of its points, then the set $N(x)$ of all pairs $\{y,z\}$ such that $\{x,y,z\}$ is a triple of $\mathcal C$
is a near 1-factor of $K_v$. Also, the set ${\mathcal N}=\{N(x) \ | \ x\in V\}$ is a near 1-factorization of $K_v$
that we call {\it the near 1-factorization associated with $\mathcal C$}.
It is then clear that the definition of super-orthogonality can be equivalently reformulated as follows.
\begin{defn}
Two Steiner triple systems are super-orthogonal when they do not have any triple in common and their associated near 1-factorizations
are orthogonal.
\end{defn}

In view of the well celebrated results about the existence of {\it large sets of Steiner triple systems} \cite{Lu1,Lu2,T}
we can say that the maximum number of mutually orthogonal STS$(v)$ is $v-2$ whichever is $v$.

Concerning the maximum number $\omega(v)$ of mutually super-orthogonal STS$(v)$ we have the following:
\begin{itemize}
\item $\omega(v)=1$ for $v=3,9$ (see \cite{MN});

\item $\omega(v)=2$ for $v=7,13, 15$ (see \cite{Gib});

\item $\omega(v)\geq2$ for all admissible $v\geq19$  (see \cite{CGMMR});

\item $\omega(v)\geq3$ for all admissible $v\geq19$ with 24 possible exceptions (see \cite{DDL}).

\item $\omega(v)>n$ for $v$ a prime power sufficiently large with respect to $n$ (see \cite{Gross}).
\end{itemize}

\subsection{Orthogonal $k$-cycle systems with $k>3$}

For $k>3$, according to the general definition of orthogonal graph decompositions,
two $k$-cycle systems ${\mathcal C}$, ${\mathcal C}'$ of the same order $2v+1$ are orthogonal if
any cycle  $C\in {\mathcal C}$ shares at most one edge with any cycle $C'\in {\mathcal C}'$.

Apart from special instances of the pair $(k,2v+1)$, to establish the maximum number $\mu(k,2v+1)$ of mutually orthogonal $k$-cycle systems of order $2v+1$
appears at the moment quite hard. Even to establish a non-trivial lower bound appears to be difficult.

For now, it is known that we have $\mu(k,2kn+1)\geq2$ in the following cases:
\begin{itemize}
\item $4\leq k\leq 10$ and $kn\equiv0$ or 3 (mod 4) (see \cite{CMPP});

\item $k=5$ and $3\leq n\leq 100$ (see \cite{DM});

\item $k\equiv0$ (mod 4) (see \cite{BDCY});
\item $k\equiv3$ (mod 4) and $n\equiv1$ (mod 4) (see \cite{BDCY});
\item $k\equiv3$ (mod 4) and a sufficiently large $n\equiv0$ (mod 4) (see \cite{BDCY}).
\end{itemize}

We also have:
\begin{itemize}
\item $\mu(k,v)\geq2$ for $4\leq k\leq 9$ and any admissible $v$ (see \cite{KY}).
\end{itemize}

A greater lower bound has  been established for $k$ even in \cite{BCP}:
\begin{itemize}
\item $\mu(k,2kn+1)\geq \begin{cases}4n & {\rm if} \ k=4 \medskip\cr {n\over 4k-2}-1 & {\rm if} \ k\equiv0 \ {\rm (mod \ 4)}, \ k>4 \medskip\cr {n\over 24k-18}-1 & {\rm if} \ k\equiv2 \
{\rm (mod \ 4)}\end{cases}$
\end{itemize}



\medskip
Here, exploiting the results of the previous sections, we will be able to prove the existence of
an arbitrarily large set of mutually orthogonal $k$-cycle systems for any odd $k$.
For this we need again some difference methods.

The {\it list of differences} of a graph $\Gamma$ with vertices in an additive group $G$ is the multiset
$$\Delta \Gamma=\bigcup_{e\in E(\Gamma)}\Delta e$$ where $\Delta e$ is the list of differences of the
edge $e$ as defined in Section 4: if $e=\{x,y\}$, then $\Delta e=\{x-y,y-x\}$.
In other words, $\Delta \Gamma$ is the list of all possible differences between two adjacent vertices of $\Gamma$.

Note, in particular, that a $k$-cycle $C=(c_0,c_1,\dots,c_{k-1})$ with vertices in a group $G$ has list of differences
$\Delta C=\{c_{i+1}-c_{i}, \ c_{i}-c_{i+1}  \ | \ 0\leq i\leq k-1\}$ where it is understood that $c_k=c_0$.

The following basic fact is just a special consequence of a more general result concerning
{\it $G$-regular graph decompositions} obtainable via difference methods (see, e.g., Theorem 2.1 in \cite{BP2}).
\begin{prop}\label{DF}
If  $G$ is an additive group of order $2kn+1$, then the existence of a $G$-regular $k$-cycle system is equivalent to a $n$-set ${\mathcal F}=\{C_1,\dots,C_n\}$
of $k$-cycles with vertices in $G$ such that  $\Delta C_1 \ \cup \ \dots \ \cup \ \Delta C_n=G\setminus\{0\}$.
\end{prop}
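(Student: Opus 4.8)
The plan is to prove the two implications separately, exploiting the translation action of $G$ on the set of $k$-cycles with vertices in $G$ together with the elementary fact that $\Delta(C+g)=\Delta C$ for every $g\in G$: translating a cycle shifts its vertices but leaves its list of differences unchanged. A second elementary fact I will use throughout the converse is that $\gcd(k,2kn+1)=1$, since $2kn+1\equiv1\pmod{k}$.

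For the implication from a difference configuration to a cycle system, I would start with an $n$-set $\mathcal{F}=\{C_1,\dots,C_n\}$ as in the statement and define $\mathcal{C}=\{C_i+g : 1\le i\le n,\ g\in G\}$, which is $G$-invariant by construction and consists of genuine $k$-cycles (each a translate of some $C_i$). The core step is to check that the edges of the cycles in $\mathcal{C}$ partition $E(K_{2kn+1})$. Given an arbitrary edge $\{x,y\}$ of $K_{2kn+1}$ I set $d=x-y\neq0$; since the hypothesis says that $d$ occurs exactly once in the multiset $\Delta C_1\cup\dots\cup\Delta C_n$, there is a unique pair consisting of a base cycle $C_i$ and one of its edges $\{c_a,c_{a+1}\}$ with $c_{a+1}-c_a=d$ (and $-d$ then arises from the reverse orientation of that same edge, being itself present only once). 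Solving $\{c_a+g,c_{a+1}+g\}=\{x,y\}$ yields a unique translate $g=y-c_a$, so $\{x,y\}$ lies on exactly one cycle of $\mathcal{C}$. A consistency count confirms this: developing the $n$ base cycles contributes $nk(2kn+1)$ edges, which equals $\binom{2kn+1}{2}=kn(2kn+1)$, the number of edges of $K_{2kn+1}$. Hence $\mathcal{C}$ is a $G$-regular $k$-cycle system.

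For the converse I would let $\mathcal{C}$ be a $G$-regular $k$-cycle system and analyse the orbits of the translation action of $G$ on $\mathcal{C}$. The step I expect to be the main obstacle is showing that every orbit has full length $2kn+1$, i.e.\ that no nonzero $h\in G$ stabilizes a cycle. If $C+h=C$ with $h\neq0$, then translation by $h$ permutes the $k$ vertices of $C$ and acts freely on $G$, so the order $d=o(h)$ divides $k$; but $d$ also divides $|G|=2kn+1$, and since $\gcd(k,2kn+1)=1$ this forces $d=1$, a contradiction. Consequently the action is semiregular, there are exactly $|\mathcal{C}|/(2kn+1)=n$ orbits, and I choose one representative $C_i$ from each.

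It then remains to verify that $\Delta C_1\cup\dots\cup\Delta C_n=G\setminus\{0\}$. Because the cycles of $\mathcal{C}$ partition $E(K_{2kn+1})$, every nonzero $d$ arises as an oriented edge-difference exactly $2kn+1$ times over all of $\mathcal{C}$ (there are precisely $2kn+1$ edges with difference-pair $\{d,-d\}$). On the other hand, translation invariance of $\Delta$ together with the full orbit length gives that $\bigcup_{g\in G}\Delta(C_i+g)$ is $2kn+1$ copies of $\Delta C_i$, so the multiplicity of $d$ in $\bigcup_{C\in\mathcal{C}}\Delta C$ equals $(2kn+1)\sum_{i=1}^{n}m_{C_i}(d)$, where $m_{C_i}(d)$ denotes the multiplicity of $d$ in $\Delta C_i$. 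Comparing the two counts yields $\sum_{i=1}^{n}m_{C_i}(d)=1$ for every nonzero $d$, which is exactly the statement that the lists $\Delta C_1,\dots,\Delta C_n$ together cover each nonzero element of $G$ precisely once. This completes the plan.
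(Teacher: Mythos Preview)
Your proof is correct. The paper does not give its own proof of this proposition: it simply cites it as a ``basic fact'' that is a special consequence of a more general result on $G$-regular graph decompositions, referring to Theorem~2.1 in \cite{BP2}. What you have written is exactly the standard orbit argument underlying that cited theorem, made explicit for $k$-cycles: in the forward direction you identify the unique base edge and translate realizing a given edge of $K_{2kn+1}$, and in the converse you use the key arithmetic fact $\gcd(k,2kn+1)=1$ to force every $G$-orbit on $\mathcal{C}$ to have full length $2kn+1$, hence exactly $n$ orbits, after which the translation-invariance of $\Delta$ pins down the multiplicities. So your approach is not different from the intended one, merely self-contained where the paper defers to the literature.
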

A set $\mathcal F$ as in the statement of Proposition \ref{DF} is said to be a set of {\it base cycles} for the
related $G$-regular $k$-cycle system $\mathcal C$ since we have ${\mathcal C}=\{C_i+g \ | \ 1\leq i\leq n; g\in G\}$.


Recall that $B=\{b_0,b_1,\dots,b_{k-1}\}$ is a simple subset of $G$ (see Definition \ref{simple}) if the associated sequence of partial
sums $C=(c_0,c_1,\dots,c_{k-1})=(b_0,b_0+b_1,b_0+b_1+b_2,...)$ does not have repeated elements. In this case $C$ may be interpreted as a $k$-cycle and
noting that $c_{i}-c_{i-1}=b_{i}$ for $1\leq i\leq k-1$ we can write
$$\Delta C=\pm\{b_1,b_2,\dots,b_{k-1},c_{k-1}-b_0\}.$$
Thus, considering that $c_{k-1}$ is the sum of all the elements of $B$, when $B$ is zero-sum we have $\Delta C=B \ \cup \ -B$.

Hence, given that the blocks of a Heffter system are all zero-sum, if ${\mathcal P}=\{B_1,\dots,B_n\}$ is a simple $(v,k)$
Heffter system on a half-set $V$ of $G$, then we have
$$\Delta C_1 \ \cup \ \dots \ \cup \ \Delta C_n=(B_1 \ \cup \ \dots \ \cup \ B_n) \ \cup \ -(B_1 \ \cup \ \dots \ \cup \ B_n)$$
where $C_i$ is the ``partial sum cycle" associated with $B_i$ for $1\leq i\leq n$.
At this point, recalling that the blocks of $\mathcal P$ partition $V$ and that $V$ is a half-set of $G$, we see that
$$\Delta C_1 \ \cup \ \dots \ \cup \ \Delta C_n=V \ \cup \ -V=G\setminus\{0\}.$$
Thus $\{C_1,\dots,C_n\}$ is a set of base cycles for a $G$-regular $k$-cycle system in view of Proposition \ref{DF}.


So we have the following.
\begin{prop}\label{SHS->CS}
If ${\mathcal P}$ is a simple $(v,k)$ Heffter system over $G$, then the partial-sum cycles associated with the blocks
of $\mathcal P$ form a set of base cycles for a $G$-regular $k$-cycle system.
\end{prop}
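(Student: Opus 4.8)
The plan is to assemble the proof directly from the observations made in the paragraphs immediately preceding the statement; essentially nothing new is required. Write $\mathcal{P} = \{B_1, \dots, B_n\}$ for a simple $(v,k)$ Heffter system on a half-set $V$ of $G$, with $B_i = \{b_{i,0}, \dots, b_{i,k-1}\}$ taken in the order witnessing simplicity, and let $C_i = (c_{i,0}, \dots, c_{i,k-1})$ be the corresponding sequence of partial sums, $c_{i,j} = \sum_{h=0}^{j} b_{i,h}$. Note that $n = v/k$, so $|G| = 2v+1 = 2kn+1$.

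First I would record that each $C_i$ is an honest $k$-cycle: by Definition~\ref{simple} its entries are pairwise distinct, so it is a cycle on $k$ vertices of $G$. Next I would compute $\Delta C_i$. Using $c_{i,j} - c_{i,j-1} = b_{i,j}$ for $1 \leq j \leq k-1$ together with the wrap-around edge $\{c_{i,k-1}, c_{i,0}\}$, one gets $\Delta C_i = \pm\{b_{i,1}, \dots, b_{i,k-1}, c_{i,k-1} - b_{i,0}\}$. The crucial input now is that $B_i$ is zero-sum, which is part of the definition of a Heffter system: hence $c_{i,k-1} = 0$, the last entry becomes $-b_{i,0}$, and therefore $\Delta C_i = B_i \cup (-B_i)$.

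Finally I would take the union over all blocks. Since $\{B_1, \dots, B_n\}$ partitions $V$, we have $\bigcup_{i=1}^n \Delta C_i = \left(\bigcup_{i=1}^n B_i\right) \cup -\left(\bigcup_{i=1}^n B_i\right) = V \cup (-V)$, and this equals $G \setminus \{0\}$ by the defining property of a half-set. Since $|G| = 2kn+1$, Proposition~\ref{DF} then applies and shows that $\{C_1, \dots, C_n\}$ is a set of base cycles for a $G$-regular $k$-cycle system, which is exactly the assertion.

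I do not expect any real obstacle here: every ingredient is elementary and has already been worked out in the running text above the statement. The only place calling for the slightest care is the sign and orientation bookkeeping in $\Delta C_i$ — checking that once $c_{i,k-1}=0$ is substituted the wrap-around edge contributes precisely $\pm b_{i,0}$, so that the full list of differences of $C_i$ is $B_i \cup (-B_i)$ rather than some shifted variant — but this follows at once from the telescoping of the partial sums.
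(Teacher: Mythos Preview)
Your proof is correct and follows essentially the same line as the paper: you verify that simplicity makes each $C_i$ an honest $k$-cycle, that the zero-sum condition forces $\Delta C_i=B_i\cup(-B_i)$, and that the half-set property then gives $\bigcup_i\Delta C_i=G\setminus\{0\}$, so Proposition~\ref{DF} applies. This is exactly the argument the paper lays out in the paragraphs immediately preceding the proposition.
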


One could show that the converse is also true when $G$ has order $2kn+1$ for some $n$: in this case any $G$-regular $k$-cycle system
is derived from a $(v,k)$ Heffter system over $G$. This is why we can say that a $(v,k)$ Heffter system
over $\Z_{2v+1}$ exists: as recalled in the introduction,
the existence of a $\Z_{2kn+1}$-regular $k$-cycle system has been established for all possible pairs $(v,k)$ (see \cite{BurDel} and \cite{BGL}).

Let ${\mathcal F}$ and ${\mathcal F}'$ be the sets of  base cycles arising, via Proposition \ref{SHS->CS}, from two orthogonal
simple Heffter systems ${\mathcal P}$ and ${\mathcal P}'$, respectively. Also, let ${\mathcal C}$ and ${\mathcal C}'$ be
the respective $k$-cycle systems arising from them.
Assume that there is a cycle of $\mathcal C$, say $C+g$ with $(C,g)\in{\mathcal F}\times G$, sharing
two edges $e_1$, $e_2$ with a cycle of ${\mathcal C}'$, say $C'+h$ with $(C',h)\in{\mathcal F}'\times G$.
Then $\Delta e_1 \ \cup \ \Delta e_2$ is contained both in $\Delta(C+g)=\Delta C$ and in
$\Delta(C'+h)=\Delta C'$. Now recall that we have $\Delta C=B \ \cup \ -B$ where $B$ is the block of ${\mathcal P}$
from which ${\mathcal C}$ derives and, analogously, $\Delta C'=B' \ \cup \ -B'$ where $B'$ is the block of ${\mathcal P}'$
from which ${\mathcal C}'$ derives. Thus we have $\Delta e_1 \ \cup \ \Delta e_2\subset (B \ \cap \ B') \ \cup \ -(B \ \cap \ B')$.
Then two of the four elements of $\Delta e_1 \ \cup \ \Delta e_2$ necessarily are in $B \ \cap \ B'$ contradicting that
any block of ${\mathcal P}$ intersects any block of ${\mathcal P}'$ in at most one element.

We conclude that  ${\mathcal C}$ and ${\mathcal C}'$ are orthogonal. Thus we can state the following.
\begin{prop}\label{SMOHS->MOCS}
If there exists a MOHS$(v,k;r)$ whose members are all simple, then $\mu(k,2v+1)\geq r$.
\end{prop}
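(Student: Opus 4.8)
The plan is to assemble the statement from the two ingredients already developed in this section: the single-system construction of Proposition~\ref{SHS->CS} and the orthogonality computation carried out in the paragraph preceding the statement. First I would fix a $(v,k;r)$-MOHS $\{\mathcal P_1,\dots,\mathcal P_r\}$, all of whose members are simple, realized on a common half-set $V$ of an abelian group $G$ of order $2v+1$. Applying Proposition~\ref{SHS->CS} to each $\mathcal P_i$ separately turns the partial-sum cycles of the blocks of $\mathcal P_i$ into a set of base cycles $\mathcal F_i$ for a $G$-regular $k$-cycle system $\mathcal C_i$ of order $2v+1$. This produces $r$ such systems at once, so the only thing left to establish is that they are pairwise orthogonal: once that is done, $\mu(k,2v+1)\geq r$ directly by the definition of $\mu$, and the $r$ systems are automatically distinct since a cycle shares all $k\geq3$ of its edges with itself.

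For the orthogonality I would argue by contradiction on an arbitrary pair $i\neq j$, replaying the computation given above for two systems. Suppose some cycle $C+g$ of $\mathcal C_i$ and some cycle $C'+h$ of $\mathcal C_j$ share two distinct edges $e_1,e_2$, where $C$ and $C'$ are the partial-sum cycles of blocks $B\in\mathcal P_i$ and $B'\in\mathcal P_j$. Since translation leaves the list of differences unchanged, $\Delta e_1\cup\Delta e_2\subseteq\Delta C\cap\Delta C'$, and by the observation recalled just before Proposition~\ref{SHS->CS} one has $\Delta C=B\cup-B$ and $\Delta C'=B'\cup-B'$ because the blocks are zero-sum. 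As $V$ is a half-set, each edge contributes exactly one of its two opposite differences to $V$, and that representative, lying in $(B\cup-B)\cap V=B$ and likewise in $B'$, belongs to $B\cap B'$.

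The hard part, and the only step that is not pure bookkeeping, will be confirming that the two representatives arising from $e_1$ and $e_2$ are genuinely distinct, so that one really obtains $|B\cap B'|\geq2$. For this I would invoke the fact that for a zero-sum simple block the partial-sum cycle has difference list exactly $\pm B$, its $k$ representatives in $V$ being the $k$ distinct elements of $B$; hence distinct edges of $C$ yield distinct elements of $V$. Since $e_1,e_2$ correspond to distinct edges of $C$, their $V$-representatives $d_1,d_2\in B\cap B'$ satisfy $d_1\neq d_2$, giving $|B\cap B'|\geq2$ and contradicting the orthogonality of $\mathcal P_i$ and $\mathcal P_j$. Therefore no two cycles share more than one edge, $\mathcal C_i$ and $\mathcal C_j$ are orthogonal, and running this over all $\binom{r}{2}$ pairs shows $\{\mathcal C_1,\dots,\mathcal C_r\}$ to be a set of $r$ mutually orthogonal $k$-cycle systems of order $2v+1$, whence $\mu(k,2v+1)\geq r$.
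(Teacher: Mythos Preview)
Your proposal is correct and follows essentially the same argument as the paper: apply Proposition~\ref{SHS->CS} to each simple Heffter system to obtain $r$ $G$-regular $k$-cycle systems, then derive a contradiction from two shared edges via $\Delta C=B\cup(-B)$ and the half-set property, forcing $|B\cap B'|\geq2$. You are in fact slightly more careful than the paper in explicitly justifying that the two $V$-representatives coming from $e_1$ and $e_2$ are distinct (because the $k$ edges of the partial-sum cycle of a zero-sum block $B$ have as $V$-representatives precisely the $k$ distinct elements of $B$), a point the paper leaves implicit when it speaks of ``two of the four elements''.
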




Taking into account Proposition \ref{SMOHS->MOCS} and the constructions for MOHSs obtained in the previous sections we can get the following.
\begin{thm} \label{orthogonalcycles}
We have:
\begin{itemize}
\item[(i)]$\mu(k,2v+1)\geq k$ for $(k,2v+1)\in\{(5,71),(7,211),(11,419),(13,599)\}$;
\item[(ii)] $\mu(5,151)\geq10$;
\item[(iii)] $\mu(3n,18n^2+1)\geq4$ for $n\in\{3,7,9\}$;
\item[(iv)] $\mu(11,243)\geq 9$;
\item[(v)] $\mu(k,2kw+1)\geq \lceil{w\over4k^4}\rceil$ whenever $2kw+1$ is a prime power and $kw$ is odd.
\end{itemize}
\end{thm}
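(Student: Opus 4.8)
The whole theorem will be obtained by feeding the combinatorial objects built in Sections 3--5 into Proposition \ref{SMOHS->MOCS}: to get $\mu(k,2v+1)\ge r$ it is enough to produce a $(v,k;r)$-MOHS whose members are all simple, equivalently (Proposition \ref{MOHS=HS}) a simple $(v,k;r)$ Heffter configuration. So the plan is, item by item, to name such an object with the prescribed parameters and to make sure it can be ordered to be simple.

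For (i), for each listed pair I would take the simple $(\F_q^\Box,k)$ Heffter ruler displayed in the table at the end of Section 4, with $q=2v+1$ (these are exactly the entries $q_{min}=71,211,419,599$ for $k=5,7,11,13$). By Corollary \ref{HDP->HS2} with $n=1$ it produces a simple $({q-1\over2},k;k)$ Heffter configuration, hence a simple $(v,k;k)$-MOHS, and Proposition \ref{SMOHS->MOCS} gives $\mu(k,2v+1)\ge k$. For (ii) I would take the $(\F_{151}^\Box,5;2)$ Heffter difference packing $\{\{1,36,58,110,97\},\{1,78,22,139,62\}\}$ of Section 4; since $k=5$ each block can be ordered with all partial sums distinct, so this is a \emph{simple} difference packing, and Corollary \ref{HDP->HS2} converts it into a simple $(75,5;10)$ Heffter configuration. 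As $2\cdot 75+1=151$, Proposition \ref{SMOHS->MOCS} yields $\mu(5,151)\ge 10$.

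For (iii) and (iv) I would use the Heffter nets of Section 5: the $(9n^2,3n;4)$ nets over $\F_{18n^2+1}$ for $n\in\{3,7,9\}$ and the $(121,11;9)$ net over $\F_{3^5}$. Each of these has point set $X\cdot Y$ and labelling $f(i,j)=x^iy_j$, and I must check that every one of its parallel classes can be given an ordering making it a simple Heffter system. For the classes whose blocks are cosets $yX$ of the multiplicative subgroup $X$ this is automatic: ordering such a coset as the geometric progression $(y,yx,\dots,yx^{|X|-1})$ makes its partial sums $y(x^{m+1}-1)/(x-1)$ pairwise distinct because $x$ has order $|X|$. For the remaining parallel classes (the ``columns'' $x^iY$ and the diagonal classes) I would verify simplicity directly; this is a finite check for each of the four nets, and for the $n=3$ net, whose block size is $9$, it is in any case guaranteed by the known validity of the simplicity conjecture for $6\le k\le 10$. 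With this in hand each net is a simple $(v,k;r)$-MOHS for the right $(v,k,r)$ (note $18n^2+1=2\cdot 9n^2+1$ and $3^5=2\cdot 121+1$), and Proposition \ref{SMOHS->MOCS} gives the four inequalities of (iii)--(iv).

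Finally, (v) is the main point (and coincides with Theorem \ref{final}). Put $v=kw$, which is odd since $kw$ is, and $r=\lceil w/4k^4\rceil$. If $r=1$ the claim $\mu(k,q)\ge 1$ just says a $k$-cycle system of order $q=2kw+1\equiv 1\pmod{2k}$ exists, which holds by \cite{AG}. If $r\ge 2$, then from $r=\lceil w/4k^4\rceil$ we get $w>4k^4(r-1)\ge 4k^4$; and one checks $w\ge 4k^4\lceil r/k\rceil$ by cases, namely $w>4k^4=4k^4\lceil 2/k\rceil$ when $r=2$, and, when $r\ge 3$, $w>4k^4(r-1)\ge 4k^3r+4k^4>4k^4\lceil r/k\rceil$ using $k\ge 3$ (so $r\ge 2k/(k-1)$) and $\lceil r/k\rceil<r/k+1$. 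Hence Corollary \ref{asymptotic2} applies --- note that $v=kw$ odd forces $q\equiv 2k+1\pmod{4k}$ automatically --- and produces a simple $(v,k;r)$ Heffter configuration, i.e.\ a simple $(v,k;r)$-MOHS; Proposition \ref{SMOHS->MOCS} then gives $\mu(k,2kw+1)\ge r=\lceil w/4k^4\rceil$. I expect the only non-routine spots to be the blanket requirement of simplicity --- which is precisely why Sections 4--5 take pains to build \emph{simple} packings and nets, and which forces the small finite checks for the diagonal classes in (iii)--(iv) --- and the elementary but slightly fussy inequality manipulation in (v); everything else is assembly.
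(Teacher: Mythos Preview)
Your proposal is correct and follows the same route as the paper: each item is obtained by feeding the appropriate Heffter difference packing or Heffter net from Sections~4--5 into Corollary~\ref{HDP->HS2} (for (i) and (ii)) or directly (for (iii) and (iv)), checking simplicity, and then invoking Proposition~\ref{SMOHS->MOCS}; and (v) is deduced from Corollary~\ref{asymptotic2}. You are in fact more careful than the paper on two points: you supply the small inequality argument showing that $r=\lceil w/4k^4\rceil$ actually satisfies the hypothesis $w\ge 4k^4\lceil r/k\rceil$ of Corollary~\ref{asymptotic2} when $r\ge 2$, and you handle the residual case $r=1$ (where that hypothesis can fail) by appealing to the bare existence of a $k$-cycle system of order $2kw+1$.
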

\begin{proof}
For (i), it enough to use the simple $(\F_{2v+1}^\Box,k)$ Heffter rulers exhibited after Corollary \ref{asymptotic3}.
For (ii) we have to use the $(\F_{151}^\Box,5;2)$ Heffter difference packing considered at the end of the fourth section.
For (iii) and (iv) it is enough to consider the Heffter nets constructed in Section 5 checking that they are all simple.
Finally, (v) is a consequence of Corollary \ref{asymptotic2}.
\end{proof}

Note, in particular, that Theorem \ref{final} is nothing but Theorem \ref{orthogonalcycles}(v).

\begin{ex} Using the $(35,5;5)$-MOHS of Example \ref{35} we get five mutually orthogonal pentagon systems ${\mathcal C}_1$, \dots, ${\mathcal C}_5$ of order 71.
The related sets of base cycles ${\mathcal F}_1$, \dots, ${\mathcal F}_5$ are displayed below.
\small
\medskip\noindent
\begin{center}
\begin{tabular}{|l|c|r|c|r|c|r|c|r|c|r|c|r|}
\hline {\quad\quad\quad$\mathcal {F}_1$}    \\
\hline $\{1, 25, 50, 22, 0\}$\\
\hline $\{45, 60, 49, 67, 0\}$  \\
\hline $\{37, 2, 4, 33, 0\}$\\
\hline $\{32, 19, 38, 65, 0\}$  \\
\hline $\{20, 3, 6, 14, 0\}$\\
\hline $\{48, 64, 57, 62, 0\}$ \\
\hline $\{30, 40, 9, 21, 0\}$ \\
\hline
\end{tabular}\quad
\begin{tabular}{|l|c|r|c|r|c|r|c|r|c|r|c|r|}
\hline {\quad\quad\quad$\mathcal{F}_2$}    \\
\hline $\{49, 18, 36, 13, 0\}$  \\
\hline $\{4, 29, 58, 17, 0\}$\\
\hline $\{38, 27, 54, 55, 0\}$ \\
\hline $\{6, 8, 16, 61, 0\}$\\
\hline $\{57, 5, 10, 47, 0\}$  \\
\hline $\{9, 12, 24, 56, 0\}$\\
\hline $\{50, 43, 15, 35, 0\}$\\
\hline
\end{tabular}\quad
\begin{tabular}{|l|c|r|c|r|c|r|c|r|c|r|c|r|}
\hline {\quad\quad\quad$\mathcal{F}_3$}    \\
\hline $\{58, 30, 60, 69, 0\}$\\
\hline $\{54, 1, 2, 52, 0\}$\\
\hline $\{16, 45, 19, 68, 0\}$ \\
\hline $\{10, 37, 3, 7, 0\}$  \\
\hline $\{24, 32, 64, 31, 0\}$\\
\hline $\{15, 20, 40, 46, 0\}$\\
\hline $\{36, 48, 25, 11, 0\}$\\
\hline
\end{tabular}\quad
\begin{tabular}{|l|c|r|c|r|c|r|c|r|c|r|c|r|}
\hline {\quad\quad\quad$\mathcal{F}_4$}    \\
\hline $\{2, 50, 29, 44, 0\}$\\
\hline $\{19, 49, 27, 63, 0\}$\\
\hline $\{3, 4, 8, 66, 0\}$ \\
\hline $\{64, 38, 5, 59, 0\}$  \\
\hline $\{24, 32, 64, 31, 0\}$\\
\hline $\{15, 20, 40, 46, 0\}$\\
\hline $\{36, 48, 25, 11, 0\}$\\
\hline
\end{tabular}\quad
\begin{tabular}{|l|c|r|c|r|c|r|c|r|c|r|c|r|}
\hline {\quad\quad\quad$\mathcal{F}_5$}    \\
\hline $\{58, 30, 60, 69, 0\}$\\
\hline $\{54, 1, 2, 52, 0\}$\\
\hline $\{16, 45, 19, 68, 0\}$ \\
\hline $\{10, 37, 3, 7, 0\}$  \\
\hline $\{24, 32, 64, 31, 0\}$\\
\hline $\{15, 20, 40, 46, 0\}$\\
\hline $\{36, 48, 25, 11, 0\}$\\
\hline
\end{tabular}
\end{center}
\end{ex}
\normalsize


Finally note that Theorem \ref{orthogonalcycles}(v) and the theorem of Dirichlet on arithmetic progressions
allow us to state the following.
\begin{thm}
Given any odd $k\geq3$ and any $r>0$, there are infinitely many values of $v$ for which there exists
a set of $r$ mutually orthogonal $k$-cycle systems of order $2v+1$.
\end{thm}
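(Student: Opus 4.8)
The plan is to obtain the statement as a direct consequence of Theorem~\ref{orthogonalcycles}(v) combined with Dirichlet's theorem on primes in arithmetic progressions; since all the combinatorial substance is already contained in Theorem~\ref{orthogonalcycles}(v) (this result being the $k$-cycle-system analogue of Corollary~\ref{asymptotic3}), what remains is elementary number-theoretic bookkeeping. Recall that Theorem~\ref{orthogonalcycles}(v) asserts $\mu(k,2kw+1)\geq\lceil\frac{w}{4k^4}\rceil$ whenever $2kw+1$ is a prime power and $kw$ is odd. Hence it suffices to produce infinitely many primes $q=2kw+1$ for which $kw$ is odd and $w>4k^4(r-1)$: then $\lceil\frac{w}{4k^4}\rceil\geq r$, and putting $v=kw=\frac{q-1}{2}$ we obtain a set of $r$ mutually orthogonal $k$-cycle systems of order $q=2v+1$, with $q$ ranging over an infinite set.

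First I would fix the residue class $q\equiv 2k+1\pmod{4k}$. Because $k$ is odd, a prime $q$ in this class automatically has $w=\frac{q-1}{2k}$ odd, hence $kw$ odd: indeed $q-1\equiv 2k\pmod{4k}$ means $q-1=2k(2m+1)$ for some integer $m\geq0$, so $w=2m+1$. The only elementary point to verify is $\gcd(2k+1,4k)=1$, which holds because $2k+1$ is odd (so coprime to $4$) and $\gcd(2k+1,k)=\gcd(1,k)=1$. Dirichlet's theorem then provides infinitely many primes $q\equiv 2k+1\pmod{4k}$. Discarding the finitely many of them with $q\leq 8k^5 r$, each remaining prime satisfies $q-1\geq 8k^5 r>8k^5(r-1)$, whence $w=\frac{q-1}{2k}>4k^4(r-1)$ and therefore $\lceil\frac{w}{4k^4}\rceil\geq r$.

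Finally, for each of these infinitely many primes $q$, the integer $v=\frac{q-1}{2}=kw$ satisfies: $2v+1=q$ is a prime power, $kw$ is odd, and $\lceil\frac{w}{4k^4}\rceil\geq r$; so Theorem~\ref{orthogonalcycles}(v) gives $\mu(k,2v+1)\geq r$, i.e., a set of $r$ mutually orthogonal $k$-cycle systems of order $2v+1$. Distinct primes $q$ yield distinct values of $v$, so infinitely many $v$ work, as claimed. I do not expect any real obstacle in this argument: the genuine difficulty was already absorbed into Theorem~\ref{orthogonalcycles}(v) (ultimately into the cyclotomic estimate of Lemma~\ref{BP} feeding the induction of Theorem~\ref{asymptotic}); the only step here that calls for a moment's care is tying the parity of $w$ to the congruence class of $q$ modulo $4k$.
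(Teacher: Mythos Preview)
Your proposal is correct and follows exactly the approach indicated in the paper, which simply notes that the theorem is a consequence of Theorem~\ref{orthogonalcycles}(v) together with Dirichlet's theorem on primes in arithmetic progressions. You have merely made explicit the elementary bookkeeping (choice of the residue class $2k+1\pmod{4k}$, the coprimality check, and the cutoff $q>8k^5r$) that the paper leaves to the reader.
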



\section{Conclusions}

 We have established that there is an equivalence between a set of $r$ mutually orthogonal Heffter systems and
 a resolvable partial linear space of degree $r$ that we have called a Heffter space.

 We have determined various constructions for Heffter spaces of block size odd allowing us to claim
 that for any $r>0$ and any odd $k\geq3$, there are infinitely many values of $v$ for which there exists
a set of $r$ mutually orthogonal Heffter systems of order $v$ and block size $k$.

At this moment, reaching the analogous result for $k$ even appears to us quite difficult.
Anyway we are preparing a new paper \cite{BP3} with several constructions for Heffter spaces
with block size even and (an unfortunately) small degree $r$.

Several other open problems arise from this research.
In our opinion, the most intriguing is the one presented in Section 2.

\smallskip
{\bf Open problem.}\quad Does there exist a Heffter linear space?



\section*{Acknowledgements}
The authors are partially supported by INdAM - GNSAGA.

\end{document}